\newtheorem{theorem}{Theorem}[section]
\newtheorem{lemma}[theorem]{Lemma}
\newtheorem{prop}[theorem]{Proposition}
\newtheorem{coro}[theorem]{Corollary}
\theoremstyle{remark}
\newtheorem{remark}[theorem]{Remark}
\newtheorem{definition}[theorem]{Definition}
\newcommand{\Gal}{\mathrm{G}}
\newcommand{\Aut}{\mathrm{Aut}}
\newcommand{\Hom}{\mathrm{Hom}}
\newcommand{\sgn}{\mathrm{sgn}}
\newcommand{\F}{\mathbb{F}}
\newcommand{\fra}{\mathfrak{a}}
\newcommand{\frb}{\mathfrak{b}}
\newcommand{\fre}{\mathfrak{e}}
\newcommand{\frf}{\mathfrak{f}}
\renewcommand{\frm}{\mathfrak{m}}
\newcommand{\NN}{\mathbb{N}}
\newcommand{\Ce}{{\mathbb{C}}}
\newcommand{\Qp}{{\mathbb{Q}_p}}
\newcommand{\Ze}{\mathbb{Z}}
\newcommand{\Zl}{{\mathbb{Z}_\ell}}
\newcommand{\Zp}{\mathbb{Z}_p}
\newcommand{\sseq}{\subseteq}
\newcommand{\Fp}{\mathbb{F}_p}
\newcommand{\Fq}{\mathbb{F}_q}
\newcommand{\Finfty}{\mathbb{F}_\infty}
\newcommand{\frp}{\mathfrak{p}}
\newcommand{\lra}{\longrightarrow}
\newcommand{\ra}{\rightarrow}
\newcommand{\Pic}{\mathrm{Pic}}
\renewcommand{\div}{\mathrm{div}}
\newcommand{\ZG}{\Ze[G]}
\newcommand{\ZlG}{\Zl[G]}
\newcommand{\Gl}{\mathrm{GL}}
\newcommand{\calK}{\mathcal{K}}
\newcommand{\calL}{\mathcal{L}}
\newcommand{\calO}{\mathcal{O}}
\newcommand{\tensor}{\otimes}
\newcommand{\Fitt}{\mathrm{Fitt}}
\newcommand{\logIw}{\log_\mathrm{Iw}}
\newcommand{\AIw}{A_\mathrm{Iw}}
\newcommand{\nt}{\sseq}
\newcommand{\MSl}{M_S^{(n)}}
\newcommand{\Tpl}{T_p^{(n)}}
\newcommand{\TpMSl}{T_p\left(M_S^{(n)}\right)}
\newcommand{\Nabla}{\bigtriangledown}
\keywords{Equivariant Iwasawa theory, function fields, Drinfeld modules, $L$--functions, Picard $1$--motives, Ritter-Weiss modules, Tate sequences}
\subjclass[2020]{11G09, 11G20, 11R59, 11M38, 11R23, 11R37, 11R34}
\date{}
\begin{document}

\title[Geometric Main Conjectures in Function Fields]{Geometric main conjectures in function fields}

\author{Werner Bley} 

\author{Cristian D. Popescu$^\ast$}\thanks{$^\ast$Supported by a Simons Foundation collaboration grant}

\maketitle

\begin{abstract} We prove an Equivariant  Main Conjecture in Iwasawa Theory along any rank one, sign-normalized Drinfeld modular, split at $\infty$ Iwasawa tower of a general function field 
of characteristic $p$, for the Iwasawa modules recently considered by Greither and Popescu in \cite{GP12}, in their proof of the classical Equivariant Main Conjecture 
along the (arithmetic) cyclotomic Iwasawa tower. As a consequence, we prove an Equivariant Main Conjecture for a projective limit of certain Ritter--Weiss type modules, along the same Drinfeld modular Iwasawa towers.  This generalizes the results of Angl\`es et.al. \cite{Angles}, Bandini et al. \cite{Bandini}, and Coscelli \cite{Coscelli}, for the split at $\infty$ piece of the Iwasawa towers considered in loc.cit.,  and refines the results in \cite{GP12}.
\end{abstract}

%
\section{Introduction and Notations}\label{intro}

\subsection{Arithmetic Iwaswa Theory} In \cite{GP12}, Greither and the second author considered a set of data $(K/k, S, \Sigma)$ consisting of an abelian extension $K/k$ of global fields of characteristic $p>0$ of Galois group $G$ and two finite, nonempty, disjoint sets of places $S$ and $\Sigma$ in $k$, such that $S$ contains the ramification locus of $K/k$. From this data one can construct a Deligne--Picard $1$--motive $M_{S,\Sigma}$, which is naturally acted upon by the Galois group $G\times\mathbf{\Gamma}$, where $\Gamma:=G(\overline{\Bbb F_q}/\Bbb F_q)$ and $\Bbb F_q$ is the exact field of constants of $k$.  As a consequence, all the $\ell$--adic realizations $T_{\ell}(M_{S,\Sigma})$ are natural finitely generated modules over the profinite group--algebra $\Bbb Z_{\ell}[[G\times\Gamma]]$, for all prime numbers $\ell$. 

On the other hand, the set of data $(K/k,S,\Sigma)$ gives rise to a polynomial $\Theta_{S,\Sigma}(u)\in\Bbb Z[G][u]$, which is uniquely determined by the packet of ($S$--incomplete, $\Sigma$--smoothed) Artin $L$--functions $L_{S,\Sigma}(\chi, s)$, for all the complex valued characters $\chi$ of $G$,
via the equalities
\[
  \chi(\Theta_{S,\Sigma}(u))\mid_{u=q^{-s}}=L_{S, \Sigma}(\chi^{-1}, s),
\]
for all $s\in\Bbb C$.
The main result in \cite{GP12} is the following $G$--equivariant Iwasawa main conjecture, along the arithmetic (cyclotomic) Iwasawa tower $(K\otimes_{\Bbb F_q}\overline{\Bbb F_q})/K$, of Galois group $\Gamma\simeq \widehat{\Bbb Z}$, whose natural topologial generator is 
 the $q$--power Frobenius automorphism of $\overline{\Bbb F_q}$, denoted by $\gamma$. 

 \begin{theorem}[Greither--Popescu \cite{GP12}]\label{GP-main-intro}
   For $(K/k, S, \Sigma)$ as above and all primes $\ell$ we have
\begin{enumerate}
\item ${\rm pd}_{\Bbb Z_\ell[[G\times\Gamma]]}(T_\ell(M_{S,\Sigma}))=1.$
\item ${\rm Fitt}_{\Bbb Z_\ell[[G\times\Gamma]]}(T_\ell(M_{S,\Sigma}))=\langle \Theta_{S, \Sigma}(\gamma^{-1})\rangle.$
\end{enumerate}
\end{theorem}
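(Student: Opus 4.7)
The plan is to realize $T_\ell(M_{S,\Sigma})$ as the cokernel of a $\Zl[[G\times\Gamma]]$-linear endomorphism of a single finitely generated free module, whose determinant equals $\Theta_{S,\Sigma}(\gamma^{-1})$. Both assertions of the theorem then follow at once: a square presentation with injective boundary map gives a free resolution of length one (so $\mathrm{pd}\le 1$, with equality since $T_\ell(M_{S,\Sigma})$ is not projective over the Iwasawa algebra), and the first Fitting ideal of the cokernel of a square presentation matrix is principal, generated by the determinant.

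To construct such a presentation I would work geometrically. Let $Y$ be the smooth projective curve over $\F_q$ with function field $K$, set $\overline{Y}:= Y\times_{\F_q}\overline{\F_q}$, and write the Deligne--Picard $1$-motive concretely as $M_{S,\Sigma} = [\frD_S^0 \to J_\Sigma]$, where $\frD_S^0$ is the lattice of degree-zero divisors on $\overline{Y}$ supported above $S$, and $J_\Sigma$ is the generalized Jacobian of $\overline{Y}$ with modulus above $\Sigma$. For $\ell\ne p$ the $1$-motive exact sequence
\[
0 \longrightarrow T_\ell(J_\Sigma) \longrightarrow T_\ell(M_{S,\Sigma}) \longrightarrow \frD_S^0\otimes_{\ZZ}\Zl \longrightarrow 0
\]
identifies $T_\ell(J_\Sigma)$ with a suitably modified \'etale $H^1(\overline{Y},\Zl(1))$ (with local adjustments at primes above $S$ and $\Sigma$), on which the geometric Frobenius $\gamma$ acts with reverse characteristic polynomial equal, by Grothendieck's trace formula, to the packet of $L$-functions encoded by $\Theta_{S,\Sigma}(u)$. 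For $\ell = p$ I would substitute a crystalline or Anderson-motivic realization of $M_{S,\Sigma}$, in which the same Lefschetz-type identity holds for the appropriate Frobenius operator.

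To convert this characteristic polynomial identity into an actual square $\Zl[[G\times\Gamma]]$-presentation, I would tensor the tautological two-term resolution $0\to\Zl[[\Gamma]]\xrightarrow{\gamma-1}\Zl[[\Gamma]]\to\Zl\to 0$ with a $\Zl[G]$-equivariant free presentation of $T_\ell(M_{S,\Sigma})$, so that $T_\ell(M_{S,\Sigma})$ appears as the cokernel of the operator $\gamma\otimes 1 - 1\otimes F$, where $F$ denotes the geometric Frobenius action. This is a square endomorphism of a free $\Zl[[G\times\Gamma]]$-module whose determinant equals, up to the unit $\gamma^{-n}$, the element $\Theta_{S,\Sigma}(\gamma^{-1})$.

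The main obstacle I anticipate is the case $\ell\mid |G|$, where $\Zl[G]$ fails to be regular and semisimple methods do not apply. The construction must then be carried out integrally and $G$-equivariantly, which demands (i) a careful equivariant Lefschetz trace calculation reconciling the local $L$-factors at ramified primes in $S$ and the $\Sigma$-smoothing with the cohomological presentation, and (ii) a verification of injectivity of the presentation endomorphism over $\Zl[[G\times\Gamma]]$, not merely after inverting $\ell$. The latter should reduce to the $\Zl$-freeness of $T_\ell(M_{S,\Sigma})$ together with the non-zerodivisor property of $\Theta_{S,\Sigma}(\gamma^{-1})$ in the Iwasawa algebra; the former I would organize via a Ritter--Weiss-style refinement of the Tate sequence machinery.
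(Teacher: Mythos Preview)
This theorem is not proved in the present paper; it is quoted from \cite{GP12} and restated as Theorem~\ref{GP Th.4.3}. Nevertheless, \S3.2 recalls enough of the argument from \cite{GP12} (see the construction around \eqref{Lambda-ell-chi-ses} and the proof of Corollary~\ref{non-zero divisors}(3)) to permit a comparison.

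Your overall strategy---produce a square presentation of $T_\ell(M_{S,\Sigma})$ over $\Zl[[G\times\Gamma]]$ via the operator $1-\gamma^{-1}A$ and identify its determinant with $\Theta_{S,\Sigma}(\gamma^{-1})$---is indeed the strategy of \cite{GP12}. But your handling of the main obstacle ($\ell\mid|G|$) does not match, and that is where the substance lies.

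The argument in \cite{GP12} does not proceed by a Ritter--Weiss refinement of a Lefschetz trace. Instead one first proves directly that $T_\ell(M_{S,\Sigma})$ is $\Zl[G]$-\emph{projective} (this is part~(1) of Theorem~\ref{GP Th.4.3}; the proof uses descent along the $G$-cover of curves, \cite[\S3]{GP12}). With projectivity in hand, one writes $G=\Delta\times P$ with $P$ the $\ell$-Sylow subgroup, decomposes along characters $\chi\in\widehat\Delta(\Ql)$, and uses that over the local ring $\Zl(\chi)[P]$ projective equals free. A basis of $T_\ell^\chi$ then gives the matrix $A_\gamma^\chi$ of the $\gamma$-action, and the presentation is literally $1-\gamma^{-1}A_\gamma^\chi$ on $(\Zl(\chi)[P][[\Gamma]])^{m_\chi}$. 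No auxiliary free resolution of $T_\ell$ and no tensoring with the tautological $\Gamma$-resolution is needed. The determinant identity is then an explicit calculation (\cite[Prop.~4.8, 4.10]{GP12}) factoring $\Theta_{S,\Sigma}^\chi(u)$ as $P^\chi(u)Q^\chi(u)$ with $P^\chi(u)=\det(1-\gamma u\mid T_\ell^\chi)$ and $Q^\chi(\gamma^{-1})$ a unit in $\Zl(\chi)[P][[\Gamma]]$.

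Your proposal to ``tensor the tautological two-term resolution \dots\ with a $\Zl[G]$-equivariant free presentation of $T_\ell(M_{S,\Sigma})$'' would in general yield a double complex whose totalization has length greater than one; it collapses to a length-one free resolution only if $T_\ell(M_{S,\Sigma})$ is already $\Zl[G]$-projective, which is precisely the input you have not supplied. The crystalline or Anderson-motivic detour you suggest for $\ell=p$ is likewise unnecessary: in \cite{GP12} the $p$-adic Tate module of a $1$-motive is defined directly and treated uniformly with the $\ell\ne p$ case.
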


In the statement above, ${\rm pd}_R(M)$ and ${\rm Fitt}_R(M)$ denote the projective dimension,
respectively the $0$--th Fitting ideal of a finitely presented module $M$ over a commutative, unital ring $R$.
See \S4 in \cite{GP12} for the relevant definitions and properties of Fitting ideals.

\subsection{Geometric Iwasawa Theory} The main goal of this paper is to prove analogues of Theorem \ref{GP-main-intro}
above along geometric Iwasawa--towers $K_\infty/K$, which are highly ramified and obtained from $K$ essentially by adjoining  the
$\frak p^n$--torsion points of a sign--normalized, rank one Drinfeld module (a Hayes module),
for some place $\frak p$ in $k$ and all $n\in\Bbb Z_{\geq 1}$.

This geometric Iwasawa theoretic approach was first considered in \cite{Angles},
in the particular case where $K=k=\Bbb F_q(t)$ and $K_\infty=\cup_{n\geq 0}  K_n$ with $K_n$ obtained by adjoining the
$\frak p^{n+1}$--torsion $\mathcal C[\frak p^{n+1}]$ of the Carlitz module 
\[
  \mathcal C: \Bbb F_q[t]\to \Bbb F_q[t]\{\tau\}, \qquad \mathcal C(t)=t+\tau,
\]
for a maximal ideal $\frak p$ in $\Bbb F_q[t]$. The fields $K_n$ are the ray--class fields of $k$ of
conductors $(\frak p^{n+1}v_\infty)$, where $v_\infty$ is the valuation of $k$ of uniformizer $1/t$. While using
Theorem \ref{GP-main-intro} above and the techniques and results developed in \cite{GP12}, the authors of \cite{Angles} are
studying the more classical Iwasawa
$\Bbb Z_p[[G(K_\infty/k)]]$--module
$$\frak X_p^{(\infty)}:=\varprojlim_n\, ({\rm Pic}^0(K_n)\otimes\Bbb Z_p),$$
where the projective limit is taken with respect to the usual norm maps at the level of the Picard groups of the function fields $K_n$.
One has topological group isomorphisms
$$G(K_\infty/K)\simeq \Bbb F_{\frak p}^\times\times U_{\frak p}^{(1)}\simeq \Bbb F_{\frak p}^\times\times \Bbb Z_p^{\aleph_0},$$
where $\Bbb F_{\frak p}$ is the residue field of $\frak p$, $U_{\frak p}^{(1)}$ is the group of principal units in the
completion of $k$ at $\frak p$ and $\Bbb Z_p^{\aleph_0}$ denotes a product of countably many copies of $\Zp$.
The main Iwasawa theoretic result in \cite{Angles} gives the $0$--th Fitting ideal of $\frak X_p^\infty$, away from the trivial
character of $\Bbb F_{\frak p}^\times$, in terms of an element
$\Theta^{\infty, \sharp}_{S, \Sigma}\in \Bbb Z_p[[G(K_\infty/K)]]$, which should be viewed as the $\Bbb Z_p[[G(K_\infty/k)]]$--analogue of the special value $\Theta_{S, \Sigma}(1)\in\Bbb Z[G(K/k)]$ of the element $\Theta_{S,\Sigma}(u)$ described above.
The work in \cite{Angles} was further developed in \cite{Bandini} and \cite{Coscelli}, see Remark \ref{remark-Bandini-Coscelli}.
\\

As opposed to \cite{Angles}, the set-up of this paper is the following. We fix an arbitrary function field $k$ of exact field of constants $\Bbb F_q$  and a place $v_\infty$ of $k$, called the infinite place of $k$ from now on. We let $A$ denote the Dedekind domain consisting of those elements in $k$ which are integral at all places of $k$, except for $v_\infty$. Further, we fix an ideal $\frak f$ and a maximal ideal $\frak p$ of $A$, such that $\frak p\nmid\frak f$. The geometric extensions of $k$ of interest to us are the fields
$$L_n:=H_{\frak f\frak p^{n+1}}, \text{ for all }n\geq 0,$$
which are the ray--class fields of $k$ of conductors $\frak f\frak p^{n+1}$ in which $v_\infty$ splits completely (i.e. the {\it{real}} ray--class fields of conductors $\frak f\frak p^{n+1}$.) As proved by Hayes in \cite{Hay85}, the extension $L_n/L_0$ is essentially generated by the $\frak p^{n+1}$--torsion points of a certain type of rank 1, sign-normalized Drinfeld module defined on $A$. (See Section~\ref{Drinfeld modules and cft} for details.) The ensuing geometric Iwasawa tower $L_\infty/k$, with $L_\infty=\cup_nL_n$, has Galois group $G_\infty$ which sits in an exact sequence
$$0\to G(L_\infty/L_0)\to G_\infty\to G(L_0/k)\to 0,$$
where $G(L_\infty/L_0)\simeq \Bbb Z_p^{\aleph_0}$  and $G(L_0/k)$ finite. Since the ramification locus of $L_\infty/k$ is finite, namely $S:=\{\frak p\}\cup\{v\,\vert\, v \text{ prime in } A, v\vert\frak f\}$, one can construct the following element 
$$\Theta_{S, \Sigma}^{(\infty)}(u):=\varprojlim_n\Theta_{S,\Sigma}^{(n)}(u)\in\Bbb Z_p[[G_\infty]][[u]],$$
out of the polynomials $\Theta_{S,\Sigma}^{(n)}(u)\in\Bbb Z[G(L_n/k)][u]$ associated in \cite[\S4.2]{GP12} to the data $(L_n/k,S, \Sigma)$,
for any finite, non-empty set  $\Sigma$ of primes in $k$, disjoint from $S$. On the other hand,
to the set of data $(L_\infty/k, S, \Sigma)$ one can associate the following $\Zp[[G_\infty\times\Gamma]]$--module
$$T_p(M_{S,\Sigma}^{(\infty)}):=\varprojlim_n T_p(M_{S,\Sigma}^{(n)}),$$
where $M_{S,\Sigma}^{(n)}$ is the Picard $1$--motive for $(L_n/k, S, \Sigma)$, and $T_p(M_{S,\Sigma}^{(n)})$ is its $p$--adic Tate module, as defined in
\cite[ \S2]{GP12}. The projective limit is taken with respect to certain canonical norm maps, described in detail in \S3 below. It turns out that neither $T_p(M_{S, \Sigma}^{(n)})$ nor 
$T_p(M_{S, \Sigma}^{(\infty)})$ depend on $\Sigma$, reason for which we will drop $\Sigma$ from those notations.
In \S3.2, we prove  the following geometric--arithmetic analogue of Theorem \ref{GP-main-intro} above.
\begin{theorem}\label{EMC-I-intro}
  For any finite, non-empty set  $\Sigma$ of primes in $k$, disjoint from $S$, \\
  the $\Bbb Z_p[[G_\infty\times\Gamma]]$--module $T_p(M_{S}^{(\infty)})$ is finitely generated, torsion and
\begin{enumerate}
\item  ${\rm pd}_{\Zp[[G_\infty\times\Gamma]]}(T_p(M^{(\infty)}_{S}))=1.$
\item ${\rm Fitt}_{\Zp[[G_\infty\times\Gamma]]}(T_p(M^{(\infty)}_{S}))=\langle \Theta_{S,\Sigma}^{(\infty)}(\gamma^{-1})\rangle.$
\end{enumerate}
\end{theorem}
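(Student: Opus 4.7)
My plan is to pass to the inverse limit over $n$ of Theorem \ref{GP-main-intro} applied at each finite layer $L_n/k$. For every $n\geq 0$ the data $(L_n/k, S, \Sigma)$ satisfies the hypotheses of \cite{GP12}: the extension $L_n/k$ is abelian, its ramification locus is contained in $S = \{\frak p\}\cup\{v\mid\frak f\}$, and $\Sigma\cap S = \emptyset$. Applying Theorem \ref{GP-main-intro} at level $n$ therefore yields
\[
{\rm pd}_{\Zp[G(L_n/k)\times\Gamma]}(T_p(M_S^{(n)})) = 1, \qquad \Fitt_{\Zp[G(L_n/k)\times\Gamma]}(T_p(M_S^{(n)})) = \langle \Theta_{S,\Sigma}^{(n)}(\gamma^{-1})\rangle.
\]
Since $T_p(M_S^{(n)})$ is $\Sigma$-independent (as noted in the excerpt), the generator on the right is automatically $\Sigma$-independent up to units.

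The first key step will be the construction of a \emph{compatible} system of length-$1$ projective resolutions at each finite level. I would pick a finite-rank free module $P^0$ over $\Zp[[G_\infty\times\Gamma]]$ surjecting onto $T_p(M_S^{(\infty)})$; the norm-coherent transition maps from \S3 are surjective and the required rank can be read off from any single finite level by topological Nakayama. Setting $P_n^0 := P^0 \otimes_{\Zp[[G_\infty\times\Gamma]]} \Zp[G(L_n/k)\times\Gamma]$ and $P_n^1 := \ker(P_n^0 \twoheadrightarrow T_p(M_S^{(n)}))$, the finite-level bound ${\rm pd}=1$ forces each $P_n^1$ to be projective. Since each $P_n^0$ is free of finite rank over its Noetherian quotient, the inverse system $(P_n^1)_n$ satisfies the Mittag--Leffler condition, and the projective limit of the finite-level short exact sequences stays exact:
\[
0 \longrightarrow P^1 \longrightarrow P^0 \longrightarrow T_p(M_S^{(\infty)}) \longrightarrow 0,
\]
with $P^1 = \varprojlim_n P_n^1$ projective over $\Zp[[G_\infty\times\Gamma]]$. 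This establishes (1) and exhibits $T_p(M_S^{(\infty)})$ as finitely presented; torsionness follows because each $T_p(M_S^{(n)})$ is annihilated by $\Theta_{S,\Sigma}^{(n)}(\gamma^{-1})$, hence the limit is annihilated by $\Theta_{S,\Sigma}^{(\infty)}(\gamma^{-1})$, which is a non-zero-divisor by character-theoretic non-vanishing.

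For (2), I would compute $\Fitt(T_p(M_S^{(\infty)}))$ from the above resolution via the determinant construction: since $P^0$ and $P^1$ are projective of equal rank (forced by torsionness), the Fitting ideal is generated by $\det(P^1 \hookrightarrow P^0)$. By the compatible construction, this determinant reduces, modulo the kernel of $\Zp[[G_\infty\times\Gamma]] \twoheadrightarrow \Zp[G(L_n/k)\times\Gamma]$, to the finite-level determinant, which generates $\langle \Theta_{S,\Sigma}^{(n)}(\gamma^{-1})\rangle$ by Theorem \ref{GP-main-intro}(2). Since the polynomials $\Theta_{S,\Sigma}^{(n)}(u)$ are norm-compatible by their definition and their inverse limit is $\Theta_{S,\Sigma}^{(\infty)}(u)$, the two generators must agree up to a unit.

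The main obstacle will be the non-Noetherian nature of $\Zp[[G_\infty\times\Gamma]]$, arising from $G(L_\infty/L_0) \simeq \Zp^{\aleph_0}$. This precludes several standard Iwasawa-theoretic shortcuts and forces direct verification that finite generation, finite presentation, and the Fitting/projective-dimension identities propagate cleanly through the inverse limit. A secondary subtlety is the explicit construction of the compatible resolution itself: the projective resolutions of \cite{GP12} at finite level arise from Tate-style sequences attached to the étale cohomology of the Picard $1$-motive $M_S^{(n)}$, so I would need to verify that the canonical morphisms $M_S^{(n+1)} \to M_S^{(n)}$ from \S3 induce compatible transitions between these Tate sequences.
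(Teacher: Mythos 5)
Your overall strategy — pass to the inverse limit of the finite-level Greither–Popescu resolutions — is the paper's strategy too, and you correctly identify both the peripheral facts (surjectivity of the norm transition maps, $\Sigma$-independence, the non-zero-divisor property of $\Theta_{S,\Sigma}^{(\infty)}(\gamma^{-1})$, Mittag–Leffler exactness of the limit) and the main obstacle (non-Noetherianity of $\Zp[[G_\infty\times\Gamma]]$). But there is a genuine gap precisely at the point you flag as a ``secondary subtlety'': you never establish that $P^1 := \varprojlim_n P_n^1$ is projective over $\Zp[[G_\infty\times\Gamma]]$. Each $P_n^1$ is indeed projective (hence free, after decomposing by characters of the prime-to-$p$ part $\Delta$, since $\Zp(\chi)[P_n][[\Gamma]]$ is local), but an inverse limit of free modules of fixed rank is free only when the transition maps are the canonical surjections in compatible bases. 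In your construction the transition maps $P_{n+1}^1\to P_n^1$ are merely the restrictions of $P_{n+1}^0\to P_n^0$ to the kernels, and a snake-lemma computation shows $\mathrm{coker}(P_{n+1}^1\to P_n^1)$ is a quotient of $\ker\bigl(T_p(M_S^{(n+1)})\twoheadrightarrow T_p(M_S^{(n)})\bigr)$, which is generally non-trivial. So the limit $P^1$ need not be projective, and without projectivity your determinant computation of the Fitting ideal also fails. Asserting that pd $=1$ at the limit is exactly what needs proof, so you cannot invoke it to conclude $P^1$ projective.

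The paper closes this gap with a more hands-on construction. After splitting $\Zp[G_n]\simeq\bigoplus_{\chi\in\widehat\Delta}\Zp(\chi)[P_n]$, it uses Theorem~\ref{GP Th.4.3}(1) plus the invariance result~\eqref{Tp invariants} to show $T_p(M_S^{(n)})^\chi$ is \emph{free} of rank $m_\chi$ over the local ring $R_n^\chi=\Zp(\chi)[P_n]$ with $m_\chi$ independent of $n$ (Lemma~\ref{indep of l}). It then builds, inductively in $n$, bases $x_1^{(n)},\dots,x_{m_\chi}^{(n)}$ of $T_n^\chi$ that are norm-compatible; the inductive step uses surjectivity of the norm map together with a lemma on lifting invertible matrices through local ring morphisms, and yields length-one free resolutions
$0\to R_n^\chi[[\Gamma]]^{m_\chi}\xrightarrow{1-\gamma^{-1}A_\gamma^{(n),\chi}} R_n^\chi[[\Gamma]]^{m_\chi}\to T_n^\chi\to 0$
in which \emph{both} the free modules and the matrices $A_\gamma^{(n),\chi}$ transition via the canonical reduction $R_{n+1}^\chi\twoheadrightarrow R_n^\chi$ (equation~\eqref{coeffientwise}). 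This makes the limit of the left two terms visibly the free module $\Lambda^{\chi,m_\chi}$, so the limiting resolution is free of the correct ranks, giving pd $\le1$, finite generation, and the determinant identity for the Fitting ideal simultaneously. The rank-invariance lemma, the basis-lifting argument, and the matrix-compatibility check are the technical heart of the proof, and your sketch does not yet supply them.
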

In order to obtain a geometric (along the tower $L_\infty/k$) Iwasawa main conjecture--type result, one has to take $\Gamma$--coninvariants. In \S3.3 we establish a $\Bbb Z_p[[G_\infty]]$--module isomorphism
$$T_p(M_S^{(\infty)})_{\Gamma}\simeq \Nabla_S^{(\infty)},$$
where $\Nabla_S^{(\infty)}$ is an arithmetically meaningful $\Bbb Z_p[[G_\infty]]$--module, a projective limit of Ritter--Weiss type modules $\nabla_S^{(n)}$ which are extensions of divisor groups by class groups. (See the Appendix. Also, see \cite{Ritter-Weiss} for the number field analogues of $\Nabla_S^{(n)}$.) We prove the following.
\begin{theorem}\label{EMC-II-intro}
  For any finite, non-empty set  $\Sigma$ of primes in $k$, disjoint from $S$, \\
the $\Bbb Z_p[[G_\infty]]$--module $\Nabla_S^{(\infty)}$ is finitely generated, torsion and
\begin{enumerate}
\item  ${\rm pd}_{\Bbb Z_p[[G_\infty]]}(\Nabla_S^{(\infty)})=1.$
\item ${\rm Fitt}_{\Zp[[G_\infty]]}(\Nabla_S^{(\infty)})=\langle \Theta_{S,\Sigma}^{(\infty)}(1)\rangle.$
\end{enumerate}
\end{theorem}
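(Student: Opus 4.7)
The plan is to derive Theorem~\ref{EMC-II-intro} from Theorem~\ref{EMC-I-intro} by taking $\Gamma$-coinvariants. Set $R:=\Zp[[G_\infty\times\Gamma]]$ and $R_1:=\Zp[[G_\infty]]=R/(\gamma-1)R$, and write $T:=T_p(M_S^{(\infty)})$. The key input is the $R_1$-module isomorphism $T_\Gamma\simeq \Nabla_S^{(\infty)}$ established in \S3.3, which reduces the proof to computing the Fitting ideal and projective dimension of $T_\Gamma$ over $R_1$.

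By Theorem~\ref{EMC-I-intro}, $T$ is finitely generated, torsion, and of projective dimension one over the commutative ring $R$, so a standard argument upgrades a minimal projective resolution to a square-matrix presentation
$$0\to R^m\stackrel{\Phi}{\longrightarrow} R^m\to T\to 0,$$
with $\det(\Phi)$ generating $\langle\Theta_{S,\Sigma}^{(\infty)}(\gamma^{-1})\rangle$ (the equality of ranks comes from $T$ being torsion). Applying $-\otimes_R R_1$, equivalently specializing $\gamma\mapsto 1$, produces the four-term exact sequence
$$0\to {\rm Tor}_1^R(T,R_1)\to R_1^m\stackrel{\bar\Phi}{\longrightarrow} R_1^m\to T_\Gamma\to 0,$$
with $\det(\bar\Phi)$ generating $\langle\Theta_{S,\Sigma}^{(\infty)}(1)\rangle$ in $R_1$, since the augmentation $\gamma\mapsto 1$ sends $\Theta_{S,\Sigma}^{(\infty)}(\gamma^{-1})$ to $\Theta_{S,\Sigma}^{(\infty)}(1)$.

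The main obstacle will be showing that ${\rm Tor}_1^R(T,R_1)=T^\Gamma$ vanishes, equivalently that $\gamma-1$ acts injectively on $T$, equivalently that $\det(\bar\Phi)$ is a non-zero-divisor in $R_1$. I would attack this by reducing to the finite layers: writing $T^\Gamma=\varprojlim_n T_p(M_S^{(n)})^\Gamma$ under the norm-compatible structure, and invoking at each level the explicit description of $T_p(M_S^{(n)})$ in \cite[\S2]{GP12} together with Theorem~\ref{GP-main-intro}. Either directly from Weil-number bounds on Frobenius eigenvalues for Picard $1$-motives (which preclude the eigenvalue $1$), or equivalently from the non-vanishing of the $\Sigma$-smoothed $L$-values at $s=0$ encoded in $\Theta_{S,\Sigma}^{(n)}(1)$—a non-vanishing guaranteed precisely by the non-emptiness of $\Sigma$, which kills the potential exceptional zero at the trivial character—one obtains $T_p(M_S^{(n)})^\Gamma=0$ at every $n$, hence $T^\Gamma=0$. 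Once this vanishing is in hand, the four-term sequence above collapses to a square free resolution
$$0\to R_1^m\stackrel{\bar\Phi}{\longrightarrow} R_1^m\to \Nabla_S^{(\infty)}\to 0,$$
which simultaneously yields ${\rm pd}_{R_1}(\Nabla_S^{(\infty)})\leq 1$ and ${\rm Fitt}_{R_1}(\Nabla_S^{(\infty)})=\langle\Theta_{S,\Sigma}^{(\infty)}(1)\rangle$; the projective dimension is exactly one in the non-trivial case, since $\Nabla_S^{(\infty)}$ is then torsion and non-zero, hence non-projective.
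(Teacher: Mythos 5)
Your overall strategy -- derive EMC--II from EMC--I by passing to $\Gamma$--coinvariants in a square presentation of $T:=T_p(M_S^{(\infty)})$, then show the reduced presentation map is injective -- matches the paper's approach. However, the step on which everything hinges is handled incorrectly, and the way you propose to fill the gap would not work.

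You reduce the problem to showing $T^\Gamma=0$, and you propose to do so by showing $T_p(M_S^{(n)})^\Gamma=0$ at \emph{every} finite layer $n$, arguing via Weil--number bounds (``eigenvalue $1$ excluded'') or via non-vanishing of $\Theta_{S,\Sigma}^{(n)}(1)$ due to $\Sigma\ne\emptyset$. This is false. The paper's Proposition~\ref{smallS-proposition}(1) (the $p$--adic Ritter--Weiss/Tate sequence at each finite layer) gives a \emph{canonical isomorphism} $T_p(M_S^{(n)})^\Gamma\simeq U_S^{(n)}\otimes\Zp$, the $p$--completed $S$--units in $L_n$, which is a nonzero finitely generated $\Zp$--module of positive rank. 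Correspondingly, $\gamma$ does have the eigenvalue $1$ on $T_p(M_S^{(n)})$, and $\Theta_{S,\Sigma}^{(n)}(1)$ is typically a zero--divisor in $\Zp[G_n]$: the non-emptiness of $\Sigma$ removes the pole at the trivial character, but it does nothing about the zeros at $s=0$ forced by the vanishing-order formula (Lemma~\ref{order of vanishing}) for characters $\chi$ that are trivial on the decomposition group $G_v$ of some $v\in S$. Your ``$\Sigma$ kills the exceptional zero'' heuristic thus fails to deliver the injectivity you need at any finite level.

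What is true -- and what the paper's argument implicitly establishes -- is that the \emph{projective limit} $T^\Gamma = \varprojlim_n U_S^{(n)}\otimes\Zp = U_S^{(\infty)}$ vanishes, even though every term in the tower is nonzero. The paper does not try to prove this directly: it instead records the four-term sequence \eqref{infty Tate seq} and derives the Fitting ideal in part (2) purely from right-exactness of base change $\gamma\mapsto 1$ (as in Corollary~\ref{Fitt-Nabla-ell}); for part (1), Proposition~\ref{nabla torsion prop} applies the snake lemma to the presentation of $T$ and its reduction mod $(\gamma-1)$, obtains a right-exact square presentation of $\Nabla_S^{(\infty)}$, and then concludes injectivity a posteriori from the statement that $\Theta_{S,\Sigma}^{(\infty)}(1)$ is a non-zero-divisor in $\Zp[[G_\infty]]$. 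That non-zero-divisor statement (Proposition~\ref{not a zero divisor prop}) is the genuine content, and it requires ingredients you do not invoke: the structure theorem $\Zp[[G_\infty]]\simeq\Zp[\widetilde{G_0}][[\widetilde\Gamma_\infty]]\hookrightarrow\bigoplus_\rho\Zp(\rho)[[\widetilde\Gamma_\infty]]$ (Proposition~\ref{G-infty-algebra-prop}, based on Kiehlmann's pro-$p$ structure theorem), the order-of-vanishing formula for Artin $L$--functions at $s=0$, a detailed computation of the decomposition groups $G_v(L_\infty/L_n)$ for $v\mid\frak f$ (Proposition~\ref{decomposition groups prop}), and the function-field analogue of Leopoldt's conjecture to guarantee $\Zp$--linear independence of the $x_v$. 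Replacing all of that with a level-by-level vanishing claim is the gap in your argument.
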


To relate our results to those in \cite{Angles, Bandini,Coscelli} we have to introduce some further notation.
  We let $\Delta$ denote the maximal subgroup of $G(L_0/k)$ whose order is not divisible by $p$. Then we have a canonical isomorphism
  $G_\infty \simeq \Delta \times G_\infty^{(p)}$ where $G_\infty^{(p)}$ is the maximal pro-$p$ subgroup of $G_\infty$. We view the idempotent
  $e_\Delta := \frac{1}{|\Delta|} \sum_{\delta \in \Delta}\delta$ as an element of $\Zp[[G_\infty]]$ and consider the exact functor
  $M \mapsto M^\sharp := (1 - e_\Delta)M$ from the category of $\Zp[[G_\infty]]$-modules to the category of modules over the
  quotient ring $\Zp[[G_\infty]]^\sharp:=(1-e_{\Delta})\Bbb Z_p[[G_\infty]]$.

  Further, if $\frak f$ is the unit ideal $\frak e$ and the prime $\frak p$ stays inert in the real Hilbert class field $H_{\frak e}$ over $k$,
  then for $S = \{\frp\}$
  one has an isomorphism of $\Bbb Z_p[[G_\infty]]^\sharp$--modules (see \S3.3 below)
  \[
    \Nabla_S^{(\infty), \sharp}\simeq \frak X_{p}^{(\infty), \sharp}.
  \]
  Since these additional hypotheses are obviously satisfied when $k=\Bbb F_q(t)$ (as $H_{\frak e}=k$ in that case),
  Theorem \ref{EMC-II-intro} above implies  the main 
  Iwasawa theoretic result in \cite{Angles} discussed above, for the real Carlitz tower, i.e the maximal subfield of
  $K_\infty$ where $v_\infty$ splits completely. (See Theorem \ref{limit theorem 3 sharp}.)
Under slightly stronger hypotheses, we obtain an isomorphism of $\Bbb Z_p[[G_\infty]]$--modules
$\Nabla_S^{(\infty)}\simeq \frak X_{p}^{(\infty)}$
which leads to a full description of ${\rm Fitt}_{\Bbb Z_p[[G_\infty]]}(\frak X_{ p}^{(\infty)})$. (See Theorem \ref{limit theorem 3}.)
For an even more detailed comparison of our results with those in \cite{Angles, Bandini,Coscelli} we refer the reader to Remark~\ref{remark-Bandini-Coscelli} below.
\smallskip

In order to establish the link between the modules $\Nabla_S^{(\infty)}$ and $\frak X_{p}^{(\infty)}$ mentioned above,
we needed to provide slight generalizations of the results in 
\cite{GPff} on Ritter-Weiss modules and Tate sequences for function fields. This is done in the Appendix.\\

\begin{remark} 
Unlike in classical Iwasawa theory, all Iwasawa algebras $\Bbb Z_p[[G_\infty\times\Gamma]]$, $\Bbb Z_p[[\Gamma]]$ and $\Bbb Z_p[[G_\infty]]$
relevant in this context are not Noetherian. In particular, one has an isomorphism
$$\Bbb Z_p[[G_\infty]]\simeq \Bbb Z_p[t(G_\infty)][[X_1, X_2, \dots]]$$
of topological rings, where the power series ring has countably many variables and $t(G_\infty)$ is the (finite) torsion subgroup of $G_\infty$. Throughout, if $R$ is a commutative ring, the ring of power series in countably many variables with coefficients in $R$ is defined by
$$R[[X_1, X_2, \dots]]:=\varprojlim_nR[[X_1, X_2, \dots, X_n]],$$
where the transition maps $R[[X_1, X_2, \dots, X_{n+1}]]\to R[[X_1, X_2, \dots, X_{n}]]$ are the $R$--algebra morphisms sending $X_i\mapsto X_i$, for all $i\leq n$, and $X_{n+1}\mapsto 0$. 
\end{remark}

\section{Class--field theory and geometric Iwasawa towers}

\subsection{Sign normalized Drinfeld modules and class field theory}\label{Drinfeld modules and cft}

This subsection follows the exposition of Hayes \cite{Hay85}. In particular, we recall without proof the results of \S 4 of loc.cit.

Let $k$ be a global function field and let $\Fq$ be its field of constants. For a place (discrete, rank $1$ valuation) $v$ of $k$, we let $k_v$ denote the completion of $k$ in the $v$--adic topology. We let $\mathcal O_{k_v}$, $\frm_{k_v}$, $U_{k_v}$ be the ring of integers in $k_v$, its maximal ideal, and its group of units, respectively. As usual, we let $U_{k_v}^{(n)}:=1+\frm_{k_v}^n$, for all $n\geq 1$.
We denote by $d_v$ the degree of $v$ relative to $\Bbb F_q$ and by $\Bbb F_v$ its residue field.
By definition, we have $\Bbb F_v=\Bbb F_{q^{d_v}}$. 
Further, we fix a uniformiser $\pi_v\in k$, for all $v$ as above. For every $v$, we have group isomorphisms 
$$k_v^\times\simeq \pi_v^{\Bbb Z}\times U_{k_v}, \qquad U_{k_v}\simeq \Bbb F_v^\times\times U_{k_v}^{(1)}.$$

Now, we fix once and for all a place $v_\infty$ of $k$ (called the place at infinity) and let $A$ be
the Dedekind ring of elements of $k$ that are integral outside $v_\infty$. (In \cite{Hay85} our $A$ is denoted by $A_\infty$.) Note that the places $v$ of $k$, which are different from $v_\infty$, are in one--to--one correspondence with the maximal ideals of $A$.
For such a maximal ideal $\frak p$, we denote by $v_{\frak p}$ the corresponding place of $k$, viewed as a rank one, discrete valuation of $k$, normalized so that $v_{\frak p}(k^\times)=\Bbb Z$.

In order to simplify notations, we let $k_\infty=k_{v_\infty}$, $\pi_\infty:=\pi_{v_\infty}$, $\Bbb F_\infty:=\Bbb F_{v_\infty}$, $d_\infty:=d_{v_\infty}$, etc. The same notation principle applies to the finite places $v_\frak p$, namely
$k_{\frak p}:=k_{v_{\frak p}}$, etc. Finally, for any place $v$, we let ${\rm ord}_v: k_v^\times\to \Bbb Z$ be its associated valuation, normalized so that ${\rm ord}_v(\pi_v)=1$. \\

Let $I_A$ denote the group of fractional ideals of $A$ and let $P_A$ be the subgroup of principal ideals. Then
$\Pic(A) = I_A / P_A$ and we write $h_A := |\Pic(A)|$ for the class number of $A$. We write $D_k$ for the group of divisors
of $k$ and $D_k^0$ for the subgroup of divisors of degree zero. Let $\div \colon k^\times \lra D_k^0$ be the
divisor map and set $\Pic^0(k) := D_k^0/\div(k^\times)$. We write $h_k := |\Pic^0(k)|$. Recall that we have an exact sequence
$$0\to {\rm Pic}^0(k)\to{\rm Pic}(A)\overset{\widehat{\rm deg}}\longrightarrow \Bbb Z/d_\infty\to 0,$$
where $\widehat{\rm deg}$ is the degree modulo $d_\infty$ map.  Consequently, we have an equality $h_A = h_kd_\infty$.

\begin{definition} As in \cite{Hay85}, we define the following.
\begin{enumerate}
\item A finite, Galois extension $K/k$ is called real (relative to $v_\infty$) if $v_\infty$ splits completely in $K/k$, or, equivalently, if there exists a $k$--embedding $K\hookrightarrow k_\infty$.
\item For an integral ideal $\frak m\subseteq A$, $H_{\frak m}$ denotes the real ray--class field of $k$ of conductor $\frak m$.
\item If $\frak m=\frak e:=A$ is the unit ideal, then we call $H_{\frak e}$ the real Hilbert class field of $k$.
\end{enumerate}
\end{definition}

Next, we give the id\`ele theoretic description of the class fields $H_\frm$, as in \cite{Hay85}. For that, let $J_k$ denote the group of id\`eles of $k$ and consider the following subgroups of $J_k$.
$$U(\frm):= k_\infty^\times\times\prod_{\frak p\mid \frak m}U_{\frak p}^{(v_{\frak p}(\frak m))}\times\prod_{\frak p\nmid\frak m\infty}U_{\frak p},\qquad J_{\frak m}:=k^\times\cdot U(\frak m).$$
The following is proved in \cite{Hay85}. 
\begin{prop}\label{cft-hm-prop} For all $\frak m$ as above, the Artin reciprocity map gives group isomorphisms
$$
J_k/J_{\frak m}\simeq G(H_{\frak m}/k), \qquad J_k/J_{\frak e}\simeq G(H_{\frak e}/H), \qquad 
J_{\frak e}/J_{\frak m}\simeq G(H_{\frak m}/H_{\frak e}).
$$
Further, if $\frak m\ne \frak e$, we have canonical group isomorphisms 
$$ J_k/J_{\frak e}\simeq {\rm Pic}(A), \qquad J_{\frak e}/J_{\frak m}\simeq (A/\frak m)^\times/\Bbb F_q^\times.$$
\end{prop}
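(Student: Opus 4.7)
The plan is to derive all five isomorphisms from global class field theory applied to the tower $k \subseteq H_\fre \subseteq H_\frm$, combined with an explicit analysis of local units and of the natural map from id\`eles to fractional ideals of $A$. I read the second displayed isomorphism as $J_k/J_\fre \simeq G(H_\fre/k)$, taking the ``$H$'' in the statement to be a typographical slip for ``$k$''.

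First, I would invoke the reciprocity isomorphism of global class field theory, under which finite abelian extensions $L/k$ correspond bijectively to the open finite-index subgroups $k^\times \cdot N_{L/k}(J_L)$ of $J_k$. The decisive local input is that a place $v$ splits completely in an abelian $L/k$ if and only if the whole local component $k_v^\times$ is contained in the associated norm subgroup (equivalently, the local Artin map at $v$ is trivial). Consequently $J_\frm = k^\times U(\frm)$, whose local component at $v_\infty$ is the full group $k_\infty^\times$ rather than merely $U_{k_\infty}$ or $U_{k_\infty}^{(1)}$, is precisely the norm subgroup corresponding to the largest abelian extension of $k$ of conductor dividing $\frm$ in which $v_\infty$ splits completely. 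This extension is by definition the real ray-class field $H_\frm$, yielding the isomorphism $J_k/J_\frm \simeq G(H_\frm/k)$.

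Next, since $U(\frm) \subseteq U(\fre)$ (the local conditions at primes dividing $\frm$ being strictly more restrictive), one has $J_\frm \subseteq J_\fre \subseteq J_k$. Specializing the reciprocity isomorphism to $\frm = \fre$ and applying Galois theory to the tower $k \subseteq H_\fre \subseteq H_\frm$ yields simultaneously $J_k/J_\fre \simeq G(H_\fre/k)$ and $J_\fre/J_\frm \simeq G(H_\frm/H_\fre)$.

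For the ideal-theoretic identifications, I would introduce the homomorphism $\phi \colon J_k \twoheadrightarrow I_A$ sending $(a_v)_v$ to $\prod_\frp \frp^{\mathrm{ord}_\frp(a_\frp)}$, the product ranging over the maximal ideals of $A$. A direct verification shows $\ker \phi = U(\fre)$ and $\phi(k^\times) = P_A$, so $\phi$ descends to $J_k/J_\fre \simeq I_A/P_A = \Pic(A)$. For the final isomorphism, the Chinese Remainder Theorem gives $U(\fre)/U(\frm) \simeq \prod_{\frp \mid \frm} U_\frp/U_\frp^{(v_\frp(\frm))} \simeq (A/\frm)^\times$; combined with the standard identification $J_\fre/J_\frm \simeq U(\fre)/(U(\fre) \cap k^\times U(\frm))$, I would verify $U(\fre) \cap k^\times U(\frm) = \mathbb{F}_q^\times \cdot U(\frm)$ via the key identity $k^\times \cap U(\fre) = A^\times = \mathbb{F}_q^\times$, which reflects the classical fact that the units of the ring of rational functions on a smooth projective curve over $\mathbb{F}_q$ regular outside a single place are exactly the constants.

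The main obstacle is the careful justification of the first isomorphism, specifically identifying $H_\frm$ with the subfield of the full ray-class field cut out by the full local factor $k_\infty^\times$; this is where the definition of ``real'' (split at $v_\infty$) enters in an essential way. Once this is granted, the remaining assertions reduce to Galois-theoretic bookkeeping along the tower $k \subseteq H_\fre \subseteq H_\frm$ and to the explicit local computations above, together with the determination $A^\times = \mathbb{F}_q^\times$.
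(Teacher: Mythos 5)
The paper does not prove this proposition: it records the statement and attributes it to Hayes \cite{Hay85}, so there is no in-text argument to compare against. Your proposal is a correct and self-contained derivation from global class field theory, and it is the standard argument one would expect to find behind Hayes's treatment. Your reading of the second displayed isomorphism as $J_k/J_\fre \simeq G(H_\fre/k)$ (fixing a typo in the source) is right. The three essential points are all handled correctly and in the natural order: the identification of $J_\frm$ as the norm group of the maximal abelian extension of conductor dividing $\frm$ split at $v_\infty$ (this is where ``real'' enters, via the local criterion $k_\infty^\times \subseteq N_{L/k}J_L$ for complete splitting); the surjection $\phi\colon J_k \to I_A$ with $\ker\phi = U(\fre)$ and $\phi(k^\times) = P_A$, giving $J_k/J_\fre \simeq \Pic(A)$; and the computation $k^\times \cap U(\fre) = A^\times = \Bbb F_q^\times$, which together with $U(\fre)/U(\frm) \simeq (A/\frm)^\times$ by CRT yields $J_\fre/J_\frm \simeq (A/\frm)^\times/\Bbb F_q^\times$ for $\frm \ne \fre$.
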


The real ray--class fields $H_{\frak m}$ are contained in slightly larger abelian extensions $H_{\frak m}^\ast/k$, of conductor $\frak m\cdot v_\infty$, tamely ramified at $v_\infty$. The advantage of passing 
from $H_{\frak m}$ to $H_{\frak m}^\ast$ is that the latter can be explicitly constructed by adjoining torsion points of certain rank $1$ $A$--Drinfeld modules to the field of definition $H_{\frak e}^\ast$ of these Drinfeld modules. 
Next, we give the id\`ele theoretic description and explicit construction of the fields $H_{\frak m}^\ast$, both due to Hayes \cite{Hay85}.\\

As in \cite{Hay85}, let us fix a sign function 
$${\rm sgn}:  k_\infty^\times\to \Bbb F_\infty^\times.$$
By definition, this is a group morphism, such that ${\rm sgn}(U_{\infty}^{(1)})=1$ and ${\rm sgn}\vert_{\Bbb F_\infty^\times}={\rm id}_{\Bbb F_\infty^\times}.$  Note that ${\rm sgn}$ is uniquely determined by the value
${\rm sgn}(\pi_\infty)$ at the fixed uniformiser $\pi_\infty$.\\

For every integral ideal $\frak m\subseteq A$, we define the following subgroups of $J_k$.
$$U^*(\frm) := \{\left(\alpha_v\right)_v \in U(\frak m) \mid\, {\rm sgn}(\alpha_\infty)=1\},\quad J_{\frak m}^\ast:=k^\times\cdot U^*(\frak m).$$
\begin{definition}
For all integral ideals $\frm \sseq A$, we define $H_{\frak m}^\ast$ to be the unique abelian extension of $k$ which corresponds to the subgroup $J_{\frak m}^\ast$ of $J_k$ via the standard class--field theoretic correspondence. 
\end{definition}

For all $\frak m$ as above, with $\frak m\ne \frak e$, we have a canonical commutative diagram of group morphisms with exact rows and columns.

\begin{equation*}
\xymatrix{0\ar[r] & \Bbb F_q^\times\ar[r]\ar@{^{(}->}[d] & \Bbb F_\infty^\times\ar[r]\ar@{^{(}->}[d] & \Bbb F_\infty^\times/\Bbb F_q^\times\ar[r]\ar@{^{(}->}[d] &0\\
0\ar[r] & (A/\frak m)^\times \ar[r]\ar@{>>}[d] & J_k/J_{\frak m}^\ast\ar[r]\ar@{>>}[d] & J_k/J_{\frak e}^\ast\ar[r]\ar@{>>}[d] &0\\
0\ar[r] & (A/\frak m)^\times/\Bbb F_q^\times \ar[r] & J_k/J_{\frak m}\ar[r] & J_k/J_{\frak e}\ar[r] &0}
\end{equation*}\\\\
As a consequence, we have the following diagram of abelian extensions of $k$, whose relative Galois groups are canonically isomorphic to the labels on the connecting line segments.

\begin{equation}\label{field-diagram-1}
\xymatrix{
& & H_\frm^\ast \\
& H_\fre^*H_\frm \ar@{-}[ur]^{\Bbb F_q^\times} & \\
H_\fre^* \ar@{.}@/^2.0pc/[uurr]^{(A/\frm)^\times} \ar@{-}[ur] \ar@{-}[dd]_{\Bbb F_\infty^\times/\Bbb F_q^\times} & & \\
& H_\frm \ar@{-}[uu]\ar@{.}[uuur]_{\Bbb F_\infty^\times}& \\
H_\fre \ar@{-}[ur]_{\frac{(A/\frak m)^\times}{\Bbb F_q^\times}} & & \\
k \ar@{-}[u]^{{\rm Pic}(A)} &&
}
\end{equation}\\
The extensions $H_\fre^\ast/H_\fre$ and $H_\frm^\ast/H_\frm$ are totally and tamely ramified at the primes above $v_\infty$.\\

Next, we describe the explicit Drinfeld modular construction of the fields $H_\fre^\ast$ and $H_\frm^\ast$, for all $\frm\ne \fre$. Let $\Bbb C_\infty$ be the $v_\infty$--completion of the algebraic closure of $k_\infty$. Let $\Bbb C_\infty\{\tau\}$ be the non-commutative ring of twisted polynomials with the rule $\tau\omega = \omega^q\tau$, for
$\omega \in \Bbb C_\infty$. We write
\[
D \colon  \Bbb C_\infty\{\tau\} \lra \Bbb C_\infty, \quad a_0\tau^0 + a_1\tau^1 + \ldots + a_d\tau^d \mapsto a_0,
\]
for the constant term map.

\begin{definition}[Hayes] A map
 $\rho \colon A \lra \Bbb C_\infty\{\tau\}$, $x \mapsto \rho_x$, is called a sgn--normalized Drinfeld module of rank one if the following are satisfied.
\begin{itemize}
\item [(a)] $\rho$ is an $\Fq$-algebra homomorphism.
\item [(b)] $\deg_\tau(\rho_x) = \deg(x):={\rm dim}_{\Bbb F_q}(A/xA)=-{\rm ord}_{v_\infty}(x)d_\infty$, for all $x\in A$.
\item [(c)] The map $A \lra \Bbb C_\infty$, $x \mapsto D(\rho_x)$, is the inclusion $A \sseq \Bbb C_\infty$.
\item [(d)] If $s_\rho(x)$ denotes the leading coefficient of $\rho_x \in \Bbb C_\infty\{\tau\}$, then $s_\rho(x) \in \Finfty^\times$, for
all $x \in A$. 
\item[(e)] If one extends $s_\rho$ to $k_\infty^\times$ by sending 
\[
x = \sum_{i=i_0}^\infty a_i \pi_\infty^i  \mapsto a_{i_0}s_\rho(\pi_\infty)^{i_0}, 
\]
if $a_{i_0}\ne 0$, then $s_\rho \colon k_\infty^\times \lra \Finfty^\times$ is a twist of ${\rm sgn}$, i.e there exists $\sigma \in \Gal(\Finfty/\Fq)$, such
that $s_\rho = \sigma \circ \sgn$.
\end{itemize}
\end{definition}
Any Drinfeld module as above endows $(\Bbb C_\infty, +)$ with an $A$--module structure given by 
$$x\ast z:=\rho_x(z), \qquad \text{ for all } x\in A,\,  z\in\Bbb C_\infty,$$
where $(\sum_i a_i\tau^i)(z)=\sum_i a_iz^{q^i}$.

\begin{definition} Let $\rho:A\to\Bbb C_\infty\{\tau\}$ be a rank $1$, sgn--normalized Drinfeld module as above.
\begin{itemize}
\item[(a)] The minimal field of definition $k_\rho$ of $\rho$ is the extension of $k$ inside $\Bbb C_\infty$ generated by the coefficients of the twisted polynomials $\rho_x$, for all $x\in A$.
\item[(b)] For all integral ideals $\frm\subseteq A$ with $\frm\ne\fre$, we let
$$\rho[\frm] := \{ \alpha \in \Bbb C_\infty\mid \rho_x(\alpha) = 0 \text{ for all } x \in \frm\}$$
denote the $A$--module of $\frm$--torsion points of  $\rho$.
\end{itemize}
\end{definition}

The following gives an explicit construction of the class--fields $H^\ast_\frm$. (See \cite[\S4]{Hay85} for proofs.)
\begin{prop}[Hayes \cite{Hay85}] Let $\rho$ be a rank $1$, sgn--normalized Drinfeld module as above. Then,  the following hold, for all ideals $\frm\subseteq A$, with $\frm\ne\fre$.
\begin{enumerate}
\item The minimal field of definition $k_\rho$ of $\rho$ equals $H_\fre^\ast$.
\item We have an equality $H_\frm^\ast=H_\fre^\ast(\rho[\frm])$.
\item The $A/\frm$--module $\rho[\frm]$ is free of rank $1$ and, via the canonical isomorphism 
$$(A/\frm)^\times\simeq G(H_\frm^\ast/H_\fre^\ast), \qquad \widehat x\to\sigma_{\widehat x},$$
we have $\sigma_{\widehat x}(\alpha)=s_\rho(x)^{-1}\cdot\rho_x(\alpha)$, for all $x\in A$ coprime to $\frm$ and all $\alpha\in\rho[\frm]$.
\end{enumerate}
\end{prop}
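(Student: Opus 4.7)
The plan is to follow the analytic-then-arithmetic strategy of Drinfeld and Hayes: first classify rank one sgn--normalized Drinfeld modules over $\Bbb C_\infty$ analytically via lattices, then identify Galois descent and torsion extensions using the class field theoretic description encoded in Proposition \ref{cft-hm-prop}.

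For part (1), I would invoke Drinfeld's uniformization, which identifies isomorphism classes of rank one Drinfeld $A$-modules over $\Bbb C_\infty$ with the set of rank one $A$-lattices $\Lambda \subset \Bbb C_\infty$ modulo the homothety action of $\Bbb C_\infty^\times$, hence with $\Pic(A)$. The sgn--normalization condition refines the equivalence: the homothety scalar $c$ must satisfy $\sgn(c) \in \Bbb F_q^\times$, so the set of sgn--normalized rank one Drinfeld modules becomes a principal homogeneous space for the group $J_k/J_\fre^\ast$, which by Proposition \ref{cft-hm-prop} is canonically $G(H_\fre^\ast/k)$. The absolute Galois group $G_k$ acts on this set by acting on the coefficients of the $\rho_x$, and one checks this action is compatible with the torsor structure, so the stabilizer of $\rho$ is exactly $G_{H_\fre^\ast}$ and $k_\rho = H_\fre^\ast$.

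For part (2) and the freeness asserted in (3), note that $\rho_x(z)$ is a separable additive polynomial of degree $q^{\deg(x)}$, so $|\rho[\frm]| = |A/\frm|$; combined with the faithful $A/\frm$-action, this forces $\rho[\frm]$ to be free of rank one over $A/\frm$, hence $\Aut_{A/\frm}(\rho[\frm]) = (A/\frm)^\times$. The extension $H_\fre^\ast(\rho[\frm])/H_\fre^\ast$ is therefore abelian with Galois group embedding into $(A/\frm)^\times$. I would then compute conductors (ramification is supported exactly above primes dividing $\frm$, with tame inertia at $v_\infty$ inherited from $H_\fre^\ast$) and apply Proposition \ref{cft-hm-prop} to identify this extension with $H_\frm^\ast/H_\fre^\ast$ and, simultaneously, to recognize the embedding of Galois groups as the canonical one $\widehat x \mapsto \sigma_{\widehat x}$.

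For the explicit reciprocity formula in (3), fix $x \in A$ coprime to $\frm$ and consider a prime $\frq$ of $H_\fre^\ast$ lying above the place $v_x$ of $k$ determined by $x$; away from $\frm \cdot v_\infty$, $\rho$ has good reduction at $\frq$, and the Frobenius at $\frq$ acts on $\rho[\frm]$ through the reduction of $\rho_x$ suitably renormalized to preserve the sgn--condition, i.e.\ by multiplication by $s_\rho(x)^{-1}$. Lifting via Hensel's lemma from the reduction back to characteristic zero, and then using the Artin reciprocity map to convert this Frobenius into the element $\sigma_{\widehat x}$, one obtains the formula $\sigma_{\widehat x}(\alpha) = s_\rho(x)^{-1}\rho_x(\alpha)$. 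The main obstacle lies in this last step: verifying good reduction of $\rho$ at primes of $H_\fre^\ast$ away from $\frm \cdot v_\infty$ and computing the Frobenius precisely in terms of $\rho_x$ and $s_\rho(x)$. This is the technical core of Hayes' construction of division values and their Galois-theoretic behavior in \cite{Hay85}, which I would cite rather than redo.
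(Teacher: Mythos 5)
The paper itself does not prove this Proposition; it is stated as a result of Hayes and explicitly refers the reader to \cite[\S4]{Hay85} for the proofs. Your sketch is a faithful high-level outline of Hayes' own strategy (analytic uniformization of rank one Drinfeld modules by $A$-lattices, the $*$-action making the set of sgn-normalized modules a torsor under the narrow class group $J_k/J_\fre^\ast \simeq G(H_\fre^\ast/k)$ via Proposition \ref{cft-hm-prop}, separability/counting to get freeness of $\rho[\frm]$, and the good-reduction-plus-Frobenius computation giving the explicit reciprocity law), and since you ultimately defer to \cite{Hay85} for the technical core, you end up in the same place the paper does.

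One small imprecision worth flagging: when you describe the conductor computation for $H_\fre^\ast(\rho[\frm])/H_\fre^\ast$, you say the tame inertia at $v_\infty$ is ``inherited from $H_\fre^\ast$.'' In fact the extension $H_\frm^\ast/H_\fre^\ast$ acquires \emph{additional} tame ramification at the places above $v_\infty$, with ramification index $q-1 = |\Bbb F_q^\times|$ (this is the top segment $H_\frm^\ast/H_\fre^\ast H_\frm$ in the field diagram \eqref{field-diagram-1}); the ramification index of $v_\infty$ in $H_\fre^\ast/k$ is $(q^{d_\infty}-1)/(q-1)$ while in $H_\frm^\ast/k$ it is the full $q^{d_\infty}-1$. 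This does not affect the correctness of your identification of the extension with $H_\frm^\ast$ — the id\`ele-theoretic definition of $J_\frm^\ast$ already accounts for this — but the phrase as written misstates where the ramification over $v_\infty$ comes from.
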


\begin{remark}
The proposition above should be viewed as the function field analogue of the theory of complex multiplication for quadratic imaginary fields, where the role of $\rho$ is played by an elliptic curve with CM by the ring of integers
of a quadratic imaginary field $k$.
\end{remark}

\subsection{$\Zp^{\aleph_0}$--extensions (Geometric Iwasawa towers)}\label{section-geometric-Iwasawa-towers}

We continue to use the notation of Subsection \ref{Drinfeld modules and cft}.  We fix a prime ideal $\frp$ of $A$ and an integral ideal $\frf \nt A$
which is coprime to $\frp$.  For all $n \ge 0$, we consider the following abelian extensions of $k$, viewed as subfields of $\Bbb C_\infty$:
\[
L_n := H_{\frf\frp^{n+1}}, \quad L_n^\ast := H^\ast_{\frf\frp^{n+1}}
\]
and set
\[
L_\infty := \bigcup_{n \ge 0} L_n, \quad L_\infty^\ast := \bigcup_{n \ge 0} L_n^\ast.
\]
For all $n\geq 0$, we let $G_n:=G(L_n/k)$, $\Gamma_n:=G(L_n/L_0)$. Also, we let $G_\infty:=G(L_\infty/k)$, $\Gamma_\infty:=G(L_\infty/L_0)$. The results in the previous section show that we have the following commutative diagrams of abelian groups with exact rows and canonical vertical isomorphisms.\\
\begin{equation*}
\xymatrix{
0 \ar[r] & U_{k_\frp}^{(1)}/U_{k_\frp}^{(n+1)}  \ar[r] \ar[d]^\wr  & (A/\frf\frp^{n+1})^\times/\Bbb F_q^\times \ar[r] \ar[d]^\wr &(A/\frf\frp)^\times/\Bbb F_q^\times \ar[r] \ar[d]^\wr &  0 \\
0 \ar[r] & \Gamma_n  \ar[r]   & G(L_n/H_{\frak e})\ar[r] & \Gal(L_0/H_\fre)\ar[r]  &  0
}
\end{equation*}\\
Consequently, for all $n\geq 0$, we obtain the following diagram of field extensions whose relative
Galois groups are canonically isomorphic to the labels of the connecting line segments.\\
\begin{equation}\label{field-diagram-2}
\xymatrix{
& & H^\ast_{\frf\frp^{n+1}}  \\
& H_{\frf\frp}^\ast \ar@{-}[ur]^{U_{k_\frp}^{(1)}/U_{k_\frp}^{(n+1)}}  & \\
& H_\fre^* H_{\frf\frp} \ar@{-}[u]^{\Bbb F_q^\times} & H_{\frf\frp^{n+1}} \ar@{-}[uu]_{\Bbb F_\infty^\times} \\
H_\fre^* \ar@{-}[ur]  & H_{\frf\frp} \ar@{-}[ur]_{U_{k_\frp}^{(1)}/U_{k_\frp}^{(n+1)}} \ar@{-}[u] & \\
H_\fre \ar@{-}[ur]_{\frac{(A/\frf\frp)^\times}{\Bbb F_q^\times}} \ar@{-}[u]^{\Bbb F_\infty^\times/\Bbb F_q^\times} & & \\
k \ar@{-}[u]^{{\rm Pic}(A)} \ar@{-}@{.}@/_3pc/[uuurr]_{G_n} \ar@{-}@{.}@/_2pc/[uur]_{G_0}& &
}
\end{equation}\\
Further, we obtain topological group isomorphisms
\[
\Gamma_\infty := \Gal(L_\infty/L_0) \simeq \Gal(L_\infty^\ast/L_0^\ast) \simeq \varprojlim_n \frac{U_{k_\frp}^{(1)}}{U_{k_\frp}^{(n+1)}} \simeq U_{k_\frp}^{(1)}.
\]
Now, we recall the following structure theorem, due to Iwasawa. (See also \cite[Satz II.5.7]{Neu99}).

\begin{theorem}[{Iwasawa \cite{Iw86}}]
  Let $K$ be a local field of characteristic $p > 0$ and let $U_K^{(1)}$ denote its group of principal units. Then,
there is an isomorphism of topological groups
\[
U_K^{(1)}\simeq \Zp^{\aleph_0},
\]
where the right side denotes a direct product of countably many copies of $(\Bbb Z_p, +)$, endowed with the product of the $p$--adic topologies.
\end{theorem}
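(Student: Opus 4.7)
The plan is to construct an explicit topological isomorphism, following the classical argument of Iwasawa. Since $K$ has characteristic $p$, we may fix a uniformizer $t$ and identify $K$ with the Laurent series field $\kappa((t))$, where $\kappa = \F_q$ with $q = p^f$. Fix an $\F_p$-basis $\omega_1,\dots,\omega_f$ of $\F_q$. For each positive integer $n$ with $p \nmid n$, I define the continuous homomorphism
\[
g_n \colon \Zp^f \lra U_K^{(n)}, \qquad (a_1,\dots,a_f) \longmapsto \prod_{i=1}^f (1+\omega_i t^n)^{a_i},
\]
which makes sense because $U_K^{(n)}$ is pro-$p$. The point of working only with indices $n$ coprime to $p$ is that $(1 + \omega_i t^n)^{p^s} \equiv 1 + \omega_i^{p^s} t^{p^s n} \pmod{\mathfrak m_K^{p^s n + 1}}$, so iterating $g_n$ with a factor $p^s$ produces prescribed residues in the graded piece $U_K^{(p^s n)}/U_K^{(p^s n + 1)} \simeq \kappa$.

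I then assemble these into the continuous map
\[
g \;=\; \prod_{p \nmid n} g_n \colon A \;:=\; \prod_{p \nmid n} \Zp^f \lra U_K^{(1)}, \qquad (\alpha_n)_n \longmapsto \prod_n g_n(\alpha_n),
\]
whose source is a compact abelian pro-$p$ group (by Tychonov), topologically isomorphic to $\Zp^{\aleph_0}$ since the index set is countably infinite. The strategy is to show that $g$ is a continuous bijection between compact Hausdorff topological groups; this will automatically force it to be a topological isomorphism, completing the proof.

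For surjectivity, I would first prove density of $g(A)$ in $U_K^{(1)}$ by induction on the filtration. The congruence recalled above shows that at each level $m = p^s n$ with $p \nmid n$, the subset $g_n(p^s \Zp^f)$ surjects onto $U_K^{(m)}/U_K^{(m+1)} \simeq \kappa$ (via the additive basis $\omega_1,\dots,\omega_f$). Hence any element of $U_K^{(1)}$ can be written as an infinite convergent product of the $g_n(\alpha_n)$'s, so $g(A)$ is dense. Compactness of $A$ and continuity of $g$ then promote density to surjectivity: any limit point of $g(A)$ equals the image of the corresponding limit in $A$.

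The main obstacle is injectivity, which requires isolating a ``leading term'' in the infinite product. Suppose $\xi = (\alpha_n) \neq 0$; for each nonzero $\alpha_n$ write $\alpha_n = p^{s(\alpha_n)} \beta_n$ with $\beta_n \notin p\Zp^f$, and set $m(\alpha_n) := p^{s(\alpha_n)} n$. The values $m(\alpha_n)$ are pairwise distinct (they have different $p$-free parts), so there is a unique $n_0$ minimizing $m(\alpha_n)$ over the nonzero components. For this $n_0$, the congruence used above together with the fact that $\beta_{n_0} \notin p\Zp^f$ forces $g_{n_0}(\alpha_{n_0}) \in U_K^{(m_0)} \setminus U_K^{(m_0+1)}$, where $m_0 := m(\alpha_{n_0})$, while every other factor $g_n(\alpha_n)$ lies in $U_K^{(m_0+1)}$. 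Hence $g(\xi) \equiv g_{n_0}(\alpha_{n_0}) \not\equiv 1 \pmod{U_K^{(m_0+1)}}$, proving $g(\xi) \neq 1$. With $g$ a continuous bijection between compact Hausdorff groups, it is automatically a topological isomorphism, and the desired conclusion $U_K^{(1)} \simeq \Zp^{\aleph_0}$ follows.
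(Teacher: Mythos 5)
Your proof is correct and follows essentially the same route as the paper intends: the paper states the theorem with a citation to Iwasawa and to Neukirch (Satz II.5.7), and your argument is precisely the classical Iwasawa construction — the maps $g_n$ for $p \nmid n$ built from an $\F_p$-basis of the residue field and powers of a uniformizer, surjectivity via density plus compactness of $A = \prod_{p\nmid n}\Zp^f$, and injectivity via the leading-term argument isolating the minimal weight $m(\alpha_n) = p^{s(\alpha_n)}n$. Everything checks out; in particular your congruence $(1+\omega_i t^n)^{p^s}=1+\omega_i^{p^s}t^{np^s}$ is even an exact identity in characteristic $p$, and the $\{\omega_i^{p^s}\}$ remain an $\F_p$-basis since Frobenius is an automorphism, which is what makes the graded-piece surjectivity and the ``$\beta_{n_0}\notin p\Zp^f\Rightarrow$ nontrivial leading term'' step go through.
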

As a consequence, we have an isomorphism of topological groups
\begin{equation}\label{Gamma-infinity-isomorphism}\Gamma_\infty:=G(L_\infty/L_0)\simeq \Bbb Z_p^{\aleph_0}.\end{equation}\\
The following gives a description of the Iwasawa algebras relevant in our considerations below.
\begin{prop}\label{big-Iwasawa-algebra-prop}
Let $(\mathcal O, \frak m_{\mathcal O})$ be a local, compact $\Bbb Z_p$--algebra, which is $\frak m_{\mathcal O}$--adically complete. If $\mathcal G$ is an abelian pro--$p$ group, topologically isomorphic to $\Bbb Z_p^{\aleph_0}$, then the following hold.
\begin{enumerate}
\item 
There is an isomorphism of topological $\mathcal O$--algebras
$$\mathcal O[[\mathcal G]]\simeq \mathcal O[[X_1, X_2, \dots]],$$
where the left side is endowed with the profinite limit topology and the right side with the projective limit of the $(\frak m_{\mathcal O}, X_1, \dots, X_n)$--adic topologies on each $\mathcal O[[X_1, \dots, X_n]]$, as $n\to\infty$. 
\item If $\mathcal O$ is an integral domain, then $\mathcal O[[\mathcal G]]$ is a local, integral domain.
\item If $\mathcal O$ is a PID, then $\mathcal O[[\mathcal G]]$ is a UFD and, therefore, normal. 
\end{enumerate}
\end{prop}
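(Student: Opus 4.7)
The plan is to reduce each of the three assertions to the corresponding statement for the finite-variable Iwasawa algebras $R_n := \mathcal O[[X_1,\ldots,X_n]]$, and then to transport that property to the inverse limit.

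For part (1), I would fix a topological isomorphism $\mathcal G \simeq \prod_{i\ge 1}\mathbb Z_p$ with standard topological generators $\gamma_i$, and for $n,m\ge 1$ consider the open finite-index subgroup
\[
U_{n,m}:=\bigl\{(a_i)\in\mathcal G : a_i\in p^m\mathbb Z_p \text{ for } i\le n,\ a_i \text{ arbitrary for } i>n\bigr\}.
\]
This family is cofinal among open finite-index subgroups of $\mathcal G$, so $\mathcal O[[\mathcal G]]=\varprojlim_{(n,m)}\mathcal O[(\mathbb Z/p^m)^n]$. Since this is a bifiltered inverse limit, it may be computed iteratively: letting $m\to\infty$ first gives $\mathcal O[[\mathbb Z_p^n]]\simeq\mathcal O[[X_1,\ldots,X_n]]$ via the standard Iwasawa isomorphism $\gamma_i\mapsto 1+X_i$, and the surjection $\mathbb Z_p^{n+1}\twoheadrightarrow\mathbb Z_p^n$ forgetting the last coordinate corresponds on power series to $X_{n+1}\mapsto 0$, which is exactly the transition defining $\mathcal O[[X_1,X_2,\ldots]]$. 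Passing to $\varprojlim_n$ yields the claimed topological $\mathcal O$-algebra isomorphism.

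For part (2), I would write $R:=\mathcal O[[\mathcal G]]=\varprojlim_n R_n$ via (1). Each $R_n$ is local with unit group consisting of the elements whose constant term in $\mathcal O$ is a unit; a compatible sequence $(f_n)\in R$ has a common constant term $f_n(0,\ldots,0)\in\mathcal O$, so $(f_n)$ is a unit in $R$ iff this constant term is a unit in $\mathcal O$, which shows that $R$ is local. For the integral-domain property, suppose $(f_n)(g_n)=0$ with $(f_n)\ne 0$ and choose $n_0$ minimal with $f_{n_0}\ne 0$; for each $n\ge n_0$ the transition $R_n\to R_{n_0}$ (specializing $X_{n_0+1},\ldots,X_n$ to zero) sends $f_n\mapsto f_{n_0}\ne 0$, so $f_n\ne 0$. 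Since $R_n$ is a domain, $g_n=0$ for $n\ge n_0$, and the compatibility of the sequence forces $g_n=0$ for all $n$.

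For part (3), the hypothesis that $\mathcal O$ is a local PID forces it to be a field or a DVR, so each $R_n$ is a regular local Noetherian ring and hence a UFD by Auslander--Buchsbaum. The hardest step will be transferring UFD-ness to the non-Noetherian limit $R$, since the usual Noetherian stability tools do not apply. My approach would follow Cashwell--Everett: using surjectivity of the transition maps (Mittag--Leffler) one checks that $\bigcap_k\pi^kR=0$ and $R/\pi R\simeq k[[X_1,X_2,\ldots]]$, where $k$ is the residue field of $\mathcal O$ and $\pi$ is a uniformizer; the latter is a UFD by the classical Cashwell--Everett theorem over a field. A lifting argument, tracking irreducibles through the transition maps $R_{n+1}\twoheadrightarrow R_n$, then assembles compatible factorizations of the truncations $f_n\in R_n$ into a unique factorization of $f\in R$. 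Normality is an immediate consequence, since every UFD is integrally closed in its field of fractions.
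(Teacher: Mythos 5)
Parts (1) and (2) of your proposal are correct. For (1), reducing to the standard Iwasawa isomorphism $\mathcal O[[\Bbb Z_p^n]]\simeq\mathcal O[[X_1,\ldots,X_n]]$ and passing to the limit over $n$ is exactly the paper's route (the paper phrases the finite-variable step as an inductive application of Weierstrass preparation, which is how one proves that very isomorphism). For (2), the paper merely cites Lemma~1 of Nishimura, whereas you give a direct and correct argument: locality via the constant-term map to $\mathcal O/\frak m_{\mathcal O}$, and the domain property via the observation that once $f_{n_0}\neq 0$, every $f_n$ with $n\geq n_0$ is nonzero because it surjects onto $f_{n_0}$ under the truncation, forcing $g_n=0$ for all $n$. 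This is a nice, self-contained replacement for the citation.

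Part (3), however, has a genuine gap. Your preliminary observations are correct: each $R_n=\mathcal O[[X_1,\ldots,X_n]]$ is a regular local Noetherian ring and hence a UFD; $\bigcap_k\pi^kR=0$; and $R/\pi R\simeq k[[X_1,X_2,\ldots]]$ is a UFD by Cashwell--Everett. But the crucial final step --- ``a lifting argument, tracking irreducibles through the transition maps $R_{n+1}\twoheadrightarrow R_n$ to assemble compatible factorizations of the $f_n$'' --- is not actually an argument. Under the truncation $X_{n+1}\mapsto 0$, an irreducible of $R_{n+1}$ can become a unit (e.g., $1+X_{n+1}$), zero (e.g., $X_{n+1}$), or a reducible element (e.g., $X_1X_2+X_{n+1}$), so the prime factorizations of the truncations $f_n$ are not coherent along the tower and cannot be naively glued. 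It is also not clear how the two facts $\bigcap_k\pi^kR=0$ and $R/\pi R$ a UFD are to be combined; they do not immediately yield that $R$ is a UFD, and some further argument is required. Overcoming precisely these obstructions is the content of the Cashwell--Everett / Deckard--Durst / Nishimura theorems, and the paper handles it by citing Nishimura's Theorem~1 rather than reproving it; your sketch does not substitute for that citation.
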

\begin{proof}{\it (Sketch.)}
(1) Use induction on $n$ and the Weierstrass Preparation Theorem (see Thm. 2.1 in \cite[Ch.5, \S2]{Lang-cyclo}) to show that one has an isomorphism of topological $\mathcal O$--algebras
$$\mathcal O[[\Bbb Z_p^n]]\simeq \mathcal O[[X_1, \dots, X_n]].$$
Then, pass to a projective limit with respect to $n$ to get the desired isomorphism.

(2) This is Lemma 1 in \cite{Nishimura-I}. Note that with the notations and definitions of loc.cit. we have $\mathcal O[[X_1, X_2, \dots]]=\mathcal O\{X\}_{\aleph_0}$, where $X$ is a set of cardinality $\aleph_0$.

(3) This is Theorem 1 in \cite{Nishimura-I}. See the note above regarding the notations in loc.cit.
\end{proof}

\begin{remark} Typical examples of $\Bbb Z_p$--algebras $\mathcal O$ as in the Proposition above are rings of integers $\mathcal O_F$ in finite extensions $F/\Bbb Q_p$ of $\Bbb Q_p$.  Also, group rings  
$\mathcal O_F[P]$, where $P$ is a finite, abelian $p$--group satisfy the hypotheses of part (1), but not parts (2)--(3) of the proposition above. Note that in the latter case the maximal ideal of $\mathcal O_F[P]$ is given by  $\frak m_{\mathcal O_F[P]}=(\frak m_{\mathcal O_F}, I_P)$, where $I_P$ is the augmentation ideal of $\mathcal O_F[P]$. Note that if $P$ is a product of $r$ cyclic groups of orders $p^{n_1}, \dots, p^{n_r}$, respectively, then we have isomorpisms of topological $\mathcal O_F$--algebras
\begin{eqnarray*}
\mathcal O_F[P]&\simeq &\mathcal O_F[X_1, \dots, X_r]/\left((X_1+1)^{p^{n_1}}-1, \dots (X_r+1)^{p^{n_r}}-1\right)\\
 &\simeq& O_F[[X_1, \dots, X_r]]/\left((X_1+1)^{p^{n_1}}-1, \dots (X_r+1)^{p^{n_r}}-1\right),
\end{eqnarray*}
where the first isomorphism sends the generators of $P$ to $\widehat{X_1+1}, \dots, \widehat{X_r+1}$, respectively,
and the second is a consequence of the Weierstrass preparation theorem cited above, applied inductively.
Since the right-most algebra is clearly complete in its $(\frak m_{\mathcal O_F}, X_1, \dots, X_r)$--adic topology, the left-most algebra is also complete in its $\frak m_{{\mathcal O}_F[P]}$--adic topology.
\end{remark}\medskip

We end this section with a result on the decomposition groups $G_v(L_\infty/L_n)$ in the extension $L_\infty/L_n$, for all primes $v\vert\frak{f}$ and a fixed $n\geq 0$. This will be used in the proof of Proposition \ref{not a zero divisor prop} below. To that end, fix $n\geq 0$ and for every prime $v\vert\frak{f}$, let $U_{S_v}$ be the group of $S_v$--units in $k^\times$, where $S_v:=\{v, \infty\}$. We remind the reader that these are the elements of $k^\times$ whose divisor is supported at $S_v$.  Consequently, we have a group isomorphism
$$U_{S_v}\simeq\Bbb F_q^\times\times \Bbb Z.$$
Further we let $U_{S_v}^{(n+1)}:=\{x\in U_{S_v}\mid x\equiv 1 \mod (\frac{\frak{f}}{v^{{\rm ord}_v(\frak f)}}\cdot\frak p^{n+1})\}.$ This is a subgroup of finite index in $U_{S_v}$ which is torsion free. Therefore, it is infinite cyclic
$$U_{S_v}^{(n+1)}=x_v^{\Bbb Z},$$
generated by some $x_v\in k^\times$, which obviously satisfies the following
\begin{equation}\label{divisors}{\rm div}(x_v)={\rm ord}_v(x_v)\cdot v+{\rm ord}_{\infty}(x_v)\cdot\infty, \quad {\rm ord}_v(x_v)={-(d_\infty/d_v)}\cdot{\rm ord}_\infty(x_v)\ne 0.\end{equation}
\medskip

In what follows, we let $U(\frak f\frak p^\infty):=\bigcap_n U(\frak f\frak p^n)$ and let $i_v:k_v^\times\to J_k/k^\times U(\frak f\frak p^\infty)$ be the standard morphism
(sending $x\in k_v^\times$ into the class of the id\`ele having $x$ in the $v$--component and $1$ everywhere else),
for all primes $v$ of $k$. Now, we consider the topological group isomorphism 
 $$\rho_{\frak p}^{(n)}: U_{k_{\frak p}}^{(n+1)}\simeq G(L_\infty/L_n)$$
 obtained by composing the Artin reciprocity isomorphism $\rho \colon J_k/k^\times U({\frak f\frak p^\infty})\simeq G(L_\infty/k)$
 with the standard embedding  
 $U_{k_{\frak p}}^{(n+1)}\subseteq k_{\frak p}^\times\overset{i_{\frak p}}\longrightarrow J_k/k^\times U({\frak f\frak p^\infty})$. Note that $i_{\frak p}$ restricted to $U_{k_{\frak p}}^{(n+1)}$ is indeed injective, for all $n\geq 0$.
 
\begin{prop}\label{decomposition groups prop} With notations as above,  the following hold.
\begin{enumerate}
\item  For all primes $v\vert\frak f$, if we let $x_v^{\Bbb Z_p}$ denote the cyclic $\Bbb Z_p$--submodule of $U_{k_{\frak p}}^{(n+1)}$ generated by $x_v$, then $\rho_{\frak p}^{(n)}$ gives an isomorphism of topological groups
$$\rho_{\frak p}^{(n)}: x_v^{\Bbb Z_p}\simeq G_v(L_\infty/L_n).$$
\item Let $G_{\frak f}(L_\infty/L_n)$ be the subgroup of $G(L_\infty/L_n)$ generated by $G_v(L_\infty/L_n)$, for all $v\vert\frak f$. Then, if we let $f:={\rm card}\{v\mid v\vert\frak f\}$, we have topological group isomorphisms 
$$G_{\frak f}(L_\infty/L_n)\simeq \prod_{v\mid\frak f} G_{v}(L_\infty/L_n)\simeq \Bbb Z_p^f$$
\end{enumerate}
\end{prop}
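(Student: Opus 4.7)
The plan is to prove (1) by computing $\rho_{\frak p}^{(n)}(x_v)$ via the principal id\`ele trick and identifying it inside $G_v(L_\infty/L_n)$, then to deduce (2) from a $\Bbb Z_p$-independence statement for $\{x_v\}_{v\mid\frak f}$. For (1), first note that $x_v\in U_{k_{\frak p}}^{(n+1)}$ because the congruence defining $U_{S_v}^{(n+1)}$ includes $x_v\equiv 1\pmod{\frak p^{n+1}}$. Since $x_v\in k^\times$, its principal id\`ele lies in the kernel of $\rho$, so $\prod_w\rho(i_w(x_v))=1$. I then analyze each local factor and show that all contributions except those at $\frak p$ and $v$ vanish: for $w\nmid\frak f\frak p\infty$, $x_v$ is a $w$-unit, so $i_w(x_v)\in U(\frak f\frak p^\infty)$; for $w=v'\mid\frak f$ with $v'\ne v$, the congruence $x_v\equiv 1\pmod{v'^{{\rm ord}_{v'}(\frak f)}}$ places $i_{v'}(x_v)$ in the $v'$-component of $U(\frak f\frak p^\infty)$; for $w=\infty$, the decomposition group at $\infty$ in $L_\infty/k$ is trivial since $\infty$ splits completely in the real tower. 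Hence $\rho_{\frak p}^{(n)}(x_v)=\rho(i_v(x_v))^{-1}\in G_v(L_\infty/k)\cap G(L_\infty/L_n)=G_v(L_\infty/L_n)$, and by continuity $\rho_{\frak p}^{(n)}(x_v^{\Bbb Z_p})\subseteq G_v(L_\infty/L_n)$.

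For the reverse inclusion in (1), since $G_n$ is finite the subgroup $G(L_\infty/L_n)$ is open in $G(L_\infty/k)$, so $G_v(L_\infty/L_n)=\overline{\rho(i_v(k_v^\times))\cap G(L_\infty/L_n)}$. A routine class-field-theoretic computation---solving $i_v(\alpha)\in k^\times U(\frak f\frak p^{n+1})$ modulo $U(\frak f\frak p^\infty)$---shows this intersection equals $\rho(i_v(x_v^{\Bbb Z}))$: in a factorisation $i_v(\alpha)=y\cdot u$ the global factor $y$ must be an $S_v$-unit satisfying the congruence defining $U_{S_v}^{(n+1)}$, hence $y=x_v^m$ for some $m\in\Bbb Z$. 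Passing to the $\Bbb Z_p$-closure in the pro-$p$ group yields $G_v(L_\infty/L_n)=\rho_{\frak p}^{(n)}(x_v^{\Bbb Z_p})$. Injectivity of $\rho_{\frak p}^{(n)}$ on $x_v^{\Bbb Z_p}$ is inherited from its injectivity on $U_{k_{\frak p}}^{(n+1)}$, once one observes that $x_v\ne 1$ has infinite order in the torsion-free pro-$p$ group $U_{k_{\frak p}}^{(n+1)}\simeq\Bbb Z_p^{\aleph_0}$, so $x_v^{\Bbb Z_p}\simeq\Bbb Z_p$.

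For (2), transporting along $\rho_{\frak p}^{(n)}$ reduces the claim to showing that the closed $\Bbb Z_p$-submodule of $U_{k_{\frak p}}^{(n+1)}$ generated by $\{x_v\}_{v\mid\frak f}$ is free of rank $f$; surjectivity of $\bigoplus_{v\mid\frak f}\Bbb Z_p\to G_{\frak f}(L_\infty/L_n)$, $(c_v)\mapsto\prod_v\rho_{\frak p}^{(n)}(x_v)^{c_v}$, is built into the definition of $G_{\frak f}$. The $\Bbb Z$-independence of $\{x_v\}_{v\mid\frak f}$ in $k^\times/\Bbb F_q^\times$ is immediate from \eqref{divisors}: the divisors ${\rm div}(x_v)={\rm ord}_v(x_v)\cdot v+{\rm ord}_\infty(x_v)\cdot\infty$ have pairwise disjoint finite-place supports with ${\rm ord}_v(x_v)\ne 0$. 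The hard part---and the main obstacle in the proof---is upgrading this $\Bbb Z$-independence in $k^\times$ to $\Bbb Z_p$-independence in the pro-$p$ group $U_{k_{\frak p}}^{(n+1)}$, which amounts to a function-field Leopoldt-type statement at $\frak p$ for the $S_{\frak f}$-units. I plan to resolve it by direct analysis: a hypothetical nontrivial relation $\prod_v x_v^{c_v}=1$ with $(c_v)\in\Bbb Z_p^f\setminus\{0\}$ can be approximated by integer-exponent products $y_N:=\prod_v x_v^{c_v^{(N)}}\in k^\times$ converging to $1$ in the $\frak p$-adic topology on $k_{\frak p}$, each with divisor $\sum_v c_v^{(N)}({\rm ord}_v(x_v)\cdot v+{\rm ord}_\infty(x_v)\cdot\infty)$; a careful study of the $\frak p$-adic expansions of the $x_v$'s combined with the $\Bbb Z$-independence of their divisors forces $c_v=0$ for all $v$, contradicting nontriviality. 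Together with (1), this yields the desired chain $G_{\frak f}(L_\infty/L_n)\simeq\prod_{v\mid\frak f}G_v(L_\infty/L_n)\simeq\Bbb Z_p^f$.
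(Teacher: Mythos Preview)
Your treatment of part~(1) is essentially the paper's: both arguments identify $G_v(L_\infty/L_n)$ with the pro-$p$ closure of $\rho(i_v(k_v^\times))\cap G(L_\infty/L_n)$ and then compute this intersection as $\rho_{\frak p}^{(n)}(x_v^{\Bbb Z})$ by unwinding the id\`ele--class description of $L_\infty$. Your principal-id\`ele calculation and the reverse-inclusion argument are a correct (and slightly more detailed) version of what the paper calls ``easily seen''.

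The genuine gap is in part~(2). You correctly isolate the crux as the passage from $\Bbb Z$-independence of $\{x_v\}_{v\mid\frak f}$ in $k^\times$ to $\Bbb Z_p$-independence in $U_{k_{\frak p}}^{(n+1)}$, and you even name it a Leopoldt-type statement. But your proposed ``direct analysis'' does not prove it. Approximating a putative relation $\prod_v x_v^{c_v}=1$ by integer-exponent products $y_N\in k^\times$ with $y_N\to 1$ in $k_{\frak p}$ yields global elements whose divisors are supported at $\{v:v\mid\frak f\}\cup\{\infty\}$, entirely away from $\frak p$; the $\frak p$-adic convergence $v_{\frak p}(y_N-1)\to\infty$ therefore imposes no constraint whatsoever on those divisors, and the sketch ``a careful study of the $\frak p$-adic expansions \ldots\ forces $c_v=0$'' is a restatement of the difficulty, not a resolution of it. This is exactly the content of the function-field Leopoldt conjecture, which is a theorem but not a triviality. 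The paper dispatches this step by invoking Kisilevsky's proof of the (strong) function-field analogue of Leopoldt's conjecture; you should do the same, or else supply a complete independent argument (which would amount to reproving that result).
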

\begin{proof}
  (1) A well--known class--field theoretical fact gives an equality of groups
$$G_v(L_\infty/L_n)=\overline{\rho(i_v(k_v^\times))\cap\rho(i_{\frak p}(U_{k_{\frak p}}^{(n+1)}))},$$
where $\overline{X}$ denotes the pro-$p$ completion (topological closure)
of the subgroup $X$ inside the pro-$p$ group $G(L_\infty/L_n)$. However, it is easily seen that we have 
$$\rho(i_v(k_v^\times))\cap\rho(i_{\frak p}(U_{k_{\frak p}}^{(n+1)}))=\rho_{\frak p}^{(n)}(x_v^{\Bbb Z}),$$
which, after taking the pro--$p$ completion of both sides, concludes the proof of part (1).\\

(2) According to part (1), it suffices to show that the elements $\{x_v\mid v\vert\frak f\}$ are $\Bbb Z_p$--linearly independent in $U_{k_{\frak p}}^{(n+1)}$. However, since their divisors are clearly  $\Bbb Z$--linearly independent (see \eqref{divisors} above), these elements are $\Bbb Z$--linearly independent in $k^\times$. Now, the function field (strong) analogue of Leopoldt's Conjecture, proved in \cite{Kisilevsky}, implies that the elements in question are $\Bbb Z_p$--linearly 
independent in  $U_{k_{\frak p}}^{(n+1)}$, as desired.
\end{proof}

\subsection{The basic example: The Carlitz module}
\label{Carlitz subsection}

We briefly describe the special situation which arises in the case of the Carlitz cyclotomic extension of a rational function field
(see e.g. \cite[Sec.~2]{Angles}).\\

Let $k = \Fq(\theta)$ be the rational function field over $\Bbb F_q$ and let $v_\infty$ correspond to the valuation on $k$ of uniformizer $1/\theta$. 
Then $A = \Fq[\theta]$. Furthermore, $h_k = 1$, $d_\infty = 1$, and $H_\fre^* = H_\fre = k$. We consider the Carlitz module
\[
\mathcal C \colon A \lra k\{\tau\}, \quad \theta \mapsto \mathcal C(\theta) = \theta\tau^0 + \tau^1,
\]
which is sgn--normalized with respect to the unique sign function satisfying ${\rm sgn}(1/\theta)=1.$ 
 All data in the following refers to $\rho = \mathcal C$.\\

For each $\frm \ne \fre$ we have 
\[
\Gal(H^\ast_\frm/k) \simeq \left( A/\frm \right)^\times, \quad \Gal(H_\frm/k) \simeq \left( A/\frm \right)^\times / \Fq^\times, \quad \Gal(H_\frm^\ast/H_\frm)\simeq \Bbb F_q^\times=\Bbb F_\infty^\times, 
\]
and the subgroup $\Fq^\times \hookrightarrow  \left( A/\frm \right)^\times$ identifies with the decomposition subgroup 
at $\infty$ which also
equals the ramification subgroup at $v_\infty$ for the extension $H_\frm^\ast/k$. \\

We fix a prime $\frp$ of $A$ of degree $d = d_\frp$ and consider the fields $L^\ast_n := H^\ast_{\frp^{n+1}}$ for $n \ge 0$.
Then
\begin{equation}\label{decomp of group algebra}
G_n^\ast := \Gal(L_n^\ast/k) = \Delta^\ast \times \Gamma_n \simeq \left( A/\frp^{n+1} \right)^\times \simeq 
\left( A/\frp \right)^\times \times \frac{U_{k_\frp}^{(1)}}{U_{k_\frp}^{(n+1)}},
\end{equation}
where $\Delta^\ast \simeq \Gal(L_0^\ast/k) \simeq \left( A/\frp \right)^\times$ is cyclic of order $q^d-1$ and 
$\Gamma_n \simeq  \Gal(L^\ast_n/L^\ast_0) \simeq U_{k_\frp}^{(1)}/U_{k_\frp}^{(n+1)}$ is the $p$-Sylow subgroup of $G^\ast_n$.\\

The extension $L^\ast_n/k$ is unramified outside $\{v_\infty, \frp\}$, totally ramified at $\frp$ and tamely ramified of ramification degree $(q-1)$ at $v_\infty$.
More precisely, the decomposition field at $v_\infty$ is $L_n := H_{\frp^{n+1}}$ and $L^\ast_n/L_n$ is totally ramified of degree $(q-1)$.\\

Hence, the extension $L^\ast_\infty/k$  is also unramified outside $\{v_\infty, \frp\}$, 
totally ramified at $\frp$ and tamely ramified of degree $(q-1)$ at $v_\infty$.
More precisely, the decomposition field at $v_\infty$ is $L_\infty := \cup_n H_{\frp^n}$ and $L^\ast_\infty/L_\infty$ 
is totally ramified at $v_\infty$ of degree $(q-1)$.  We have
\[
G_\infty^\ast := \Gal(L_\infty^\ast/k) = \Delta^\ast \times \Gamma_\infty \text, \quad \Gamma_\infty\simeq U_{k_\frp}^{(1)}.
\]
Here the isomorphism $\Gamma_\infty \simeq U_{k_\frp}^{(1)}$ is induced by the $\frp$-cyclotomic character 
\[
\kappa \colon G_\infty \lra U_{k_\frp}^{},
\]
which is defined as follows. We write $A_\frp$ for the completion of $A$ at $\frp$, so that $A_\frp$ identifies with
the valuation ring of $k_\frp$, in particular, $A_\frp^\times = U_{k_\frp}$. Then,  $\phi$ can be uniquely extended to a formal Drinfeld module
(see \cite{Rosen03})
\[
\widehat{\mathcal C} \colon A_\frp \lra A_\frp \{\{\tau \}\}.
\]
Then, for any $\sigma \in G_\infty$ the value $\kappa(\sigma)$ is determined by the equality
\[
\sigma(\epsilon) = \widehat{\mathcal C}_{\kappa(\sigma)}(\epsilon) \text{ for all } \epsilon \in \mathcal C[\frp^\infty].
\]

Finally, we note that
\[
G_\infty := \Gal(L_\infty/k) = \Delta \times \Gamma_\infty, 
\]
where $\Delta  := \Delta^\ast / \Fq^\times\simeq (A/\frp)^\times/\Bbb F_q^\times$.

\section{Equivariant main conjectures in positive characteristic}

\subsection{Review of the work of Greither and Popescu}
\label{GP review}

In what follows, if $G$ is a finite, abelian group and $F$ is a field of characteristic $0$,  we denote by $\widehat G(F)$ the set of equivalence classes of the $\overline F$--valued characters $\chi$ of $G$, with respect to the equivalence relation 
$\chi\sim\chi' $ if there exists $\sigma\in G(\overline F/F)$, such that $\chi'=\sigma\circ\chi$. 

If $R$ is a commutative ring and $M$ is a finitely presented $R$--module, we let ${\rm Fitt}_R(M)$ denote the $0$--th Fitting ideal of $M$. For the definitions and relevant properties 
of Fitting ideals needed in this context, the reader may consult \cite{GP12}.

We let $K/k$ denote an abelian extension of characteristic $p$ global fields, of Galois group $G$. We assume that $\Fq$ is the exact field of 
constants of $k$ (but not necessarily of $K$). Let $X \lra Y$ be the corresponding $G$-Galois cover of smooth projective curves defined over
$\Fq$. Let $S$ and $\Sigma$ be two finite,  non-empty, disjoint sets of closed points of $Y$, such that $S$ contains
the set $S_{\mathrm{ram}}$ of points which ramify in $X$. We let $\overline{\Bbb F_q}$ denote the algebraic closure of $\Fq$ and set
$\bar{X} := X \times_{\Fq} \overline{\Bbb F_q}$, $\bar{Y} := Y \times_{\Fq} \overline{\Bbb F_q}$. Also, $\bar{S}$ and $\bar{\Sigma}$ denote the set of points on $\bar{X}$ sitting
above points of $S$ and $\Sigma$, respectively. 

For every unramified closed point $v$ on $Y$ we denote by $G_v$ and $\sigma_v$ the decomposition group and the Frobenius automorphism
associated to $v$. As before, we write $d_v$ for the residual degree over $\Fq$ and we let $Nv := q^{d_v} = | \F_{q^{d_v}} |$ denote the 
cardinality of the residue field associated to $v$.

To the set of data $(K/k, \Fq, S, \Sigma)$, one can associate a polynomial equivariant $L$-function
\begin{equation}
\label{eq L poly}
\Theta_{S, \Sigma}(u) := \prod_{v \in \Sigma}\left( 1 - \sigma_v^{-1} \cdot (qu)^{d_v} \right) \cdot
\prod_{v \not\in S}\left( 1 - \sigma_v^{-1} \cdot u^{d_v} \right)^{-1}.
\end{equation}
The infinite product on the right is taken over all closed points in $Y$ which are not in $S$. This product converges in $\ZG[[u]]$ and in fact
it converges to an element in the polynomial ring  $\ZG[u]$. We recall the link between $\Theta_{S, \Sigma}(u)$ and classical Artin $L$-functions.
For every complex valued irreducible character $\chi$ of $G$ we let $L_{S, \Sigma}$ denote the $(S, \Sigma)$-modified Artin $L$-function
associated to $\chi$. This is the unique holomorphic function of the complex variable $s$ satisfying the equality
\begin{equation}\label{poly versus L fct}
L_{S, \Sigma}(\chi, s) =   \prod_{v \in \Sigma}\left( 1 - \chi(\sigma_v)Nv^{1-s} \right) \cdot
\prod_{v \not\in S}\left( 1 - \chi(\sigma_v) (Nv)^{-s} \right)^{-1} 
\end{equation}
for all  $s \in \Ce$ with  $\mathrm{Re}(s) > 1$. Then, for all $s \in \Ce$,
\begin{equation}\label{StickelArtin}
\Theta_{S, \Sigma}(q^{-s}) = \sum_{\chi \in \widehat{G}(\Bbb C)} L_{S, \Sigma}(\chi, s) e_{\chi^{-1}},
\end{equation}
where $e_\chi := 1/|G| \sum_{g \in G}\chi(g) g^{-1} \in \Ce[G]$ denotes the idempotent corresponding to $\chi$.

We denote by $M_{\bar{S}, \bar{\Sigma}}$ the Picard $1$-motive associated to the set of data $(\bar{X}, \overline\Fq, \bar{S}, \bar{\Sigma})$,
see \cite[Def.~2.3]{GP12} for the definition. For a prime number $\ell$ we consider the $\ell$-adic Tate module (or $\ell$-adic
realization ) $T_\ell(M_{\bar{S}, \bar{\Sigma}})$, see \cite[Def.~2.6]{GP12}, endowed with the usual $\Zl[[G\times\Gamma]]$-module structure,
where $\Gamma := G(\overline\Fq / \Fq)$. Recall that $\Gamma$ is isomorphic to the profinite completion $\widehat{\Bbb Z}$ of $\Bbb Z$ and has a natural topological generator $\gamma$ given by the $q$-power
arithmetic Frobenius automorphism.

The main result of Section 4 of \cite{GP12} is the following.

\begin{theorem}[Greither--Popescu] \label{GP Th.4.3} The following hold for all prime numbers $\ell$.
  \begin{itemize}
  \item [(1)] The $\ZlG$-module $T_\ell(M_{\bar{S}, \bar{\Sigma}})$ is projective.
  \item [(2)] We have an equality of $\Zl[[G\times\Gamma]]$-ideals
\[
\left( \Theta_{S, \Sigma}(\gamma^{-1}) \right)  =  \Fitt_{\Zl[[G\times \Gamma]]}(T_\ell(M_{\bar{S}, \bar{\Sigma}})).
\]
  \end{itemize}

\end{theorem}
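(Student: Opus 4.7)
The plan is to reduce both statements to properties of $\ell$-adic cohomology of the cover $\bar{X} \to \bar{Y}$, where the trace formula provides the bridge between the analytic object $\Theta_{S,\Sigma}(u)$ and the algebraic object $T_\ell(M_{\bar{S},\bar{\Sigma}})$. The overall strategy is to produce a natural short exact sequence of $\Zl[[G\times\Gamma]]$-modules expressing $T_\ell(M_{\bar{S},\bar{\Sigma}})$ in terms of the $\ell$-adic Tate module of a suitable Jacobian (the $\bar{\Sigma}$-smoothed Jacobian of $\bar{X}$) and of permutation modules coming from the divisor data supported on $\bar{S}$, and then to attack (1) and (2) simultaneously using this presentation.

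For part (1), the first step is to exhibit $T_\ell(M_{\bar{S},\bar{\Sigma}})$ in a short exact sequence
\[
0 \lra V_\ell \lra T_\ell(M_{\bar{S},\bar{\Sigma}}) \lra W_\ell \lra 0,
\]
where $W_\ell$ is a $\Zl[G]$-permutation module built from divisors supported on $\bar{S}\setminus\{v_0\}$ (for some auxiliary base point) and $V_\ell$ is the $\ell$-adic realization of the Jacobian part. The module $W_\ell$ is manifestly cohomologically trivial over $G$, since it is induced from decomposition groups at places in $\bar{S}$. For $V_\ell$, I would identify it with a subquotient of $H^1_{\mathrm{\acute{e}t}}(\bar{X}\setminus \bar{S}, \Zl)$, where the Hochschild--Serre spectral sequence, combined with the fact that $\bar{X}\to\bar{Y}$ is étale outside $\bar{S}$, collapses to yield cohomological triviality of $V_\ell$ as a $G$-module. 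By a standard argument (finite $\Zl$-rank plus cohomological triviality implies projectivity over $\ZlG$), this gives projectivity of $T_\ell(M_{\bar{S},\bar{\Sigma}})$.

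For part (2), I would invoke the Grothendieck--Lefschetz trace formula: for each complex irreducible character $\chi$ of $G$, one has
\[
L_{S,\Sigma}(\chi^{-1},s) \;=\; \det\bigl(1 - \gamma \cdot u \,\big|\, T_\ell(M_{\bar{S},\bar{\Sigma}})^{e_{\chi}}\bigr)\Big|_{u = q^{-s}},
\]
which, via the identity (\ref{StickelArtin}) relating $\Theta_{S,\Sigma}(u)$ to the packet of $L_{S,\Sigma}(\chi,s)$'s, gives $\Theta_{S,\Sigma}(\gamma^{-1}) = \det_{\ZlG}\bigl(1 - \gamma \mid T_\ell(M_{\bar S, \bar\Sigma})\bigr)$ in $\Zl[[G\times\Gamma]]$. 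Because $T_\ell(M_{\bar{S},\bar{\Sigma}})$ is $\ZlG$-projective and its $\Gamma$-action makes $(1-\gamma)$ essentially act as an endomorphism with trivial kernel and cokernel isomorphic to the whole module, one obtains a short free resolution of $T_\ell(M_{\bar{S},\bar{\Sigma}})$ as a $\Zl[[G\times\Gamma]]$-module whose presentation matrix has determinant $\Theta_{S,\Sigma}(\gamma^{-1})$. By the standard characterization of the Fitting ideal via determinants of square presentation matrices, this yields the equality claimed in (2).

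The principal obstacle, I expect, is the case $\ell = p$. For $\ell \neq p$, the identification of $V_\ell$ with ordinary $\ell$-adic étale cohomology is standard, and Weil's theorem immediately provides the trace formula. For $\ell = p$, however, $H^1_{\mathrm{\acute{e}t}}(\bar{X},\Zp)$ does not compute the $p$-adic Tate module of the Jacobian; one must instead work with the flat cohomology of the finite flat group schemes $J[p^n]$ (or equivalently the $p$-divisible group $J[p^\infty]$ of the Jacobian), and one has to verify that the cohomological triviality argument in part (1) as well as the trace formula computation in part (2) survive this replacement. Establishing these two points in characteristic $p$ — in particular, controlling the étale versus connected parts of the $p$-divisible group equivariantly — is the delicate technical heart of the proof.
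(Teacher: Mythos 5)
This theorem is quoted in the paper from \cite{GP12} (it is Theorem 4.3 there), so the ``paper's proof'' to compare against is the one in that reference. Your high-level plan does match the structure of the Greither--Popescu argument: decompose $T_\ell(M_{\bar S,\bar\Sigma})$ via a short exact sequence into a semi-abelian (Tate-module) part and a divisor-lattice part, prove $G$-cohomological triviality to get $\Zl[G]$-projectivity, decompose by characters to pass from ``projective'' to ``free over local rings,'' and then compute the Fitting ideal as a determinant of $1-\gamma^{-1}A_\gamma$ related to the equivariant $L$-function.

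There are, however, two genuine gaps. In part (1), you assert that $W_\ell$ (built from $\bar S$-supported divisors) is ``manifestly cohomologically trivial'' because it is induced from decomposition groups, and that Hochschild--Serre ``collapses'' for the Jacobian part because the cover is \'etale outside $\bar S$. Neither claim holds as stated: $S$ contains all ramified primes, so the divisor module is a direct sum of modules of the form $\Zl[G/I_w]$ with non-trivial inertia $I_w$, and such permutation modules are \emph{not} cohomologically trivial when $\ell$ divides $|I_w|$. Likewise the semi-abelian piece is not trivially CT. In \cite{GP12} the cohomological triviality of the \emph{extension} $T_\ell(M_{\bar S,\bar\Sigma})$ is established by a careful computation in Tate cohomology exploiting the $\Sigma$-smoothing (the torus part of the $1$-motive cancels the obstructions from the lattice part for $\ell\neq p$), and by a quite different, $p$-specific argument when $\ell=p$ (where the torus contributes nothing to $T_p$). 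Simply splitting the module and observing each piece is CT does not work.

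In part (2), you claim the trace formula gives the on-the-nose equality $\Theta_{S,\Sigma}(\gamma^{-1})=\det_{\ZlG}\!\bigl(1-\gamma\mid T_\ell(M_{\bar S,\bar\Sigma})\bigr)$. This is not what \cite{GP12} proves, and it is false in general for $\ell=p$: the $\Zp$-rank of $T_p(M_{\bar S,\bar\Sigma})$ equals the $p$-rank (unit-root part) of the relevant semi-abelian variety, which is strictly smaller than the degree of the polynomial $\Theta_{S,\Sigma}(u)$. What \cite[Prop.~4.8, 4.10]{GP12} actually provide is a factorization $\Theta^\chi_{S,\Sigma}(u)=P^\chi(u)\,Q^\chi(u)$ with $P^\chi(u)=\det\bigl(1-\gamma u\mid T_\ell(M)^\chi\bigr)$ and with $Q^\chi(\gamma^{-1})$ a unit in $\Zl(\chi)[G_\ell][[\Gamma]]$; the Fitting-ideal equality follows from this factorization, not from a literal trace-formula identity. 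Showing that $Q^\chi(\gamma^{-1})$ is a unit is precisely where the slope/$p$-adic absolute value estimates enter for $\ell=p$ (the ``missing'' Frobenius eigenvalues are non-units, and $1-\alpha\gamma^{-1}$ is invertible when $|\alpha|_p<1$). You correctly flag $\ell=p$ as the technical heart, but the concrete obstruction --- that the naive determinant identity fails and a unit cofactor must be extracted and controlled --- is missing from your argument.
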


\begin{remark}\label{p-no Sigma-remark}
  By \cite[Rem.~2.7]{GP12} we have $T_p(M_{\bar{S}, \bar{\Sigma}}) = T_p(M_{\bar{S}, \emptyset})$. This is in accordance with
the fact that the product of Euler factors
$$
 \prod_{v \in \Sigma}\left( 1 - \sigma_v^{-1} \cdot (q\gamma^{-1})^{d_v} \right) 
$$
is a unit in $\Zl[[G \times \Gamma]]$. Indeed, from
\[
\sum_{n=0}^{N-1} \left( \sigma_v^{-1} (qu)^{d_v} \right)^n = \frac{1 - \left( \sigma_v^{-1} (qu)^{d_v} \right)^N}
{1 - \left( \sigma_v^{-1} (qu)^{d_v} \right)}
\]
and $\lim\limits_{N\ra\infty}\left(1 - \left( \sigma_v^{-1} (qu)^{d_v} \right)^N \right) = 1$ in $\Bbb Z_p[G][[u]]$, we see that
\[
\frac{1}{1 - \left( \sigma_v^{-1} (qu)^{d_v} \right)} = \sum_{n=0}^{\infty} \left( \sigma_v^{-1} (qu)^{d_v} \right)^n \in \Zp[G][[u]].
\]
\end{remark}

\subsection{The main results (Geometric Equivariant Main Conjectures)}

We fix a prime ideal $\frp$ and an integral ideal $\frf$ of $A$, such that $\frp \nmid \frf$. We will consider the tower of fields $H_{\frf\frp^{n+1}}/k$, for $n \ge 0$. (See \S2.2 and field diagram \eqref{field-diagram-2}.) 
The definition of the real ray--class fields $H_{\frf\frp^{n}}$ implies that we have
$$H_{\frf\frp^{n}} \cap \bar\F_q = \F_{q^{d_\infty}}, \qquad \bar\F_q H_\frf \cap H_{\frf\frp^{n}} = H_\frf,$$
for all $n\geq 0$. Consequently, we have the following perfect field diagram.

\begin{equation}\label{field-diagram-3}
\xymatrix{
&&& \calL_n := \bar\F_q H_{\frf\frp^{n+1}} \ar@{-}[dd] \ar@{-}[ddll]\\
&&&\\
& L_n := H_{\frf\frp^{n+1}} \ar@{-}[dd]  \ar@{-}@{.}@/_2pc/[ddddl]_{G_n} & & \calL_0 := \bar\F_q H_{\frf\frp}  \ar@{-}[d] \ar@{-}[ddll] \\
&&& \calK := \bar\F_q k \ar@{-}[d] \ar@{-}[ddll] \\
& L_0 := H_{\frf\frp} \ar@{-}[d]  \ar@{-}@{.}@/_1pc/[ddl]_{G_0} & & \kappa := \bar\F_q  \ar@{-}[ddll] \ar@{-}@{.}@/^1pc/[dddlll]^{\Gamma}\\
& E := L_0 \cap \calK \ar@{-}[d] \ar@{-}[dl] & & \\
k \ar@{-}[d] & \F_{q^{d_\infty}} \ar@{-}[dl] && \\
\Fq  &&&
}
\end{equation}

As in Subsection \ref{GP review} we let $S$ and $\Sigma$ be two finite, non-empty, disjoint sets of closed points of the smooth, projective curve
$Y$ corresponding to $k$, such that $S$ contains the set $S_{\mathrm{ram}}$ of points which ramify in $H_{\frf\frp^{n+1}}$. 
Note that this condition does not depend on $n$.

We write $\Theta_{S, \Sigma}^{(n)}(u) \in \Ze[G_n][u]$ for the equivariant $L$-function attached to 
$(L_n/k, \Fq, S, \Sigma)$
in (\ref{eq L poly}). We let $L_\infty:=\cup_n L_n$ and $G_\infty:={\rm Gal}(L_\infty/k)$. The next lemma shows that the following is well defined.
\[
\Theta_{S, \Sigma}^{(\infty)}(u) := \varprojlim_{n}   \Theta_{S, \Sigma}^{(n)}(u) \in \Zp[[G_\infty]][[u]].
\]

\begin{lemma}\label{Theta functoriality}
  Let $L/k$ be a finite  abelian extension with Galois group $G := \Gal(L/k)$. Let $K/k$ be a subextension with $H := \Gal(L/K)$.
We write 
$$\Theta_{S, \Sigma, L/k}(u) \in \Zp[G][u], \quad \Theta_{S, \Sigma, K/k}(u) \in \Zp[G/H][u]$$ 
for the equivariant $L$-functions
attached to the data $(L/k, \Fq, S, \Sigma)$ and  $(K/k, \Fq, S, \Sigma)$, respectively. Then the canonical map $\Zp[G][u] \lra \Zp[G/H][u]$ sends
$\Theta_{S, \Sigma, L/k}(u)$ to $\Theta_{S, \Sigma, K/k}(u)$.
\end{lemma}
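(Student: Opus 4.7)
The plan is to reduce the lemma to the two basic functoriality properties of Frobenius elements and to the compatibility of the canonical projection $\pi \colon \Zp[G][u] \lra \Zp[G/H][u]$ with (convergent) Euler products. First I would fix an unramified closed point $v$ of $Y$ (which is automatic for $v \notin S$, since $S$ contains the ramification locus of $L/k$ and a fortiori that of $K/k$, and also for $v \in \Sigma$, since $\Sigma \cap S = \emptyset$). Under the natural surjection $G \twoheadrightarrow G/H$ one has $\sigma_v^{L/k} \mapsto \sigma_v^{K/k}$: this is the standard compatibility of Frobenius elements under restriction of abelian extensions, coming from the surjection of decomposition subgroups $G_v(L/k) \twoheadrightarrow G_v(K/k)$ and the fact that both Frobenii lift the $q^{d_v}$-power map on the residue field of $v$. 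The degree $d_v$ is the degree of $v$ as a point of $Y$ over $\F_q$ and is therefore the same in both settings.

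Combining these two observations, for $v \notin S$ one has
\[
\pi\bigl(1 - (\sigma_v^{L/k})^{-1}\cdot u^{d_v}\bigr) \;=\; 1 - (\sigma_v^{K/k})^{-1}\cdot u^{d_v},
\]
and similarly $\pi\bigl(1 - (\sigma_v^{L/k})^{-1}(qu)^{d_v}\bigr) = 1 - (\sigma_v^{K/k})^{-1}(qu)^{d_v}$ for $v \in \Sigma$. The $\Sigma$-smoothing factor is a \emph{finite} product, so $\pi$ sends it factor-by-factor to the analogous product defining $\Theta_{S,\Sigma,K/k}^{\Sigma}(u)$.

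For the infinite Euler product $\prod_{v \notin S}(1 - \sigma_v^{-1}u^{d_v})^{-1}$, which a priori lives in $\Zp[G][[u]]$, I would invoke continuity of $\pi$ with respect to the $u$-adic topology: for every $N \ge 1$, only the finitely many primes with $d_v \le N$ contribute mod $u^{N+1}$, and the partial product in $\Zp[G][u]/(u^{N+1})$ maps under $\pi$ to the corresponding partial product in $\Zp[G/H][u]/(u^{N+1})$ by the pointwise identity above. Passing to the limit $N \to \infty$ and using that both total products actually lie in the polynomial rings (as noted right after the definition of $\Theta_{S,\Sigma}(u)$), the equality of all $u$-adic coefficients gives $\pi(\Theta_{S,\Sigma,L/k}(u)) = \Theta_{S,\Sigma,K/k}(u)$.

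There is no real obstacle here: the statement is pure functoriality of Artin $L$--values packaged into the equivariant polynomial $\Theta_{S,\Sigma}$. The only point requiring a word of care is the justification that $\pi$ may be swapped with the infinite Euler product, which is handled by the continuity/truncation argument just sketched. This also explains why the projective system $(\Theta_{S,\Sigma}^{(n)}(u))_n$ used in the subsequent definition of $\Theta_{S,\Sigma}^{(\infty)}(u) \in \Zp[[G_\infty]][[u]]$ is indeed compatible and the inverse limit is well defined.
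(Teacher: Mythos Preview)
Your proof is correct but takes a different route from the paper's. You argue directly from the Euler-product definition \eqref{eq L poly}: the projection $\pi\colon G\to G/H$ sends $\sigma_v^{L/k}$ to $\sigma_v^{K/k}$, the degree $d_v$ is intrinsic to $Y$, and hence each Euler factor maps to the corresponding one; the $u$-adic continuity argument then handles the infinite product cleanly. The paper instead passes through the complex Artin $L$-function interpretation \eqref{StickelArtin}: it computes $\pi(e_\chi)$ for each $\chi\in\widehat G$, invokes inflation invariance of $L_{S,\Sigma}(\chi,s)$, obtains $\pi(\Theta_{S,\Sigma,L/k})(q^{-s})=\Theta_{S,\Sigma,K/k}(q^{-s})$ for all $s\in\Ce$, and deduces equality of the polynomials in $u$.

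Your approach is more elementary and self-contained---it needs only the compatibility of Frobenius under restriction and a truncation argument, avoiding any detour through $\Ce$-valued $L$-functions and the (easy but unstated) fact that two polynomials in $\Ce[G/H][u]$ agreeing at all $u=q^{-s}$ must coincide. The paper's approach, on the other hand, makes the link to classical inflation invariance of Artin $L$-functions explicit, which is conceptually natural given how $\Theta_{S,\Sigma}$ is used later. Both are short and routine; yours is arguably the more direct verification of the claim as stated.
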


\begin{proof}
We write $\pi$ for the canonical map $G \lra G/H$ and also for any map which is naturally induced by $\pi$. 
It is straightforward to verify that for any character $\chi \in \hat G$ one has 
\[
\pi(e_\chi) =
\begin{cases}
  e_\psi, & \text{if } \chi|_H = 1 \text{ and } \chi = \inf_{G/H}^G(\psi), \\
  0, & \text{if } \chi|_H \ne 1.
\end{cases}
\]
Hence, by the inflation invariance of $(S, \Sigma)$-modified Artin $L$-functions, we obtain
\begin{eqnarray*}
  \pi\left( \sum_{\chi \in \hat{G}}L_{S, \Sigma}(\chi, s) e_{\chi^{-1}}\right) &=& 
 \sum_{\psi \in \widehat{G/H}}L_{S, \Sigma}(\psi, s) e_{\psi^{-1}}.
\end{eqnarray*}
It follows that $(\pi(\Theta_{S, \Sigma, L/k}))(q^{-s}) = \Theta_{S, \Sigma, K/k}(q^{-s}) $ for all $s \in \Ce$, and hence we also have 
$\pi(\Theta_{S, \Sigma, L/k}(u)) = \Theta_{S, \Sigma, K/k}(u)$
by \eqref{StickelArtin}.
\end{proof}

We let $X_n \lra Y$ denote the $G_n$-Galois cover of smooth, projective curves defined over $\Fq$ corresponding to $L_n / k$.
We write $\overline{X}_n := X_n \times_{\Fq} \overline\Fq$, $\bar{Y} := Y \times_{\Fq} \overline\Fq$, and also $\bar{S}_n$ for the set of points of $\bar{X}_n$ above
points of $S$. We let $\MSl$ be the Picard $1$-motive associated with the set of data $(\bar{X}_n, \overline\Fq, \bar{S}_n, \emptyset)$ and write
$\Tpl := T_p(\MSl)$ for the $p$-adic Tate module of $\MSl$.  Then, $T_p^{(n)}$ is endowed with a natural structure of $\Bbb  Z_p[G_n][[\Gamma]]$--module. (See \cite[Sec.~3]{GP12}.)

Now, since $\Bbb F_{q^{d_\infty}}$ is the exact field of constants of $L_n$, $\overline X_n$ is going to have $d_\infty$ connected components, all isomorphic to 
$\overline X_n^c:=X_n\times_{\Bbb F_{q^{d\infty}}}\overline\Fq$. We let $T_{p,c}^{(n)}:=T_{p, c}(M_S^{(0)}):=T_p(M_S^{(n), c})$ denote the $p$-adic Tate module of the $1$--motive $M_S^{(n), c}$ associated to the data 
$(\bar{X}_n^c, \overline{\F_q}, \bar{S}_n\cap \overline{X}_n^c ,\, \emptyset)$.\\

For $m \ge n \ge 0$ we write $N_{m/n} \colon L_m^\times \lra L_n^\times$ for the field theoretic norm map and also view 
$N_{m/n} = \sum_{g \in \Gal(L_m/L_n)}g$ as an element of $\Zp[G_m]$. Galois restriction gives isomorphisms
\[
\Gal(\calL_m / \calL_n) \simeq \Gal(L_m / L_n).
\]

By the results of \cite[Sec.~3]{GP12} we have  a natural $\Bbb Z_p[G_{n+1}]$--equivariant, injective morphism
$T_p^{(n)}\hookrightarrow T_p^{(n+1)}.$  We may also consider the norm map
\[
N_{n+1/n}:\, T_p^{(n+1)} \lra \left( T_p^{(n+1)} \right)^{\Gal(L_{n+1}/L_n)}.
\]
By \cite[Th.~3.10]{GP12} this norm map is surjective and, moreover, it is an almost formal consequence of \cite[Th.~3.1]{GP12}
to show that, under the canonical injective morphism  $T_p^{(n)}\hookrightarrow T_p^{(n+1)}$, we can identify the following $\Bbb Z_p[G_n]$--modules
\begin{equation}\label{Tp invariants}
\left( T_p^{(n+1)} \right)^{\Gal(L_{n+1}/L_n)} = \Tpl. 
\end{equation}
Indeed, we recall the diagram of fields above and set for $n \ge 0$
\[
H_n := \Gal(L_n / E).
\]
Then we have natural isomorphisms of $\Bbb Z_p[G_n]$--modules  (see \cite[proof of Th.~3.10]{GP12})
\[
T_p^{(n)} \simeq T_{p,c}^{(n)} \tensor_{\Zp[H_n]} \Zp[G_n].
\]
Therefore, we have the following 
\begin{eqnarray*}
  \left( T_p^{(n+1)} \right)^{\Gal(L_{n+1} / L_n)} &\simeq& 
\left( T_{p,c}^{(n+1)} \tensor_{\Zp[H_{n+1}]} \Zp[G_{n+1}] \right)^{\Gal(L_{n+1} / L_n)} \\
&\stackrel{(*)}=& \left( T_{p,c}^{(n+1)} \right)^{\Gal(L_{n+1} / L_n)}  \tensor_{\Zp[H_{n}]} \Zp[G_{n}] \\
&\stackrel{(**)}=& \left( T_{p,c}^{(n)} \right)  \tensor_{\Zp[H_{n}]} \Zp[G_{n}] \\
&\simeq& T_p^{(n)},
\end{eqnarray*}
where $(*)$ is easy to verify using the fact that $T_{p,c}^{(n+1)}$ is $\Zp[H_{n+1}]$-projective by Theorem \ref{GP Th.4.3}
and $(**)$ is the result of  \cite[Th.~3.1]{GP12}.

\begin{definition}
  We define the Iwasawa--type algebras
  \[
\Lambda_n:=\Bbb Z_p[[G_n\times \Gamma]], \qquad \Lambda := \varprojlim_n \Lambda_n= \Zp[[G_\infty \times \Gamma]],
\]
and  consider the $\Lambda$--module defined by 
\[
T_p(M_S^{(\infty)}) := \varprojlim_n T_p(\MSl),
\]
where the projective limit is taken with respect to norm maps.
\end{definition}
\medskip

The rest of this section is devoted to the proof of Theorem \ref{EMC-I-intro} which we recall for the reader's convenience. 

\begin{theorem}[EMC-I]\label{limit theorem 1} Let $S$ and $\Sigma$ be as above. Then the $\Lambda$--module 
$T_p(M_S^{(\infty)})$ is  finitely generated and torsion and the following hold.
\begin{enumerate}
\item ${\rm pd}_\Lambda \left(T_p(M_S^{(\infty)})\right)=1$.
\item 
$\Fitt_\Lambda\left( T_p(M_S^{(\infty)}) \right) = \Theta_{S, \Sigma}^{(\infty)}(\gamma^{-1}) \cdot \Lambda.$
\end{enumerate}
\end{theorem}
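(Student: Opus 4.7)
The plan is to derive Theorem \ref{limit theorem 1} from the finite-level result of Greither--Popescu (Theorem \ref{GP Th.4.3}) by constructing compatible square presentations at every finite level $n$ and passing to the inverse limit. At level $n$, part (1) of Theorem \ref{GP Th.4.3} says that $T_p^{(n)}$ is $\Zp[G_n]$-projective of some finite rank $r_n$, and part (2) computes $\Fitt_{\Lambda_n}(T_p^{(n)}) = (\Theta^{(n)}_{S,\Sigma}(\gamma^{-1}))$. Choosing a projective generator over $\Zp[G_n]$, the action of the topological generator $\gamma \in \Gamma$ is represented by a matrix $M_n$ with entries in $\Zp[G_n]$, yielding a square presentation of $\Lambda_n$-modules
\[
0 \longrightarrow P_n \xrightarrow{\,\gamma - M_n\,} P_n \longrightarrow T_p^{(n)} \longrightarrow 0,
\]
with $P_n$ a finitely generated projective $\Lambda_n$-module and $\det(\gamma - M_n)$ generating $\Fitt_{\Lambda_n}(T_p^{(n)})$, hence equal to $\Theta^{(n)}_{S,\Sigma}(\gamma^{-1})$ up to a unit in $\Lambda_n^\times$.

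The next step is to exploit norm-compatibility of the inverse system $(T_p^{(n)})_n$. By \cite[Thm.~3.10]{GP12} together with the identification \eqref{Tp invariants}, the norm maps $T_p^{(n+1)} \twoheadrightarrow T_p^{(n)}$ are surjective and their kernels coincide with the augmentation action of $\mathrm{G}(L_{n+1}/L_n)$ on $T_p^{(n+1)}$; combined with Lemma \ref{Theta functoriality}, this allows one to choose the projective generators and matrices $M_n$ compatibly across levels, so that $P_{n+1}\otimes_{\Lambda_{n+1}}\Lambda_n \simeq P_n$ and $M_{n+1}\mapsto M_n$ under $\Lambda_{n+1}\twoheadrightarrow \Lambda_n$. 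Passing to the inverse limit---using $\Lambda = \varprojlim_n \Lambda_n$ and the vanishing of $\varprojlim^{1}$ for this surjective system of compact $\Lambda_n$-modules---yields an exact sequence of $\Lambda$-modules
\[
0 \longrightarrow P_\infty \xrightarrow{\,\gamma - M_\infty\,} P_\infty \longrightarrow T_p^{(\infty)} \longrightarrow 0,
\]
where $P_\infty := \varprojlim_n P_n$ is a finitely generated projective $\Lambda$-module. This establishes assertion (1), the finite generation of $T_p^{(\infty)}$ over $\Lambda$, and its torsion property (since $\gamma - M_\infty$ is a non-zero-divisor on $P_\infty$).

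For part (2), the square presentation above gives $\Fitt_\Lambda(T_p^{(\infty)}) = (\det(\gamma - M_\infty))$. Under each quotient $\Lambda \twoheadrightarrow \Lambda_n$, this ideal maps onto $(\det(\gamma - M_n)) = (\Theta_{S,\Sigma}^{(n)}(\gamma^{-1}))$. Since $\Theta^{(\infty)}_{S,\Sigma}(\gamma^{-1}) = \varprojlim_n \Theta^{(n)}_{S,\Sigma}(\gamma^{-1})$ by definition and $\Lambda$ is complete in the topology defined by the kernels of the projections $\Lambda \twoheadrightarrow \Lambda_n$, we conclude that $\det(\gamma - M_\infty) = u \cdot \Theta^{(\infty)}_{S,\Sigma}(\gamma^{-1})$ for some $u \in \Lambda^\times$, finishing the proof of (2).

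The main obstacle is the compatibility of the finite-level presentations. The $\Zp[G_n]$-rank $r_n$ of $T_p^{(n)}$ grows with $n$ (since the genus of $L_n$ does), so one cannot fix a single matrix size throughout the tower, and the $P_n$ will in general be non-free projective $\Lambda_n$-modules. Arranging $P_{n+1}\otimes_{\Lambda_{n+1}}\Lambda_n \simeq P_n$ compatibly with a choice of $\gamma$-action requires a careful inductive construction, for which one exploits the tensor decomposition $T_p^{(n)} \simeq T_{p,c}^{(n)} \otimes_{\Zp[H_n]} \Zp[G_n]$ underlying \eqref{Tp invariants}. A secondary difficulty is the non-Noetherianness of $\Lambda$ (cf.\ Proposition \ref{big-Iwasawa-algebra-prop}), which means that finite generation and $\mathrm{pd}_\Lambda = 1$ for the limit module cannot be read off from a routine Noetherian argument; one must combine topological Nakayama with the compactness of the $P_n$ to transfer these properties to the infinite level.
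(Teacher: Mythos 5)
Your overall strategy --- finite-level square presentations, made compatible across levels, then passed to the limit --- is indeed the paper's strategy. But there is a genuine gap in how you set it up, and several key technical steps are missing.

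\textbf{The rank issue is misdiagnosed, and the resolution is the heart of the argument.} You write that the $\Zp[G_n]$-rank of $T_p^{(n)}$ grows with $n$ (since the genus of $L_n$ does), so ``one cannot fix a single matrix size throughout the tower,'' and you treat this as the ``main obstacle.'' This is incorrect, and overlooking it is not a small matter. The paper first decomposes $G_n = P_n \times \Delta$ with $P_n$ the $p$-Sylow subgroup and $\Delta$ constant, hence $\Zp[G_n] \simeq \bigoplus_{\chi\in\widehat\Delta(\Qp)} \Zp(\chi)[P_n]$, where each factor is a \emph{local} ring. Projective modules over local rings are free, so $T_p(M_S^{(n)})^\chi$ is free over $\Zp(\chi)[P_n]$ of some rank $m_\chi^{(n)}$. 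The crucial Lemma~\ref{indep of l} then shows $m_\chi^{(n)} = m_\chi$ is \emph{independent of} $n$: the genus of $L_n$ grows only because $|P_n|$ grows, not because the rank over the (growing) local ring grows. This is precisely what makes it possible to choose bases inductively (diagram~\eqref{comm GP diagram}) and to pass a presentation of fixed matrix size $m_\chi$ through the limit. Without this observation, the construction you sketch does not go through, and the tensor decomposition $T_p^{(n)} \simeq T_{p,c}^{(n)} \otimes_{\Zp[H_n]}\Zp[G_n]$ alone (which you cite) does not rescue it: it enters the paper's proof of~\eqref{Tp invariants}, not as a substitute for the $\chi$-decomposition into free modules over local rings.

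\textbf{Non-zero-divisor statements are load-bearing and unaddressed.} You assert that $\gamma - M_n$ and $\gamma - M_\infty$ are injective (hence the presentations are exact), that $T_p^{(\infty)}$ is torsion, and that the Fitting ideals at level $n$ cohere to the ideal generated by $\Theta_{S,\Sigma}^{(\infty)}(\gamma^{-1})$ at level $\infty$. All of these hinge on knowing that $\det(1 - \gamma^{-1}A_\gamma^{(n),\chi})$ and $\Theta_{S,\Sigma}^{(n)}(\gamma^{-1})$ are non-zero-divisors in $R_n^\chi[[\Gamma]]$, and that $\Theta_{S,\Sigma}^{(\infty)}(\gamma^{-1})$ is a non-zero-divisor in $\Lambda$. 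The paper proves this via Lemma~\ref{nzd polynomials} (polynomials in $\gamma$ with unit leading coefficient are non-zero-divisors in $R[[\Gamma]]$, proved by mapping to $\bigoplus_\psi \Zp[\psi][[t]]$) and Corollary~\ref{non-zero divisors}; these are used to get exactness of~\eqref{Lambda-ell-chi-ses}, the torsion property via Lemma~\ref{torsion lemma}, and ${\rm pd} = 1$ exactly (not just $\le 1$). Your proposal never supplies any such argument.

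\textbf{The inverse limit of Fitting ideals needs a real lemma, not completeness alone.} You argue that since $\det(\gamma-M_n)$ generates $(\Theta_{S,\Sigma}^{(n)}(\gamma^{-1}))$ up to a unit $u_n \in \Lambda_n^\times$, and $\Lambda$ is complete, one gets $\det(\gamma-M_\infty) = u\cdot\Theta_{S,\Sigma}^{(\infty)}(\gamma^{-1})$ with $u\in\Lambda^\times$. There is no reason a priori for the units $u_n$ to cohere. The paper avoids this by working with ideals and invoking Lemma~\ref{lim of nzd}: if $\{\alpha_m\}$ is a coherent sequence of non-zero-divisors, then $\varprojlim_m (\alpha_m R_m) = \alpha_\infty R_\infty$. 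This step again requires the non-zero-divisor property mentioned above.

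In short: the high-level plan matches the paper, but the ``main obstacle'' you identify does not exist once one decomposes by characters of the prime-to-$p$ part and uses freeness over local group rings with $n$-independent rank, and the proposal omits the non-zero-divisor machinery that the exactness, torsion, and Fitting-ideal-limit assertions all rest on.
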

\medskip

The strategy of proof is as follows. First, we obtain a finitely generated, $\Lambda$--projective resolution of length $1$ for $T_p(M_S^{(\infty)})$, as a projective limit of certain $\Lambda_n$--projective resolutions of length $1$ for $T_p(M_S^{(n)})$, for $n\geq 0$, essentially constructed in
\cite{GP12}. This implies that $T_p(M_S^{(\infty)})$ is finitely generated and ${\rm pd}_{\Lambda}\leq 1$. 
Then, we use this construction further to show that 
\begin{equation}\label{lim fitt commute}
\Fitt_\Lambda\left(T_p(M_S^{(\infty)}) \right) =  
\varprojlim_n\, \Fitt_{\Lambda_n} \left( T_p(\MSl) \right).
\end{equation}
Next, we  show that $\Theta_{S, \Sigma}^{(n)}(\gamma^{-1})$ is a non-zero divisor in 
$\Lambda_n$, for all $n\geq 0$ and $\Theta_{S, \Sigma}^{(\infty)}(\gamma^{-1})$ is a non--zero divisor in $\Lambda$. (See Corollary \ref{non-zero divisors} below.)
When combined with Theorem \ref{GP Th.4.3}(2), equality \eqref{lim fitt commute} and Lemma \ref{lim of nzd} below, this leads to the equalities
\[
\Fitt_\Lambda\left( T_p(M_S^{(\infty)}) \right) = \varprojlim_n \left( \Theta_{S, \Sigma}^{(n)}(\gamma^{-1}) \cdot \Lambda_n\right)
= \Theta_{S, \Sigma}^{(\infty)} (\gamma^{-1}) \cdot \Lambda.
\]
Now, the fact that $T_p(M_S^{(\infty)})$ is $\Lambda$--torsion and of projective dimension exactly equal to $1$ follows from the following elementary result.

\begin{lemma}\label{torsion lemma}
Let $R$ be a commutative ring and $X$ a finitely generated $R$--module. Assume that  ${\rm Fitt}_R(X)$ contains a non--zero divisor $f\in R$. Then $X$ is a torsion $R$--module. Consequently, if $X$ is non--zero, then $X$ cannot be a submodule of a free $R$--module and therefore it cannot be $R$--projective.
\end{lemma}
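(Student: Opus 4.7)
The plan is to reduce everything to the standard inclusion
\[
  \mathrm{Fitt}_R(X) \sseq \mathrm{Ann}_R(X),
\]
which holds for any finitely generated $R$--module $X$ for which $\mathrm{Fitt}_R(X)$ is defined in the sense of \cite[\S4]{GP12}. First, I would recall (or re-derive) this inclusion from the definition: picking any presentation $R^{(I)} \xrightarrow{A} R^n \to X \to 0$, an $n\times n$ minor $d$ of $A$ satisfies $d \cdot \mathrm{id}_{R^n} = A \cdot \mathrm{adj}(A')$ on the corresponding $n\times n$ submatrix $A'$, so multiplication by $d$ on $R^n$ factors through the image of $A$. Passing to the quotient $X$ gives $d\cdot X = 0$, and summing over all such minors yields $\mathrm{Fitt}_R(X) \sseq \mathrm{Ann}_R(X)$.

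Granting that inclusion, the hypothesis $f \in \mathrm{Fitt}_R(X)$ immediately gives $f \cdot X = 0$. Since $f$ is a non--zero divisor in $R$, every $x \in X$ is killed by the non--zero divisor $f$, so by definition $X$ is an $R$--torsion module. This settles the first assertion.

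For the second assertion, suppose toward a contradiction that $X \ne 0$ and $X$ embeds in some free $R$--module $F = R^{(J)}$. Pick $0 \ne x \in X$ and write $x = (x_j)_{j\in J}$ with all but finitely many $x_j$ equal to zero. The equality $f\cdot x = 0$ in $F$ forces $f \cdot x_j = 0$ for every $j$, and since $f$ is a non--zero divisor in $R$ this means $x_j = 0$ for all $j$, i.e. $x = 0$, a contradiction. Hence $X = 0$, contrary to assumption; so a non--zero such $X$ cannot sit inside any free module. Since projective modules are direct summands (in particular submodules) of free $R$--modules, this also rules out $X$ being projective.

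The argument is essentially purely formal and there is no real obstacle; the only point requiring a little care is making sure the inclusion $\mathrm{Fitt}_R(X) \sseq \mathrm{Ann}_R(X)$ is available in the generality of the non-Noetherian Iwasawa algebras $\Lambda$ of interest, but since this is a formal consequence of the definition of $\mathrm{Fitt}_R$ via minors of a presentation matrix, no Noetherian hypothesis is needed.
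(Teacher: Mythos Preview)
Your proof is correct and follows exactly the same approach as the paper: the paper's proof consists of one line invoking the inclusion $\mathrm{Fitt}_R(X)\subseteq\mathrm{Ann}_R(X)$ to get $f\cdot X=0$, and leaves the remaining consequences implicit. Your write-up simply fills in the details (a sketch of why $\mathrm{Fitt}_R(X)\subseteq\mathrm{Ann}_R(X)$, and the explicit argument that a nonzero torsion module cannot sit inside a free module), but the strategy is identical.
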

\begin{proof}
From the well--known inclusion ${\rm Fitt}_R(X)\subseteq {\rm Ann}_R(X)$, we conclude that $f\cdot X=0$, which concludes the proof of the Lemma.
\end{proof}

\medskip

From now on, we let $P_n$ denote the Sylow $p$--subgroup of $G_n$ and $\Delta_n$ its complement, so that 
$G_n=P_n\times\Delta_n$, for all $n\geq 0$. Note that since $\Gamma_n=G(L_n/L_0)$ is a $p$--group, $\Delta:=\Delta_n$ does not depend on $n$. Consequently, for all $n\geq 0$, we have 
\begin{equation}\label{Pl Delta decomposition}
  G_n\simeq P_n\times\Delta, \qquad P_{n}/\Gamma_n\simeq P_0.
\end{equation}
Therefore, for all $n\geq 0$, we have an isomorphism of $\Bbb Z_p$--algebras
\[
\Zp[G_n] \simeq \bigoplus_{\chi \in \hat\Delta(\Qp)} \Zp(\chi)[P_n],
\]
given by the usual direct sum of $\chi$--evaluation maps  for $\chi\in\widehat\Delta$. Consequently, any $\Bbb Z_p[G_n]$--module $X$ splits naturally into a direct sum
\[
  X= \bigoplus_{\chi \in \hat\Delta(\Qp)} X^\chi, \qquad\text{where }X^\chi\simeq X\otimes_{\Bbb Z_p[G_n]}\Bbb Z_p(\chi)[P_n].
\]
Since $P_n$ is an abelian $p$--group, the rings $\Bbb Z_p(\chi)[P_n]$ are local rings, for all $n$ and $\chi$ as above.
Further, since projective modules over local rings are free, Theorem \ref{GP Th.4.3} and
\cite[Rem.~2.7]{GP12} imply that we have  isomorphisms of $\Zp(\chi)[P_n]$--module 
\begin{equation}\label{def mchil}
\TpMSl^\chi\simeq  \left( \Zp(\chi)[P_n] \right) ^{m_\chi^{(n)}},
\end{equation}
with integers $m_\chi^{(n)} \ge 0$, for all $\chi$ and $n$ as above.

\begin{lemma}\label{indep of l}
  The non-negative integers  ${m_\chi^{(n)}}$ do not depend on $n$. 
\end{lemma}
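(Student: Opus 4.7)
The plan is to show that $m_\chi^{(n)} = m_\chi^{(n+1)}$ for all $n\geq 0$ and all $\chi\in\widehat\Delta(\Bbb Q_p)$, from which the claim will follow by induction. The essential input will be identity \eqref{Tp invariants}, which says that, via the canonical injection $T_p^{(n)}\hookrightarrow T_p^{(n+1)}$, we may identify $T_p^{(n)}$ with the $H_n$--invariants $(T_p^{(n+1)})^{H_n}$, where $H_n:=\Gal(L_{n+1}/L_n)$. Since $H_n$ is pro--$p$, it lies in the Sylow $p$--subgroup $P_{n+1}$ of $G_{n+1}$, and the decompositions $G_{n+1}\simeq P_{n+1}\times\Delta$ and $G_n\simeq P_n\times\Delta$ imply that the canonical surjection $P_{n+1}\twoheadrightarrow P_n$ has kernel $H_n$.

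The first step will be to observe that the orthogonal idempotents $e_\chi\in\Bbb Z_p[\Delta]$ governing the $\chi$--decomposition commute with the action of $H_n\subseteq P_{n+1}$, as a consequence of the product structure $G_{n+1}\simeq P_{n+1}\times\Delta$. Taking $\chi$--components of the identity $T_p^{(n)}=(T_p^{(n+1)})^{H_n}$ will then yield an isomorphism of $\Bbb Z_p(\chi)[P_n]$--modules
\[
T_p^{(n),\chi} \simeq (T_p^{(n+1),\chi})^{H_n},
\]
whose left-hand side is free of rank $m_\chi^{(n)}$ over $\Bbb Z_p(\chi)[P_n]$ by \eqref{def mchil}.

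The second step will be to compute the right-hand side directly from the freeness $T_p^{(n+1),\chi}\simeq \Bbb Z_p(\chi)[P_{n+1}]^{m_\chi^{(n+1)}}$ given by \eqref{def mchil}. For this it suffices to show that the ring $\Bbb Z_p(\chi)[P_{n+1}]^{H_n}$ is free of rank $1$ over $\Bbb Z_p(\chi)[P_n]$. Fixing coset representatives $g_1,\dots,g_r\in P_{n+1}$ for $P_n = P_{n+1}/H_n$, a direct calculation will show that the $H_n$--invariants are $N_{H_n}\cdot\bigoplus_{i=1}^r\Bbb Z_p(\chi)g_i$, with $N_{H_n}:=\sum_{h\in H_n}h$ serving as a free $\Bbb Z_p(\chi)[P_n]$--generator (where $P_n$ acts through any lift to $P_{n+1}$, well-defined modulo $H_n$ on $H_n$--invariants). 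This will give $(T_p^{(n+1),\chi})^{H_n}\simeq \Bbb Z_p(\chi)[P_n]^{m_\chi^{(n+1)}}$.

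Comparing the two descriptions we obtain $\Bbb Z_p(\chi)[P_n]^{m_\chi^{(n)}}\simeq \Bbb Z_p(\chi)[P_n]^{m_\chi^{(n+1)}}$ as $\Bbb Z_p(\chi)[P_n]$--modules. Since $\Bbb Z_p(\chi)[P_n]$ is a local ring (its unique maximal ideal being generated by the maximal ideal of $\Bbb Z_p(\chi)$ and the augmentation ideal of the $p$--group $P_n$), free ranks are well-defined and we conclude $m_\chi^{(n)}=m_\chi^{(n+1)}$, completing the induction. I do not anticipate a serious obstacle; the only point requiring care is the verification that the $\chi$--decomposition commutes with taking $H_n$--invariants, which is immediate from the commuting factor structure $G_{n+1}\simeq P_{n+1}\times\Delta$.
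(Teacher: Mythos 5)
Your proof is correct and follows essentially the same strategy as the paper: both reduce to the identity \eqref{Tp invariants}, pass to $\chi$--components, and compare ranks of free modules over the local ring $\Bbb Z_p(\chi)[P_n]$. The paper compares level $n$ to level $0$ in one step and attributes the commutation of $\chi$--parts with taking invariants to projectivity, whereas you iterate one level at a time and (more precisely) trace that commutation to the product decomposition $G_{n+1}\simeq P_{n+1}\times\Delta$, with the explicit computation $\Bbb Z_p(\chi)[P_{n+1}]^{H_n}\simeq\Bbb Z_p(\chi)[P_n]$ making the rank comparison transparent.
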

\begin{proof}
  Note that since $T_p(M_S^{(n)})$ is $\Zp[G_n]$--projective, taking $\Gal(L_n/L_0)$ fixed points commutes with taking $\chi$-parts.
  Hence (\ref{def mchil}) combined with \eqref{Tp invariants} implies
  $m_\chi^{(n)} = m_\chi^{(0)}$, for all characters  $\chi$ and all $n\geq 0$.
\end{proof}
\medskip

\begin{definition}
We let $m_\chi:=m_\chi^{(n)} = {\rm rank}_{\Bbb Z_p(\chi)[P_n]}T_p(M_S^{(n)})^\chi$, for all $\chi$ and $n$ as above.
\end{definition}
\medskip

In order to simplify notations, for every $\chi\in\widehat\Delta$ and  all $n\geq 0$, we write
\[
R_n:=\Bbb Z_p[G_n], \quad  R_n^\chi := \Zp(\chi)[P_n] , \quad 
T_n = \TpMSl, \quad T_{n}^\chi = \TpMSl^\chi.
\]
We let $P_\infty:=\varprojlim_n P_n$, observe that $G_\infty=P_\infty\times\Delta$, and set
\[
R_\infty:=\Bbb Z_p[[G_\infty]], \quad  R_\infty^\chi := \Zp(\chi)[[P_\infty]] , \quad 
T_\infty := T_p(M_S^{(\infty)}), \quad T_\infty^\chi := T_p(M_S^{(\infty)})^\chi.
\]
Further, we let $\Lambda_n^\chi:=R_n^\chi[[\Gamma]]=\Bbb Z_p(\chi)[[P_n\times \Gamma]]$ and $\Lambda^\chi:=R_\infty^\chi[[\Gamma]]=\Bbb Z_p(\chi)[[P_\infty\times\Gamma]]$. Since 
\[
  \Lambda_n\simeq\bigoplus_{\chi\in\widehat{\Delta}(\Qp)}\Lambda_n^\chi, \quad
  \Lambda\simeq \bigoplus_{\chi\in\widehat{\Delta}(\Qp)}\Lambda^\chi,
\]
via the usual character--evaluation maps, the $\Lambda_n$--modules $\Lambda_n^\chi$ and the $\Lambda$--modules $\Lambda^\chi$ are projective and cyclic, for all characters $\chi$ as above.\\

Now, we fix a character $\chi$ as above and, for a given $n\geq 0$, we fix an  $R_n^\chi$-basis of $T_n^\chi$:
\[
x_1^{(n)}, \ldots, x_{m_\chi}^{(n)}.
\]
We let $A_{\gamma}^{(n),\chi} \in \Gl_{m_\chi}(R_n^\chi)$ be the matrix associated to the action of $\gamma$ on $T_n^\chi$
  with respect to the fixed basis. Let $\Phi_\gamma^{(n), \chi}$ be the $R_n^\chi[[\Gamma]]$-linear endomorphism of $R_n^\chi[[\Gamma]]^{m_\chi}$
of matrix
\[
1 - \gamma^{-1} A_\gamma^{(n), \chi} \in M_{m_\chi}(R_n^\chi[[\Gamma]])
\]
with respect to the canonical $R_n^\chi[[\Gamma]]$-basis $e_1^{(n)}, \ldots, e_{m_\chi}^{(n)}$ of $R_n^\chi[[\Gamma]]^{m_\chi}$.
By the proof of \cite[Prop.~4.1]{GP12}, in particular (6) of loc.cit.,  combined with Corollary \ref{non-zero divisors}(2) below, we have an exact sequence of $R_n^\chi[[\Gamma]]$--modules
\begin{equation}\label{Lambda-ell-chi-ses}
0\lra R_n^\chi[[\Gamma]]^{m_\chi} \stackrel{\Phi_\gamma^{(n), \chi}}\lra R_n^\chi[[\Gamma]]^{m_\chi}\stackrel{\pi_n^\chi}\lra T_n^\chi \lra 0,
\end{equation}
where $\pi_n^\chi$ is defined by $\pi_n^\chi(e_i^{(n)}) = x_i^{(n)}$. \\

Next, we show that, given an $R_n^\chi$--basis $x_1^{(n)}, \ldots, x_{m_\chi}^{(n)}$ for $T_n^\chi$, we can choose an $R_{n+1}^\chi$-basis
$
x_1^{(n+1)}, \ldots, x_{m_\chi}^{(n+1)}
$
of $T_{n+1}^\chi$, such that we have a commutative diagram
 \begin{equation}\label{comm GP diagram}
\xymatrix{
0\ar[r]&R_n^\chi[[\Gamma]]^{m_\chi} \ar[r]^{\Phi_\gamma^{(n), \chi}} & R_n^\chi[[\Gamma]]^{m_\chi} \ar[r]^{\pi_n^\chi} & T_n^\chi \ar[r] & 0 \\
0\ar[r] &R_{n+1}^\chi[[\Gamma]]^{m_\chi} \ar[r]^{\Phi_\gamma^{(n+1), \chi}} \ar@{>>}[u]^{\varphi_{n+1/n}} & R_{n+1}^\chi[[\Gamma]]^{m_\chi} \ar[r]^{\pi_{n+1}^\chi}  \ar@{>>}[u]^{\varphi_{n+1/n}} & 
T_{n+1}^\chi \ar[r]  \ar@{>>}[u]^{N_{n+1/n}} & 0. 
}
\end{equation}
Here the vertical maps on the left and in middle are defined by $e_i^{(n+1)} \mapsto e_i^{(n)}$ and the canonical (componentwise) projections $R_{n+1}^\chi\to R_n^\chi$.

To that end, we start with an arbitrary 
$R_{n+1}^\chi$-basis
\[
y_1^{(n+1)}, \ldots, y_{m_\chi}^{(n+1)}
\]
of $T_{n+1}^\chi$ and show how to modify it so that diagram (\ref{comm GP diagram}) commutes.
Since the module  $T_{n+1}$ is $G_{n+1}$--cohomologically trivial (see \cite[Th.~3.10]{GP12}), by (\ref{Tp invariants}) above
we have
\[
N_{n+1/n}\left(T_{n+1}^\chi \right) =  \left(T_{n+1}^\chi \right)^{\Gal(L_{n+1}/L_n)} = T_n^\chi
\]
and therefore
\[
\left\{ N_{n+1/n}\left( y_1^{(n+1)} \right), \ldots, N_{n+1/n}\left( y_{m_\chi}^{(n+1)} \right) \right\}
\]
is an $R_n^\chi$-basis of $T_ n^\chi$. Let $U_n \in \Gl_{m_\chi}(R_n^\chi)$ denote the matrix such that
\[
\left(
  \begin{array}{c}
    x_1^{(n)} \\ \vdots \\ x_{m_\chi}^{(n)}
  \end{array}
\right) =
U_n 
\left(
  \begin{array}{c}
    N_{n+1/n}\left( y_1^{(n+1)} \right) \\ \vdots \\ N_{n+1/n}\left( y_{m_\chi}^{(n+1)} \right) 
  \end{array}
\right).
\]
Let $U_{n+1} \in M_{m_\chi}(R_{n+1}^\chi)$ be such
that $ \varphi_{n+1/n}(U_{n+1}) = U_n$. Actually, by the next lemma, $U_{n+1}$ is an invertible matrix.

\begin{lemma}
  Let $\varphi \colon S \lra R$ be a morphism of commutative local rings, i.e. $\varphi(\frm_S) \sseq \frm_R$, where $\frm_S$ and $\frm_R$ are the corresponding maximal ideals.
 Let $U \in M_m(R)$,  $V \in M_m(S)$ be matrices such that $\varphi(V) = U$. Then:
\[
U \in \Gl_m(R) \iff V \in \Gl_m(S).
\]
\end{lemma}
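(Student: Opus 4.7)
The plan is to reduce the invertibility of a square matrix over a local ring to the non-containment of its determinant in the maximal ideal, and then use the hypothesis $\varphi(\mathfrak{m}_S) \subseteq \mathfrak{m}_R$ to transfer this condition between $S$ and $R$. Concretely, for a commutative local ring $(A,\mathfrak{m}_A)$, a matrix $M \in M_m(A)$ lies in $\mathrm{GL}_m(A)$ if and only if $\det(M) \in A^\times$, and $A^\times = A \setminus \mathfrak{m}_A$; this is the only nontrivial input.

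For the implication $V \in \mathrm{GL}_m(S) \Rightarrow U \in \mathrm{GL}_m(R)$, I would simply apply $\varphi$ (entrywise) to the identity $V V^{-1} = I_m$ to obtain $U\,\varphi(V^{-1}) = I_m$, which already exhibits $U$ as invertible with inverse $\varphi(V^{-1})$; no use of locality is needed here.

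For the converse $U \in \mathrm{GL}_m(R) \Rightarrow V \in \mathrm{GL}_m(S)$, I would argue by contrapositive. Suppose $V \notin \mathrm{GL}_m(S)$; then $\det(V) \notin S^\times$, hence $\det(V) \in \mathfrak{m}_S$ by locality of $S$. Applying $\varphi$, and using the multiplicativity of determinant together with the hypothesis $\varphi(\mathfrak{m}_S) \subseteq \mathfrak{m}_R$, gives $\det(U) = \det(\varphi(V)) = \varphi(\det(V)) \in \mathfrak{m}_R$, so $\det(U) \notin R^\times$ and $U \notin \mathrm{GL}_m(R)$. I do not anticipate a genuine obstacle: the only point to be alert to is that the hypothesis on $\varphi$ sending maximal ideal into maximal ideal is essential (a general ring homomorphism between local rings need not have this property), and it is precisely what makes the contrapositive step work.
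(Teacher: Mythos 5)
Your proof is correct and is essentially identical to the one in the paper: the forward direction applies $\varphi$ entrywise to $VV^{-1}=I_m$, and the converse is the same determinant-plus-locality contrapositive, using $\varphi(\det V)=\det U$ and $\varphi(\mathfrak m_S)\subseteq\mathfrak m_R$.
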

\begin{proof}
  If $VW = 1$ with $W \in  M_m(S)$, then $1 = \varphi(VW) = \varphi(V) \cdot \varphi(W) = U \cdot \varphi(W)$, i.e., $U^{-1} = \varphi(W)$.
Conversely suppose that $V \not\in \Gl_m(S)$. Then $\det_S(V) \in \frm_S$,
 and hence
\[
\det\nolimits_R(U) = \det\nolimits_R(\varphi(V)) = \varphi( \det\nolimits_R(V) ) \in \frm_R,
\]
contradicting $U \in \Gl_m(R)$.
\end{proof}

Now, since $U_{n+1}$ is invertible, $\{x_1^{(n+1)}, \dots, x_{m_\chi}^{(n+1)}\}$ defined by
\[
\left(
  \begin{array}{c}
    x_1^{(n+1)} \\ \vdots \\ x_{m_\chi}^{(n+1)}
  \end{array}
\right) =
U_{n+1} 
\left(
  \begin{array}{c}
     y_1^{(n+1)}  \\ \vdots \\  y_{m_\chi}^{(n+1)}  
  \end{array}
\right).
\]
is an $R_{n+1}^\chi$--basis of $T_{n+1}^\chi$. Then the right hand square of (\ref{comm GP diagram}) commutes because
\begin{equation}\label{norm comp basis}
  \left(
  \begin{array}{c}
    N_{n+1/n}x_1^{(n+1)} \\ \vdots \\ N_{n+1/n}x_{m_\chi}^{(n+1)}
  \end{array}
\right) =
U_{n+1} 
\left(
  \begin{array}{c}
     N_{n+1/n}y_1^{(n+1)}  \\ \vdots \\  N_{n+1/n}y_{m_\chi}^{(n+1)}  
  \end{array}
\right)
=
U_{n} 
\left(
  \begin{array}{c}
     N_{n+1/n}y_1^{(n+1)}  \\ \vdots \\  N_{n+1/n}y_{m_\chi}^{(n+1)}  
  \end{array}
\right) = \left(
  \begin{array}{c}
    x_1^{(n)} \\ \vdots \\ x_{m_\chi}^{(n)}
  \end{array}
\right). 
\end{equation}
Let $\mu_\gamma$ denote multiplication by $\gamma$ in $T_n^\chi$. By the definition of $A_{\gamma}^{(n), \chi}$, one has 
\begin{equation}\label{matrix coeff}
\mu_\gamma\left( x_i^{(n)} \right) = \sum_{j=1}^{m_\chi}A_{\gamma,ij}^{(n), \chi} x_j^{(n)}, \quad\text{for all } n\ge 0\text{ and } 1\leq i\leq m_\chi.
\end{equation}
To prove commutativity of the left hand square of (\ref{comm GP diagram}) one has to show
\begin{equation}\label{coeffientwise}
  \varphi_{n+1/n}\left( A_{\gamma,ij}^{(n+1), \chi} \right) =  A_{\gamma,ij}^{(n), \chi}.
\end{equation}
Since $\mu_\gamma$ is an $R_n^\chi$--linear map, 
it follows from (\ref{norm comp basis}) and (\ref{matrix coeff}) that 
\begin{eqnarray*}
  \mu_\gamma\left( x_i^{(n)} \right) &=& \mu_\gamma\left( N_{n+1/n}x_i^{(n+1)} \right)  \\
&=& N_{n+1/n} \left(  \sum_{j=1}^{m_\chi}A_{\gamma,ij}^{(n+1), \chi} x_j^{(n+1)} \right)   \\
&=&  \sum_{j=1}^{m_\chi}  \varphi_{n+1/n} \left(  A_{\gamma,ij}^{(n+1), \chi}\right)N_{n+1/n} \left(  x_j^{(n+1)} \right)   \\
&=&  \sum_{j=1}^{m_\chi} \varphi_{n+1/n} \left(  A_{\gamma,ij}^{(n+1), \chi}\right)x_j^{(n)}, 
\end{eqnarray*}
and this, in turn, immediately implies (\ref{coeffientwise}).\\

Now, we start with an $R_0^\chi$--basis $x_1^{(0)}, \dots, x_{m_\chi}^{(0)}$ for $T_0^\chi$ and use the procedure above inductively to construct 
$R_n^\chi$--bases $x_1^{(n)}, \dots, x_{m_\chi}^{(n)}$ for $T_n^\chi$ so that \eqref{comm GP diagram} commutes, for all $n\geq 0$.  Therefore, we can take a projective limit as $n\to\infty$ in \eqref{comm GP diagram}.  The Mittag-Leffler property
(see \cite[Prop.~9.1]{Hartshorne}) implies that we obtain an exact sequence of $\Lambda^\chi$--modules
 \begin{equation}\label{Lambda-chi-ses}
\xymatrix{
0 \ar[r] & (\Lambda^\chi)^{m_\chi} \ar[rr]^{\Phi_\gamma^{(\infty), \chi}} &&
(\Lambda^\chi)^{m_\chi} \ar[rr]^{\qquad \pi_\infty^\chi} && T_\infty^\chi\ar[r] & 0,
}
\end{equation}
where $\Phi_\gamma^{(\infty), \chi}:=\varprojlim_n \Phi_\gamma^{(n), \chi}$ and $\pi_\infty^\chi:=\varprojlim_n\pi_n^\chi$.
By \eqref{coeffientwise}, we may define the following matrix
\[
A_\gamma^{(\infty), \chi} := \{ A_\gamma^{(n), \chi} \}_{n \ge 0} \in \varprojlim_n \Gl_{m_\chi}(R_n^\chi)=\Gl_{m_\chi}(R_\infty^\chi).
\]
Consequently, the map $\Phi_\gamma^{(\infty), \chi}$ has matrix $(1-\gamma^{-1}A_\gamma^{(\infty), \chi})$ in the standard basis of $(\Lambda^\chi)^{m_\chi}$, for all characters $\chi$ as above.\\

If we take the direct sum of \eqref{Lambda-chi-ses} over all $\chi$ we obtain the exact sequence of $\Lambda$--modules
\begin{equation}\label{Lambda-ses}
\xymatrix{
0 \ar[r] & \bigoplus_{\chi}(\Lambda^\chi)^{m_\chi} \ar[rr]^{\Phi_\gamma^{(\infty)}} && \bigoplus_\chi(\Lambda^\chi)^{m_\chi} \ar[rr]^{\pi_\infty} && T_\infty \ar[r] &  0,
}
\end{equation}
where $\Phi_\gamma^{(\infty)}:=(\Phi_\gamma^{(\infty), \chi})_\chi$ and $\pi_\infty:=(\pi_\infty^\chi)_\chi$. 
The exact sequence above shows that the $\Lambda$--module $T_\infty = T_p(M_S^{(\infty)})$ is finitely generated, of projective dimension at most $1$. \\

Moreover, for all $\chi$ we have the following equalities
\begin{eqnarray}
\Fitt_{\Lambda^\chi}\left( T_\infty^\chi \right) &\stackrel{(*)}=& \det\nolimits_{\Lambda^\chi} \left( 1 - \gamma^{-1}A_\gamma^{(\infty), \chi} \right) \cdot \Lambda^\chi \nonumber \\ 
                                                 &\stackrel{(**)}=& \varprojlim_n  \left( \det\nolimits_{\Lambda_n^\chi} \left( 1 - \gamma^{-1}A_\gamma^{(n), \chi} \right) \cdot \Lambda_n^\chi\right)
                                                                    \label{Fitt eqs I} \\
&\stackrel{(***)}=& \varprojlim_n\,  \Fitt_{\Lambda_n^\chi}\left(T_n^\chi\right). \nonumber
\end{eqnarray}
Above, equality (*) follows from \eqref{Lambda-chi-ses}, equality (**) follows from Lemma \ref{lim of nzd} and  Corollary \ref{non-zero divisors}(1)  
below, and (***) follows from \eqref{Lambda-ell-chi-ses}. 

Now, we take the direct sum over all $\chi$ of equality \eqref{Fitt eqs I} to obtain
\begin{eqnarray*}\label{eqn:one}
  \Fitt_\Lambda\left( T_\infty \right)
&=& \varprojlim_n\,  \Fitt_{\Lambda_n} \left(  T_n\right )   \\
&\stackrel{(*)}=& \varprojlim_n \left(\Theta_{S, \Sigma}^{(n)}(\gamma^{-1}) \cdot \Lambda_n\right) \\
 &\stackrel{(**)}=& \Theta_{S, \Sigma}^{(\infty)}(\gamma^{-1}) \cdot \Lambda, 
\end{eqnarray*}
where $(*)$ is one of the main results of Greither and Popescu (see Theorem \ref{GP Th.4.3}(2) above) and  $(**)$ is Corollary \ref{non-zero divisors}(4)  below. 

Now, the fact that $T_\infty$ is torsion and has projective dimension exactly equal to $1$ over $\Lambda$ follows from equality
\eqref{Fitt eqs I} above, Corollary  \ref{non-zero divisors}(5), and Lemma \ref{torsion lemma}.\\

This concludes the proof of Theorem \ref{limit theorem 1}, save for the technical results which imply the injectivity of the maps
$\Phi_\gamma^{(n), \chi}$ and therefore the exactness of \eqref{Lambda-ell-chi-ses}, as well as both equalities (**) above. We state and prove these technical results below.\\

\begin{lemma}\label{lim of nzd}
  Let $(R_m, \pi_{m,n})$ be a projective system of commutative rings and set $R_\infty := \varprojlim_m R_m$. Let 
$\alpha_\infty := \{ \alpha_m \}_m \in R_\infty $ be a coherent sequence of non-zero divisors. Then
\[
\varprojlim_m\, (\alpha_m R_m) = \alpha_\infty R_\infty.
\]
\end{lemma}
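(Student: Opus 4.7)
The inclusion $\alpha_\infty R_\infty \subseteq \varprojlim_m \alpha_m R_m$ is immediate: if $x = \alpha_\infty y$ with $y = (y_m)_m \in R_\infty$, then the $m$th component of $x$ is $\alpha_m y_m \in \alpha_m R_m$, and the family $(\alpha_m y_m)_m$ is automatically coherent because both $\alpha_\infty$ and $y$ are. The entire content of the lemma lies in the reverse inclusion.

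For the nontrivial direction, I would take a coherent sequence $x = (x_m)_m \in \varprojlim_m \alpha_m R_m$ and try to lift it to an element of $\alpha_\infty R_\infty$. For each $m$, since $x_m \in \alpha_m R_m$, I can write $x_m = \alpha_m y_m$ for some $y_m \in R_m$, and because $\alpha_m$ is a non-zero divisor, the element $y_m$ is uniquely determined by $x_m$. The plan is then to show that the sequence $(y_m)_m$ is automatically coherent, which, by the uniqueness that the non-zero divisor hypothesis guarantees, defines an element $y \in R_\infty$ with $\alpha_\infty y = x$.

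The coherence of $(y_m)_m$ is the main (and only) technical step. For $m \ge n$, applying $\pi_{m,n}$ to $x_m = \alpha_m y_m$ and using the fact that $\pi_{m,n}$ is a ring homomorphism and that $\alpha_\infty$ is coherent gives $x_n = \pi_{m,n}(x_m) = \pi_{m,n}(\alpha_m)\pi_{m,n}(y_m) = \alpha_n \pi_{m,n}(y_m)$. Since also $x_n = \alpha_n y_n$, we obtain $\alpha_n \bigl( \pi_{m,n}(y_m) - y_n \bigr) = 0$, and now the non-zero divisor hypothesis on $\alpha_n$ forces $\pi_{m,n}(y_m) = y_n$, as required. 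The main (and really the only) obstacle is thus the use of the non-zero divisor hypothesis at every finite level to ensure both the existence of a unique lift $y_m$ and its compatibility with the transition maps; without this hypothesis the conclusion would genuinely fail, so its placement in the argument is essential rather than technical.
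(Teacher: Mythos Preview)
Your proof is correct and follows essentially the same argument as the paper: write each component as $\alpha_m y_m$, then use the non-zero divisor hypothesis on $\alpha_n$ to cancel and conclude that the $y_m$ form a coherent sequence. The paper's version is terser but identical in content.
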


\begin{proof}
  Let $\{ \alpha_m r_m \}_m \in \varprojlim_m \alpha_m R_m$. Then, for $m > n$,
\[
\alpha_n r_n = \pi_{m,n}(\alpha_{m} r_{m}) = \alpha_n \pi_{m,n}(r_{m}).
\]
Since $\alpha_n$ is a non-zero divisor by assumption, this implies $r_n = \pi_{m,n}(r_{m})$, and hence
$\{ \alpha_m r_m \}_m = \alpha_\infty r_\infty \in  \alpha_\infty R_\infty$ with $r_\infty := \{ r_m \}_m $. The opposite containment is obvious
(and true without the assumption that the $a_m$ are non-zero divisors).
\end{proof}

\begin{lemma}\label{nzd polynomials}
Let $R$ be $R_n^\chi$ or $R_n$. Let $f \in R[\gamma] \sseq R[[\Gamma]]$ be a polynomial in $\gamma$ such that the leading coefficient is a unit in $R$. 
Then $f$ is a non-zero divisor in $R[[\Gamma]]$.
\end{lemma}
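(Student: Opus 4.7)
The plan is to reduce to the case $R = R_n^\chi$ (a complete Noetherian local ring), identify $R[[\Gamma]]$ with $R[[T]]$ via $T = \gamma - 1$, and then run a Weierstrass-type $\frm$-adic filtration argument on $R[[T]]$.

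For the reduction, I would use the product decomposition $R_n \simeq \prod_{\chi \in \widehat\Delta(\Qp)} R_n^\chi$, which gives $R_n[[\Gamma]] \simeq \prod_\chi R_n^\chi[[\Gamma]]$ and $f = (f^\chi)_\chi$, each $f^\chi$ inheriting a unit leading coefficient. Since being a non-zero divisor is component-wise in a finite product, it suffices to treat $R = R_n^\chi$, which is a complete Noetherian local ring with maximal ideal $\frm$ and residue field $R/\frm$. Under the identification $R[[\Gamma]] \simeq R[[T]]$, the element $f = \sum_{i=0}^d a_i(1+T)^i$ has $T^d$-coefficient equal to $a_d$ (only the $i=d$ term contributes since $\binom{i}{d}=0$ for $i<d$), so $f$ remains a polynomial of degree $d$ in $T$ with unit leading coefficient; its reduction $\bar f \in (R/\frm)[T]$ is then a nonzero polynomial of degree $d$, hence a non-zero divisor in the integral domain $(R/\frm)[[T]]$.

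For the main argument, I would filter $R[[T]]$ by $F_j := \frm^j R[[T]]$. Since $R$ is Noetherian complete local, Krull's intersection theorem gives $\bigcap_j F_j = 0$; by flatness of $R[[T]]$ over $R$ together with finite generation of $\frm^j$, one obtains the canonical isomorphism $F_j / F_{j+1} \simeq (\frm^j/\frm^{j+1}) \otimes_{R/\frm} (R/\frm)[[T]]$, a free $(R/\frm)[[T]]$-module of finite rank. Assume for contradiction that $fg = 0$ with $g \ne 0$ and pick $j$ maximal with $g \in F_j$. Then $\bar g \in F_j / F_{j+1}$ is nonzero, yet $\bar f \cdot \bar g = \overline{fg} = 0$. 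But $\bar f$ acts as a non-zero divisor on the free $(R/\frm)[[T]]$-module $F_j / F_{j+1}$, forcing $\bar g = 0$, a contradiction. Hence $f$ is a non-zero divisor in $R[[T]] = R[[\Gamma]]$.

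The main technical points to verify are the structural identifications---namely, that $\frm^j R[[T]]$ consists precisely of power series with coefficients in $\frm^j$ (using Noetherianness and flatness), that $R[[T]]/\frm R[[T]] \simeq (R/\frm)[[T]]$, and that together these yield the asserted freeness of $F_j / F_{j+1}$. Once these identifications are in place, the rest is the standard ``reduce mod $\frm$'' strategy exploiting the fact that $(R/\frm)[[T]]$ is an integral domain. If $\Gamma$ is taken as the full profinite completion $\widehat{\mathbb{Z}}$ rather than its pro-$p$ quotient, a preliminary reduction to finite prime-to-$p$ quotients $\Gamma_M := \mathbb{Z}_p \times \mathbb{Z}/M$ suffices, after which $R[[\Gamma_M]] \simeq R[\mathbb{Z}/M][[T]]$ decomposes into a finite product of complete Noetherian local factors and the filtration argument applies factor by factor.
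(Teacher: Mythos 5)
Your proof is correct, but it takes a genuinely different route from the paper's. Both arguments begin by reducing $\Gamma\simeq\widehat{\mathbb Z}$ to the mixed quotients $\Gamma_M=\mathbb Z_p\times\mathbb Z/M\mathbb Z$ with $p\nmid M$ (the paper does this directly on the coherent sequences $(f_m)$, $(g_m)$, while you observe that $R[[\Gamma]]=\varprojlim_M R[[\Gamma_M]]$ embeds into $\prod_M R[[\Gamma_M]]$), so that the ring becomes $R[\mathbb Z/M][[T]]$ with $\gamma_p\mapsto 1+T$ and the image of $f$ has a unit as its $T^d$-coefficient. From there the two proofs diverge. The paper embeds $R[\Gamma/\Gamma^M][[t]]$ into the finite direct sum $\bigoplus_\psi\Bbb Z_p[\psi][[t]]$ via $p$-adic characters of $G_n\times\Gamma/\Gamma^M$, notes that $\psi(\lambda_d\gamma_M^d)\ne 0$ for every $\psi$ because $\lambda_d$ is a unit, and concludes since each $\Bbb Z_p[\psi][[t]]$ is an integral domain and the embedding is injective. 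You instead decompose $R[\mathbb Z/M]$ into its complete Noetherian local factors $R_i$ and, on each $R_i[[T]]$, run a graded/Krull-filtration argument: filter by $\frm_i^jR_i[[T]]$, use Krull's intersection theorem, identify $\frm_i^jR_i[[T]]/\frm_i^{j+1}R_i[[T]]$ with $(\frm_i^j/\frm_i^{j+1})\otimes_{R_i/\frm_i}(R_i/\frm_i)[[T]]$ (free over the integral domain $(R_i/\frm_i)[[T]]$), and exploit that $\bar f$ is a non-zero polynomial mod $\frm_i$. So the paper's key tool is the character-theoretic embedding $\Bbb Z_p[G]\hookrightarrow\bigoplus_\psi\Bbb Z_p[\psi]$ into a product of integral domains, whereas yours is the associated graded of the $\frm$-adic filtration; both buy the same thing (reduction to the integral-domain case) by standard but different means. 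One small presentational remark: since $\Gamma=\widehat{\mathbb Z}$ rather than $\mathbb Z_p$, the identification $R[[\Gamma]]\simeq R[[T]]$ announced at the start of your main paragraph is not literally correct; the reduction to the quotients $\Gamma_M$ that you sketch at the end must come first, exactly as you yourself note. With that reordering the argument is complete.
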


\begin{proof}
 We write
\[
f = \lambda_d \gamma^d + \lambda_{d-1} \gamma^{d-1} + \ldots + \lambda_{1} \gamma + \lambda_0
\]
with $\lambda_j \in R$ and $\lambda_d \in R^\times$. We argue by
contradiction and suppose that $f$ is a zero divisor in  $R[[\Gamma]]$. Then there exists $g \in  R[[\Gamma]]$ 
such that $fg = 0$ and $g \ne 0$. Let $f = \{f_m\}_{m \in \NN}$ and  $g = \{g_m\}_{m \in \NN}$
with $f_m, g_m \in R[\Gamma / \Gamma^m]$. Then there exists $N \in \NN$ such that for all $m \in \NN$ 
\[
  f_{Nm} g_{Nm} = 0, \quad f_{Nm} \ne 0 \ne g_{Nm}
\]
in $R[\Gamma / \Gamma^{Nm}]$. In particular, for all $k \ge 0$, we have
\begin{equation}\label{zero divisor eq 1}
  f_{Np^k} g_{Np^k} = 0, \quad f_{Np^k} \ne 0 \ne g_{Np^k}.
\end{equation}
Write $N = Mp^a$ with $p \nmid M$. Since $\Gamma\simeq\widehat {\Bbb Z}$ (the profinite completion of $\Bbb Z$), we have the following group isomorphisms
\[
\Gamma/\Gamma^{Np^k} = \Gamma/\Gamma^{Mp^{a+k}} \simeq \Gamma/\Gamma^M \times \Gamma/\Gamma^{p^{a+k}} 
\simeq \Ze/M\Ze \times \Ze/p^{a+k}\Ze.
\]
Further, we have a surjective topological group morphism 
$$\Gamma\twoheadrightarrow \Gamma/\Gamma^M \times \varprojlim_k\Gamma/\Gamma^{p^{a+k}}\simeq \Bbb Z/M\Bbb Z\times\Bbb Z_p, \quad \gamma\to (\gamma_M, \gamma_p).$$ 
It follows that $\varprojlim_k R[\Gamma/\Gamma^{Np^k} ] \simeq R[\Gamma/\Gamma^M][[t]]$, where $\gamma_p$ maps to $1+t$.
Thus, we obtain a $\Bbb Z_p$--algebra homomorphism $\varphi$ defined by the following composition of maps
\[
\varphi:\,R[[\Gamma]]\overset{\pi_{M,p}}\longrightarrow\varprojlim_k R[\Gamma/\Gamma^{Np^k} ] \simeq R[\Gamma/\Gamma^M][[t]] \hookrightarrow
\bigoplus_\psi \Zp[\psi][[t]]
\]
where $\psi$ runs through the $\overline{\Bbb Q_p}$-valued characters of $(G_n \times \Gamma/\Gamma^M)$ modulo
the action of $\Gal(\overline{\Bbb Q_p}/\Qp)$ (and such
that $\psi|_\Delta = \chi$ if $R = R^\chi_{n}$). Clearly, $\varphi$ maps $f$ to $(\psi(\lambda_d\gamma_M^d) (1+t)^d + \ldots)_\psi$.  
As $\lambda_d \in R^\times$ by assumption, we have $\psi(\lambda_d\gamma_M^d)\in\Bbb Z_p[\psi]^\times$, so  the $\psi$-component of $\varphi(f)$ is non-zero, for all $\psi$. Since $\Bbb Z_p[\psi][[t]]$ is an integral domain, for all $\psi$, this implies that 
$\pi_{M,p}(f)$ is not a zero--divisor in $\varprojlim_k R[\Gamma/\Gamma^{Np^k} ]$. However, this contradicts equalities \eqref{zero divisor eq 1}, which concludes the proof of the Lemma.

\end{proof}

\begin{coro}\label{non-zero divisors} The following hold for all  $n \ge 0$ and all $\chi\in\widehat{\Delta}$.

\begin{enumerate}

\item The  element $\det\nolimits_{R_n^\chi[[\Gamma]]}\left( 1 - \gamma^{-1}A_\gamma^{(n), \chi} \right)$
is a  non-zero divisor in $R_n^\chi[[\Gamma]]$.

\item The map $\Phi_\gamma^{(n), \chi} \colon R_n^\chi[[\Gamma]]^{m_\chi} \lra  R_n^\chi[[\Gamma]]^{m_\chi}$ is injective.

\item The  element $  \Theta_{S, \Sigma}^{(n)}(\gamma^{-1})$ is a non-zero divisor in 
$\Lambda_n = R_n[[\Gamma]] $.

\item We have an equality $\Theta_{S, \Sigma}^{(\infty)}(\gamma^{-1})\cdot\Lambda=\varprojlim_n\left( \Theta_{S, \Sigma}^{(n)}(\gamma^{-1})\cdot\Lambda_n\right).$
\item The element $\Theta_{S, \Sigma}^{(\infty)}(\gamma^{-1})$ is a non--zero divisor in $\Lambda$.
\end{enumerate}
\end{coro}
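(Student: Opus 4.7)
The plan is to prove the five parts in the listed order; each step uses only its predecessors, Lemma~\ref{nzd polynomials} (producing non-zero divisors from polynomials in $\gamma$ with unit leading coefficient), and Lemma~\ref{lim of nzd} (handling projective limits of ideals generated by coherent non-zero divisors). The key observation that drives everything is that one can always convert the element under consideration into a monic polynomial in $\gamma$ times a unit (a power of $\gamma^{-1}$).

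For (1), rewrite
\[
\det\nolimits_{R_n^\chi[[\Gamma]]}\bigl(1 - \gamma^{-1}A_\gamma^{(n), \chi}\bigr) = \gamma^{-m_\chi}\cdot \det\nolimits_{R_n^\chi[[\Gamma]]}\bigl(\gamma\cdot I - A_\gamma^{(n), \chi}\bigr),
\]
where the second factor is the characteristic polynomial of $A_\gamma^{(n), \chi}$: a monic polynomial of degree $m_\chi$ in $\gamma$ with coefficients in $R_n^\chi$. Lemma~\ref{nzd polynomials} shows it is a non-zero divisor in $R_n^\chi[[\Gamma]]$, and multiplication by the unit $\gamma^{-m_\chi}$ preserves that property. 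Part (2) is then immediate from (1) via the adjugate matrix trick: if $\Phi_\gamma^{(n), \chi}(v) = 0$ for some $v\in R_n^\chi[[\Gamma]]^{m_\chi}$, multiply $(1-\gamma^{-1}A_\gamma^{(n),\chi}) v=0$ on the left by the adjugate of this matrix to conclude that $\det(1-\gamma^{-1}A_\gamma^{(n),\chi})\cdot v=0$, forcing $v=0$ by (1).

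For (3), the defining product formula \eqref{eq L poly} gives the decisive observation
\[
\Theta_{S, \Sigma}^{(n)}(0) = 1,
\]
since every Euler factor $(1-\sigma_v^{-1}(qu)^{d_v})$ and $(1-\sigma_v^{-1}u^{d_v})^{-1}$ evaluates to $1$ at $u=0$. Writing $\Theta_{S, \Sigma}^{(n)}(u) = 1 + a_1 u + \cdots + a_d u^d$ with $a_i\in R_n$ and $d$ its polynomial degree, we get
\[
\Theta_{S, \Sigma}^{(n)}(\gamma^{-1}) = \gamma^{-d}\bigl(\gamma^{d} + a_1 \gamma^{d-1} + \cdots + a_d\bigr),
\]
so another application of Lemma~\ref{nzd polynomials}, now with $R=R_n$ and the monic polynomial in parentheses, finishes part (3) after multiplication by the unit $\gamma^{-d}$.

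For (4), apply Lemma~\ref{lim of nzd} to the projective system $\{\Lambda_n\}$ with the coherent sequence $\alpha_n:=\Theta_{S, \Sigma}^{(n)}(\gamma^{-1})$; coherence is the content of Lemma~\ref{Theta functoriality} (together with the definition of $\Theta_{S, \Sigma}^{(\infty)}(u)$), and the non-zero-divisor hypothesis is exactly (3). Finally, (5) follows by a short direct argument that does not need the Iwasawa-theoretic structure of $\Lambda$: if $x=\{x_n\}_n\in\Lambda=\varprojlim_n\Lambda_n$ satisfies $\Theta_{S, \Sigma}^{(\infty)}(\gamma^{-1})\cdot x=0$, project onto each $\Lambda_n$ to get $\Theta_{S, \Sigma}^{(n)}(\gamma^{-1})\cdot x_n=0$; by (3) each $x_n$ vanishes, so $x=0$. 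The main conceptual step, and the only place where anything beyond bookkeeping is needed, is the identification of the leading coefficient in $\gamma$ of $\Theta_{S, \Sigma}^{(n)}(\gamma^{-1})$ (equivalently, the constant term of $\Theta_{S, \Sigma}^{(n)}(u)$) with a unit; once that is in hand, Lemma~\ref{nzd polynomials} does all the work for (1) and (3), the adjugate argument handles (2), and the projective-limit lemmas take care of (4) and (5).
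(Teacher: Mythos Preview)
Your proof is correct. Parts (1), (2), (4), and (5) are essentially identical to the paper's argument (same monic-polynomial trick for (1), same adjugate argument for (2), same application of Lemma~\ref{lim of nzd} and projection argument for (4) and (5)).

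The genuine difference is in part (3). The paper works $\chi$-componentwise: it invokes \cite[Prop.~4.8 and 4.10]{GP12} to factor $\Theta_{S,\Sigma}^{(n),\chi}(u)=P^\chi(u)Q^\chi(u)$ in $R_n^\chi[u]$ with $Q^\chi(\gamma^{-1})$ a unit and $P^\chi(u)=\det_{R_n^\chi}(1-\gamma u\mid T_p(M_{\bar S,\bar\Sigma}^{(n)})^\chi)$, then applies Lemma~\ref{nzd polynomials} to $P^\chi$ in each component and reassembles. Your route is more direct: the observation $\Theta_{S,\Sigma}^{(n)}(0)=1$, read off immediately from the Euler product \eqref{eq L poly}, already makes $\gamma^{d}\,\Theta_{S,\Sigma}^{(n)}(\gamma^{-1})$ a monic polynomial in $\gamma$ over $R_n$, so Lemma~\ref{nzd polynomials} applies globally without any decomposition or appeal to \cite{GP12}. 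Your argument is shorter and self-contained; the paper's approach, by contrast, makes the link between $\Theta_{S,\Sigma}^{(n)}$ and the determinant of Frobenius on the Tate module explicit, which is conceptually informative even if not strictly needed for the non-zero-divisor statement.
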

\begin{proof}
 (1) Observe that $\gamma^{m_\chi} \det_{R_n^\chi[[\Gamma]]}\left(1 - \gamma^{-1} A_\gamma^{(n), \chi} \right)$ is a polynomial
in $R_n^\chi[\gamma]$ of degree $m_\chi$ and leading coefficient $1$. Hence part (1) follows immediately from Lemma \ref{nzd polynomials} above.

(2) is a consequence of (1) and the following fact: let $R$ be a commutative ring, $A \in M_n(R)$ and suppose that $\det_R(A)$ is not a zero divisor.
Then $A \colon R^n \lra R^n$ (defined with respect to the standard basis) is injective. Indeed, let $A^*$ be the adjoint matrix. 
Then $$AA^* = A^*A = \rm{det}_R(A)\cdot {\rm id},$$ 
and therefore
$A^\ast\circ A$ is injective and, as a consequence, $A$ is injective

(3)  We apply results of \cite{GP12}, in particular Propositions 4.8 and 4.10. We can express the image $\Theta_{S, \Sigma}^{(n), \chi}(u)$
of  $\Theta_{S, \Sigma}^{(n)}(u)$ in $R_n^\chi[u]$ as a 
product of two polynomials $P^\chi(u), Q^\chi(u) \in R_n^\chi[u]$, such that $Q^\chi(\gamma^{-1}) \in R_n^\chi[[\Gamma]]^\times$. It thus suffices to
show that $P^\chi(\gamma^{-1})$ is a non-zero divisor. By \cite[Prop.~4.8 a)]{GP12} we have
\[
P^\chi(u) = \det\nolimits_{R_n^\chi}\left(1-\gamma u \mid T_p(M_{\bar{S}, \bar{\Sigma}}^{(n)})^\chi\right),
\]
so the element $\gamma^{m_\chi} P^\chi(\gamma^{-1})$ is a polynomial with 
leading coefficient $1$. Lemma \ref{nzd polynomials} implies that $\Theta_{S, \Sigma}^{(n), \chi}(\gamma^{-1})$ is  a non-zero divisor in $\Lambda_n^\chi$, for all $\chi$. Therefore, $\Theta_{S, \Sigma}^{(n)}(\gamma^{-1})=(\Theta_{S, \Sigma}^{(n), \chi}(\gamma^{-1}))_\chi$ is a non--zero divisor in $\Lambda_n=\oplus_\chi\Lambda_n^\chi$.

(4) Apply part (3) combined with Lemmas \ref{Theta functoriality} and \ref{lim of nzd}.

(5) This follows immediately from (3), as $\Theta_{S, \Sigma}^{(\infty)}(\gamma^{-1})=\varprojlim_n\Theta_{S, \Sigma}^{(n)}(\gamma^{-1})$
in $\Lambda=\varprojlim_{n}\Lambda_n$.
\end{proof}

\medskip

\subsection{Co-descent to $L_\infty/k$}
In what follows, for every $n\geq 0$, we denote by $D^0(L_n)$ and $D_S^0(L_n)$ the $\Bbb Z [G_n]$--modules of
divisors of degree $0$ in $L_n$ and divisors of degree $0$ supported 
at primes above $S$ in $L_n$, respectively.
By $D_S(L_n)$ we denote the $S$--supported divisors of $L_n$ of arbitrary degree. Note that the degree is computed relative to $\Bbb F_q$.
Also, $U_S^{(n)}$ denotes the $\Bbb Z[G_n]$--module of $S$--units in $L_n$
(i.e. elements $f\in L_n^\times$ whose divisor ${\rm div}(f)$ is in $D_S^0(L_n)$). Finally,
$X_S^{(n)}$ denotes the $\Bbb Z[G_n]$--module of divisors of $L_{n}$ supported at primes above $S$ and
of {\it formal degree} $0$, i.e., formal sums $\sum_{v\in S(L_n)}n_v\cdot v$, with $n_v\in \Bbb Z$ and $\sum_v n_v=0$.\\

  By slightly  generalizing the results in \cite{GPff}, our Proposition \ref{smallS-proposition} and Remark \ref{RW-Tate-remark} in the Appendix applied for $L=L_n$,
  for every $n\geq 0$, provide us with canonical exact  sequences of $\Bbb Z_p[G_n]$--modules
\begin{equation}\label{Tate seq}
 0 \lra U_S^{(n)} \tensor_\Ze \Zp \lra T_p(M_S^{(n)}) \stackrel{1- \gamma}\lra  T_p(M_S^{(n)}) \lra \Nabla_S^{(n)} \lra 0.
\end{equation}
Here $\Nabla_S^{(n)}:=T_p(M_S^{(n)})_\Gamma$ sits in a short exact sequence of $\Bbb Z_p[G_n]$--modules
\begin{equation}\label{RW-ses-ell}
0 \lra \Pic^0_S(L_n) \tensor_\Ze \Zp \lra \Nabla_S^{(n)} \lra\widetilde X_S^{(n)} \lra 0,
\end{equation}
and $\widetilde X_S^{(n)}$ (defined precisely in the appendix) sits itself in a short exact sequence 
\begin{equation}\label{tilde-x-ses-ell}0\to \Bbb Z_p/d_S^{(n)}\Zp \to \widetilde X_S^{(n)}\to X_S^{(n)}\otimes\Bbb Z_p\to 0,
\end{equation}
where $d_S^{(n)}\Bbb Z:={\rm deg}(D_S(L_n))$ and $G_n$ acts trivially on $\Bbb Z_p/d_S^{(n)}\Zp$. In particular, note that if $S(L_n)$ (the set of places of $L_n$ sitting above places in $S$) contains a prime of degree coprime to $p$, then we have 
$\widetilde X_S^{(n)}= X_S^{(n)}\otimes\Bbb Z_p$. 
\\

Exact sequence \eqref{Tate seq} combined with Theorem \ref{GP Th.4.3} above, gives the following. 
\begin{coro}\label{Fitt-Nabla-ell}
  For all finite non-empty sets $\Sigma$ with $S \cap \Sigma = \emptyset$ and all $n \in \NN$ we have
\[
{\rm Fitt}_{\Bbb Z_p[G_n]}\left(\Nabla_S^{(n)}\right) = \Theta_{S, \Sigma}^{(n)}(1) \cdot \Zp[G_n].
\]
\end{coro}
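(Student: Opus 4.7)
The plan is to deduce the desired Fitting ideal identity directly from Theorem~\ref{GP Th.4.3}(2) by applying the $\gamma\mapsto 1$ specialization to the free $\Lambda_n$-resolution of $T_p(M_S^{(n)})$ implicit in the proof of Theorem~\ref{limit theorem 1}. This produces a square presentation of $\Nabla_S^{(n)}$ whose determinant computes $\Theta_{S,\Sigma}^{(n)}(1)$, and the whole argument reduces to working one $\chi$-component at a time.

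First I would split into $\chi$-components. Since $R_n=\Zp[G_n]=\bigoplus_{\chi\in\widehat{\Delta}(\Qp)}R_n^\chi$ as a direct product of commutative rings, every $R_n$-module decomposes and its $0$-th Fitting ideal decomposes componentwise; likewise, $\Theta_{S,\Sigma}^{(n)}(1)$ corresponds to $\bigl(\Theta_{S,\Sigma}^{(n),\chi}(1)\bigr)_\chi$ under this decomposition. It thus suffices to prove, for every $\chi$, the identity
\[
\Fitt_{R_n^\chi}\!\bigl(\Nabla_S^{(n),\chi}\bigr)=\Theta_{S,\Sigma}^{(n),\chi}(1)\cdot R_n^\chi.
\]

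Next, the $\chi$-component of \eqref{Tate seq} identifies $\Nabla_S^{(n),\chi}$ with the cokernel $T_n^\chi/(1-\gamma)T_n^\chi$. I would apply the right-exact base-change functor $-\otimes_{\Lambda_n^\chi}R_n^\chi$ (i.e.\ the specialization $\gamma\mapsto 1$) to the length-one free resolution \eqref{Lambda-ell-chi-ses}. Because $A_\gamma^{(n),\chi}$ has entries in $R_n^\chi$, it is unchanged under the specialization, so the presentation matrix $1-\gamma^{-1}A_\gamma^{(n),\chi}$ reduces to $1-A_\gamma^{(n),\chi}$, yielding the square presentation
\[
(R_n^\chi)^{m_\chi}\xrightarrow{\,1-A_\gamma^{(n),\chi}\,}(R_n^\chi)^{m_\chi}\twoheadrightarrow\Nabla_S^{(n),\chi},
\]
from which $\Fitt_{R_n^\chi}\!\bigl(\Nabla_S^{(n),\chi}\bigr)=\det\nolimits_{R_n^\chi}\!\bigl(1-A_\gamma^{(n),\chi}\bigr)\cdot R_n^\chi$. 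Then, the $\chi$-part of Theorem~\ref{GP Th.4.3}(2) yields
\[
\det\nolimits_{\Lambda_n^\chi}\!\bigl(1-\gamma^{-1}A_\gamma^{(n),\chi}\bigr)\cdot\Lambda_n^\chi=\Theta_{S,\Sigma}^{(n),\chi}(\gamma^{-1})\cdot\Lambda_n^\chi,
\]
and Corollary~\ref{non-zero divisors} ensures that both generators are non-zero divisors in $\Lambda_n^\chi$, so they differ by a unit. The augmentation $\gamma\mapsto 1$ sends units to units, whence
\[
\det\nolimits_{R_n^\chi}\!\bigl(1-A_\gamma^{(n),\chi}\bigr)\cdot R_n^\chi=\Theta_{S,\Sigma}^{(n),\chi}(1)\cdot R_n^\chi.
\]
Reassembling over $\chi$ then delivers the claim.

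The only non-formal point, and hence the chief place requiring care, is the identification $\Nabla_S^{(n),\chi}\cong T_n^\chi\otimes_{\Lambda_n^\chi}R_n^\chi$ implicit in the specialization step: it relies on the closed augmentation ideal of $\Gamma$ acting on $T_n^\chi$ through $(1-\gamma)$, which follows from $\gamma$ topologically generating $\Gamma$ together with continuity of the $\Gamma$-action. A related, equally minor, subtlety is that the $\mathrm{Tor}_1$ term which could appear to the left of the resulting presentation does not affect the $0$-th Fitting ideal, since $\Fitt_0$ depends only on the final two terms of a presentation. Everything else is routine manipulation of Fitting ideals and determinants.
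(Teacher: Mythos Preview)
Your argument is correct and rests on the same idea as the paper's proof: identify $\Nabla_S^{(n)}$ with the $\Gamma$--coinvariants of $T_p(M_S^{(n)})$ and base-change along the augmentation $\pi\colon\Lambda_n\to\Zp[G_n]$, $\gamma\mapsto 1$. The paper, however, carries this out more directly and without the $\chi$--decomposition: from \eqref{Tate seq} one has $\Nabla_S^{(n)}\simeq T_p(M_S^{(n)})\otimes_{\Lambda_n}\Zp[G_n]$, and since $0$-th Fitting ideals commute with extension of scalars one immediately gets
\[
\Fitt_{\Zp[G_n]}\bigl(\Nabla_S^{(n)}\bigr)=\pi\bigl(\Fitt_{\Lambda_n}(T_p(M_S^{(n)}))\bigr)=\pi\bigl(\Theta_{S,\Sigma}^{(n)}(\gamma^{-1})\cdot\Lambda_n\bigr)=\Theta_{S,\Sigma}^{(n)}(1)\cdot\Zp[G_n],
\]
the last input being Theorem~\ref{GP Th.4.3}(2). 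This bypasses your detour through the explicit resolution \eqref{Lambda-ell-chi-ses}, the determinant computation, and the ``two non-zero-divisors generating the same ideal differ by a unit'' step; in particular it does not require Corollary~\ref{non-zero divisors}. Your route is a perfectly valid unwinding of the same base-change argument, just more hands-on.
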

\begin{proof}
Exact sequence \eqref{Tate seq} gives an isomorphism of $\Bbb Z_p[G_n]$--modules
$$\Nabla_S^{(n)}\simeq T_p(M_S^{(n)})\otimes_{\Bbb Z_p[G_n][[\Gamma]]}\Bbb Z_p[G_n],$$
where $\Zp[G_n]$ is viewed as a $\Bbb Z_p[G_n][[\Gamma]]$--algebra via the unique $\Bbb Z_p[G_n]$--algebra morphism
$\pi:\Bbb Z_p[G_n][[\Gamma]]\twoheadrightarrow\Bbb Z_p[G_n]$ which takes $\gamma\to 1$.
Since Fitting ideals commute with extension of scalars, this gives an equality of $\Bbb Z_p[G]$--ideals 
$${\rm Fitt}_{\Bbb Z_p[G_n]}\left(\Nabla_S^{(n)}\right)=\pi\left({\rm Fitt}_{\Bbb Z_p[G_n][[\Gamma]]}(T_p(M_S^{(n)})\right).$$
Now, the Corollary follows from Theorem \ref{GP Th.4.3}.
\end{proof}
\medskip

In what follows, we set
\[
 U_S^{(\infty)} := \varprojlim_n (U_S^{(n)} \tensor_\Ze \Zp), \quad \Nabla_S^{(\infty)} :=  \varprojlim_n \Nabla_S^{(n)},
\]
where both limits are taken with respect to norm maps. 
\begin{lemma}\label{infty Tate seq lemma}
The sequence (\ref{Tate seq}) stays exact when we pass to the limit, i.e.
\begin{equation}\label{infty Tate seq}
0 \lra U_S^{(\infty)} \lra T_p(M_S^{(\infty)}) \stackrel{1- \gamma}\lra  T_p(M_S^{(\infty)}) \lra \Nabla_S^{(\infty)} \lra 0
\end{equation}
is an exact sequence of $\Lambda$-modules.
\end{lemma}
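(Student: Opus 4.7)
The plan is to split the four-term exact sequence \eqref{Tate seq} into two short exact sequences, take projective limits term by term, and verify that exactness is preserved by a standard compactness argument. Concretely, setting $J_n := (1-\gamma)\, T_p(M_S^{(n)}) \subseteq T_p(M_S^{(n)})$, the sequence \eqref{Tate seq} is the splicing of
\[
(A_n)\colon\ 0 \to U_S^{(n)} \tensor_\Ze \Zp \to T_p(M_S^{(n)}) \to J_n \to 0,
\]
\[
(B_n)\colon\ 0 \to J_n \to T_p(M_S^{(n)}) \to \Nabla_S^{(n)} \to 0.
\]
Since $1-\gamma$ is $G_n$-equivariant and commutes with the transition (norm) maps on $T_p(M_S^{(n)})$, the $J_n$ assemble into an inverse system and both $(A_n)$ and $(B_n)$ are inverse systems of short exact sequences with continuous transition maps.

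The key observation is that all three modules in each sequence are compact. Indeed, by Theorem \ref{GP Th.4.3} the module $T_p(M_S^{(n)})$ is $\Zp[G_n]$-projective; since $G_n$ is finite, it is a finitely generated $\Zp$-module and hence compact in its $p$-adic topology. The submodule $J_n$ and the quotient $\Nabla_S^{(n)}$ are therefore compact as well, and $U_S^{(n)} \tensor \Zp$ is a finitely generated $\Zp$-module hence also compact. Now I invoke the classical vanishing result: for an inverse system of compact Hausdorff topological abelian groups with continuous transition maps, $\varprojlim^1 = 0$ and so $\varprojlim$ preserves short exact sequences. Applying this to $(A_n)$ and $(B_n)$ yields two short exact sequences of compact $\Lambda$-modules
\[
0 \to U_S^{(\infty)} \to T_p(M_S^{(\infty)}) \to \varprojlim_n J_n \to 0,
\]
\[
0 \to \varprojlim_n J_n \to T_p(M_S^{(\infty)}) \to \Nabla_S^{(\infty)} \to 0,
\]
where the identifications on the outer terms are the very definitions of $U_S^{(\infty)}$, $T_p(M_S^{(\infty)})$, and $\Nabla_S^{(\infty)}$.

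It remains to identify the common middle term $\varprojlim_n J_n$ with the image of $1-\gamma$ acting on $T_p(M_S^{(\infty)})$. The inclusion $(1-\gamma)\, T_p(M_S^{(\infty)}) \subseteq \varprojlim_n J_n$ is immediate from compatibility of $1-\gamma$ with the norm maps. For the reverse inclusion, an element $(y_n)_n \in \varprojlim_n J_n$ can be written at each level as $y_n = (1-\gamma) x_n$ with $x_n \in T_p(M_S^{(n)})$; the fibres of the surjection $T_p(M_S^{(n)}) \twoheadrightarrow J_n$ are cosets of the compact group $U_S^{(n)} \tensor \Zp$, so by another application of the same compactness argument one can coherently choose preimages $x_n$ to form an element $(x_n)_n \in T_p(M_S^{(\infty)})$ with $(1-\gamma)(x_n)_n = (y_n)_n$. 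Splicing the two limit sequences along this identified middle term yields the four-term exact sequence \eqref{infty Tate seq} of $\Lambda$-modules. The only substantive ingredient is the vanishing of $\varprojlim^1$ for compact systems, which I expect to be the (routine) main point; a direct Mittag-Leffler argument on the unit groups $U_S^{(n)}$ alone would be awkward since their $\Zp$-ranks grow with $n$, whereas the compactness viewpoint sidesteps this entirely.
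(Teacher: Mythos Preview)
Your proof is correct and follows essentially the same strategy as the paper's: split \eqref{Tate seq} into two short exact sequences using the image $J_n=(1-\gamma)T_p(M_S^{(n)})$, pass to the limit in each, and splice. The paper phrases the vanishing of $\varprojlim^1$ on the first sequence as ``Mittag--Leffler, since $U_S^{(n)}\otimes\Zp$ is a finitely generated $\Zp$--module'' and on the second as ``the transition maps on $J_n$ are surjective (because those on $T_p(M_S^{(n)})$ are)''; you instead invoke compactness uniformly for both, which is arguably cleaner and amounts to the same thing. Two minor remarks: (i) your separate argument for the reverse inclusion $\varprojlim_n J_n\subseteq (1-\gamma)T_p(M_S^{(\infty)})$ is redundant, since the surjectivity of $T_p(M_S^{(\infty)})\to\varprojlim_n J_n$ is already part of the exactness of the limit of $(A_n)$; (ii) your closing comment that a Mittag--Leffler argument on the $U_S^{(n)}$ ``would be awkward since their $\Zp$--ranks grow with $n$'' is off the mark---the paper does exactly that, and growing ranks are irrelevant since ML concerns the descending images inside each fixed $U_S^{(n)}\otimes\Zp$. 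What actually makes the argument work (in either phrasing) is that finitely generated $\Zp$--modules are compact.
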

\begin{proof}
  We set $W^{(n)} := (1-\gamma) T_p(M_S^{(n)}) $. Then the functor $\varprojlim_n$ is exact on 
\begin{equation*}
\xymatrix{
0 \ar[r] &  U_S^{(n+1)} \tensor_\Ze \Zp \ar[r] \ar[d]^{N_{n+1/n}} &  T_p(M_S^{(n+1)}) \ar[r] \ar[d]^{N_{n+1/n}}
& W^{(n+1)}   \ar[r] \ar[d]^{N_{n+1/n}} & 0 \\
0 \ar[r] &  U_S^{(n)} \tensor_\Ze \Zp \ar[r]  &  T_p(M_S^{(n)}) \ar[r] & W^{(n)}   \ar[r]  & 0 \\
}
\end{equation*}
because $ U_S^{(n)} \tensor_\Ze \Zp $ is a finitely generated $\Zp$-module and therefore the projective system $\{U_S^{(n)} \tensor_\Ze \Zp\}_n$
satisfies the Mittag-Leffler condition. 

Since $N_{n+1/n} \colon  T_p(M_S^{(n+1)}) \lra T_p(M_S^{(n)})$ is surjective, the map $N_{n+1/n} \colon  W^{(n+1)} \lra W^{(n)}$
is also surjective. Hence, as above,  $\varprojlim_n$ is exact on 
\begin{equation*}
\xymatrix{
0 \ar[r] &  W{(n+1)} \ar[r] \ar[d]^{N_{n+1/n}} &  T_p(M_S^{(n+1)}) \ar[r] \ar[d]^{N_{n+1/n}}
& \Nabla_S^{(n+1)}   \ar[r] \ar[d]^{N_{n+1/n}} & 0 \\
0 \ar[r] &  W^{(n)} \ar[r]  &  T_p(M_S^{(n)}) \ar[r] & \Nabla_S^{(n)}   \ar[r]  & 0 \\
}
\end{equation*}
We can now glue the two short exact sequences at the $\infty$-level.
\end{proof}
\medskip

The following is an equivariant Iwasawa main conjecture--type result along the Drinfeld module (geometric) tower $L_\infty/k$, for the $\Bbb Z_p[[G_\infty]]$--module $\Nabla_S^{(\infty)}$, see Theorem \ref{EMC-II-intro} of the introduction.

\begin{theorem}[EMC-II]\label{limit theorem 2} For any finite, non-empty set $\Sigma$ of primes in k, disjoint from $S$,
  the following hold.
\begin{enumerate}
\item $\Nabla_S^{(\infty)}$ is a finitely generated, torsion $\Bbb Z_p[[G_\infty]]$--module of projective dimension $1$.
\item $\Fitt_{\Zp[[G_\infty]]}\left(\Nabla_S^{(\infty)}\right) = \Theta_{S, \Sigma}^{(\infty)}(1) \cdot \Zp[[G_\infty]].$
\end{enumerate}
\end{theorem}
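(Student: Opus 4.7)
The strategy is to obtain a length-one $R_\infty$-projective resolution of $\Nabla_S^{(\infty)}$ by applying the co-descent functor $-\otimes_\Lambda R_\infty$ (base change via $\gamma\mapsto 1$) to the length-one $\Lambda$-projective resolution of $T_p(M_S^{(\infty)})$ constructed in the proof of Theorem~\ref{limit theorem 1}, then to read off the Fitting ideal via the compatibility of Fittings with base change, and finally to deduce the torsion and projective-dimension statements once the generator of the Fitting ideal is shown to be a non-zero divisor. The exact sequence~\eqref{infty Tate seq} identifies $\Nabla_S^{(\infty)}$ with the $\Gamma$-coinvariants $T_p(M_S^{(\infty)})_\Gamma = T_p(M_S^{(\infty)})\otimes_\Lambda R_\infty$, where $R_\infty$ is made a $\Lambda$-algebra via the augmentation $\pi\colon \Lambda \twoheadrightarrow R_\infty = \Lambda/(1-\gamma)\Lambda$ sending $\gamma\mapsto 1$.

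Applying $-\otimes_\Lambda R_\infty$ to the resolution~\eqref{Lambda-ses} produces the right-exact sequence of $R_\infty$-modules
\begin{equation*}
\bigoplus_{\chi \in \widehat\Delta(\Qp)} (R_\infty^\chi)^{m_\chi} \xrightarrow{\overline\Phi^{(\infty)}} \bigoplus_{\chi \in \widehat\Delta(\Qp)} (R_\infty^\chi)^{m_\chi} \lra \Nabla_S^{(\infty)} \lra 0,
\end{equation*}
in which $\overline\Phi^{(\infty),\chi}$ is multiplication by the matrix $1 - A_\gamma^{(\infty),\chi}$, the $\gamma\mapsto 1$ specialization of $1-\gamma^{-1}A_\gamma^{(\infty),\chi}$. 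Since Fitting ideals commute with base change,
\begin{equation*}
\Fitt_{R_\infty}(\Nabla_S^{(\infty)}) \;=\; \pi\bigl(\Fitt_\Lambda(T_p(M_S^{(\infty)}))\bigr) \;=\; \pi\bigl(\Theta_{S,\Sigma}^{(\infty)}(\gamma^{-1})\cdot\Lambda\bigr) \;=\; \Theta_{S,\Sigma}^{(\infty)}(1)\cdot R_\infty,
\end{equation*}
by Theorem~\ref{limit theorem 1}(2), establishing claim (2) and exhibiting the generator of $\Fitt_{R_\infty}(\Nabla_S^{(\infty)})$ as $\det_{R_\infty}(\overline\Phi^{(\infty)})$.

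It then remains to prove that $\Theta_{S,\Sigma}^{(\infty)}(1)$ is a non-zero divisor in $R_\infty$. Arguing $\chi$-by-$\chi$, this reduces to showing that $\det_{R_\infty^\chi}(1 - A_\gamma^{(\infty),\chi})$ is a non-zero divisor in the local ring $R_\infty^\chi = \Zp(\chi)[[P_\infty]]$. Since $R_\infty^\chi$ is $\Zp$-flat, the property can be tested in $R_\infty^\chi\otimes_{\Zp}\Qp$, which decomposes (after suitably enlarging scalars by character theory of the abelian pro-$p$ group $P_\infty$) into a product of integral domains of the form $E[[\Gamma_\infty]]$, each an integral domain by Proposition~\ref{big-Iwasawa-algebra-prop}(2). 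In every factor, evaluation at a finite-order character of $\Gamma_\infty$ recovers a finite-level $L$-value $L_{S,\Sigma}(\chi',0)$ for an appropriate character $\chi'$ of $G_\infty$, and a non-vanishing result for Drinfeld modular $L$-functions along the tower produces one such non-zero value. Once this is established, the adjoint-matrix argument of Corollary~\ref{non-zero divisors}(2) forces $\overline\Phi^{(\infty)}$ to be injective, so
\begin{equation*}
0 \lra \bigoplus_{\chi} (R_\infty^\chi)^{m_\chi} \xrightarrow{\overline\Phi^{(\infty)}} \bigoplus_\chi (R_\infty^\chi)^{m_\chi} \lra \Nabla_S^{(\infty)} \lra 0
\end{equation*}
is a length-one $R_\infty$-projective resolution, yielding $\mathrm{pd}_{R_\infty}(\Nabla_S^{(\infty)}) \le 1$ and finite generation. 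Lemma~\ref{torsion lemma} applied to the non-zero-divisor generator of the Fitting ideal then shows $\Nabla_S^{(\infty)}$ is $R_\infty$-torsion, which forces $\mathrm{pd}_{R_\infty}(\Nabla_S^{(\infty)}) = 1$ exactly, completing (1).

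\textbf{Main obstacle.} The delicate input is the non-vanishing step: one must ensure that the image of $\Theta_{S,\Sigma}^{(\infty)}(1)$ is non-zero in each integral-domain factor of $R_\infty^\chi\otimes_{\Zp}\Qpc$. The awkward cases are those for which $L_{S,\Sigma}(\chi',0)$ is forced to vanish by Euler-factor cancellation (for instance trivial $\chi'$ when $|S|\ge 2$), and one must exploit the considerable freedom provided by twisting by non-trivial finite-order characters of $\Gamma_\infty\simeq\Zp^{\aleph_0}$ to restore non-vanishing. An alternative route would be to transport the factorization $\Theta^{(n),\chi}(u) = P^\chi(u)Q^\chi(u)$ of \cite[Prop.~4.8]{GP12} to the infinite level and argue directly that the geometric factor is a non-zero divisor via an adaptation of Lemma~\ref{nzd polynomials} to the complete local ring $R_\infty^\chi$.
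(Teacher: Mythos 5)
The high-level architecture of your proposal coincides with the paper's proof: use the exact sequence \eqref{infty Tate seq} to identify $\Nabla_S^{(\infty)}$ with the $\Gamma$-coinvariants of $T_p(M_S^{(\infty)})$, apply the base-change-compatibility of Fitting ideals to deduce part (2) from Theorem \ref{limit theorem 1}, pass the length-one projective resolution through the specialization $\gamma\mapsto 1$, and reduce everything to showing that $\Theta_{S,\Sigma}^{(\infty)}(1)$ is a non-zero divisor. You correctly diagnose where the difficulty lies (trivial characters on decomposition groups force vanishing of $L$-values at $s=0$).

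However, the non-zero-divisor step is the genuine content of the theorem, and your proposal does not supply it — you name it the ``delicate input'' and gesture at ``a non-vanishing result for Drinfeld modular $L$-functions,'' but such a result is precisely what needs to be established. Three concrete ingredients are missing. First, your passage from $R_\infty^\chi = \Zp(\chi)[[P_\infty]]$ to a product of integral domains is not automatic: $P_\infty$ is an infinitely generated abelian pro-$p$ group that may have torsion, and one must first invoke a structure theorem for such groups (the paper uses Theorem 3.1 of \cite{Kiehlmann}) to split $P_\infty\simeq t(P_\infty)\times\widetilde{\Gamma_\infty}$ with $t(P_\infty)$ finite before character theory of the finite factor applies; ``enlarging scalars by character theory of $P_\infty$'' is not a well-defined operation here. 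Second, having reduced to the vanishing formula ${\rm ord}_{s=0}L_{S,\Sigma}(\chi,s)={\rm card}\{v\in S\mid \chi(G_v(L_\infty/k))=1\}$, one must exhibit a finite-order character that is nontrivial on every $G_v(L_\infty/k)$ for $v\in S$; this requires controlling the images of the decomposition groups $G_v(L_\infty/L_n)$ for $v\mid\frak f$, and the paper's Proposition \ref{decomposition groups prop} does so via the Artin map and the $\Zp$-linear independence of the $S_v$-units $x_v$ — an input that comes from the function-field analogue of Leopoldt's conjecture proved in \cite{Kisilevsky}. Nothing in your proposal replaces that. Third, your suggested alternative (transport the factorization $\Theta^{(n),\chi}(u)=P^\chi(u)Q^\chi(u)$ to the infinite level and run Lemma \ref{nzd polynomials}) does not work: that lemma exploits a leading-coefficient argument for polynomials in the variable $\gamma$ over $R[[\Gamma]]$, and after the specialization $\gamma\mapsto 1$ (equivalently $u\mapsto 1$) there is no variable left to give such a polynomial structure, so the argument degenerates. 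In short, parts (2) and the resolution set-up of (1) are essentially correct and match the paper, but part (1) is not proved: the proposal stops exactly at Proposition \ref{not a zero divisor prop}, which in the paper is a substantial argument with its own nontrivial inputs.
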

\begin{proof} Part (2) is an immediate consequence of exact sequence \eqref{infty Tate seq} and Theorem \ref{limit theorem 1} above. (Repeat the arguments in the proof of Corollary \ref{Fitt-Nabla-ell}.)

Part (1) is Proposition \ref{nabla torsion prop} in \S3.5 below, which itself is a consequence of the fact that $\Theta_{S, \Sigma}^{(\infty)}(1)$ is a non--zero divisor in $\Bbb Z_p[[G_\infty]]$, as proved in Proposition \ref{not a zero divisor prop} below. 
\end{proof}
\medskip

\subsection{Results on ideal class groups}
  We conclude this section by deriving an Iwasawa main conjecture in the spirit of \cite[Th.~1.4]{Angles}
  for the classical $\Bbb Z_p[[G_\infty]]$--module 
{$$\frak X_{p}^{(\infty)}:=\varprojlim_n\left( \Pic^0(L_n) \tensor_\Ze \Zp\right),$$}
where the projective limit is taken with respect to the usual norm maps.

  As in \eqref{Pl Delta decomposition} we let $\Delta$ denote the maximal subgroup of $G_0$ whose order is not divisible by $p$.
  Since $G(L_\infty/L_0)$ is a pro--$p$--group, we have $G_\infty \simeq \Delta\times G_\infty^{(p)}$, 
  where $G^{(p)}_\infty$ is the maximal pro--$p$ subgroup of $G_\infty$. As a consequence, we can view the element
  $e_\Delta:=\frac{1}{|\Delta|}\sum_{\delta\in\Delta}\delta$  as an idempotent in $\Bbb Z_p[[G_\infty]]$. This allows us to define the functor
$$M\mapsto M^\sharp:=(1-e_{\Delta})\cdot M$$
from the category of $\Bbb Z_p[[G_\infty]]$--modules to the category of modules over the quotient ring
$$\Bbb Z_p[[G_\infty]]^\sharp = (1-e_{\Delta})\Bbb Z_p[[G_\infty]].$$ 
Note that since $M=e_{\Delta}M\oplus M^\sharp$, for every $M$ as above,  the functor $M\mapsto M^\sharp$ is exact.\\

The study of $\frak X_{p}^{(\infty)}$ requires some additional hypotheses. We list them below. 
\begin{itemize}
\item [(a)] $\frf = \fre$.
\item [(b)] $\frp$ does not split in $H_\fre / k$.
\item[(c)] $p\nmid [H_\fre:k]=h_k d_\infty$.
\item[(d)] $p\nmid{\rm deg}(\frp)$.
\end{itemize}
Note that (a), (b) and (c) are satisfied in the basic example of the Carlitz module, see \S\ref{Carlitz subsection}.

\begin{theorem}[EMC-III]\label{limit theorem 3} Under hypotheses (a)--(d), the following hold for $S=\{\frak p\}$ and all nonempty sets $\Sigma$, disjoint from $S$.
\begin{enumerate}
\item $\frak X_{p}^{(\infty)}$ is a torsion $\Bbb Z_p[[G_\infty]]$--module of projective dimension $1$.
\item $\Fitt_{\Zp[[G_\infty]]}\left(\frak X_{p}^{(\infty)}\right) = \Theta_{S, \Sigma}^{(\infty)}(1) \cdot \Zp[[G_\infty]].$
\end{enumerate}
\end{theorem}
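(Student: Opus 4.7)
The plan is to deduce Theorem~\ref{limit theorem 3} from the already established Theorem~\ref{limit theorem 2} by producing a natural isomorphism of $\Bbb Z_p[[G_\infty]]$-modules
\[
\Nabla_S^{(\infty)} \simeq \frak X_{p}^{(\infty)}.
\]
Once this isomorphism is in hand, both assertions of EMC-III are immediate: the Fitting ideal statement transports tautologically, while projective dimension and torsion are preserved under isomorphism. All the real work reduces to checking, level by level, that hypotheses (a)--(d) force the Ritter-Weiss ``correction terms'' appearing in \eqref{RW-ses-ell} and \eqref{tilde-x-ses-ell} to vanish and that the $S$-Picard groups coincide with the ordinary Picard groups, then verifying naturality of these identifications with respect to the norm maps.

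First I would analyze how $\frak p$ decomposes in the tower $L_\infty/k$. By (a) we have $L_n = H_{\frak p^{n+1}}$ for all $n$. The real Hilbert class field $H_\fre/k$ is everywhere unramified, so by (b) the prime $\frak p$ is inert in $H_\fre/k$; the Drinfeld-modular description in \S\ref{Drinfeld modules and cft} then shows that $\frak p$ is totally ramified in $L_n/H_\fre$. Hence there is a unique prime $\frak P_n$ of $L_n$ above $\frak p$, whose residue field over $\Bbb F_q$ has degree
\[
d_\frak p\cdot [H_\fre:k] = d_\frak p\, h_k\, d_\infty.
\]
In particular, $d_S^{(n)} = d_\frak p\, h_k\, d_\infty$ is independent of $n$ and, by hypotheses (c) and (d), coprime to $p$.

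Next, since $p\nmid d_S^{(n)}$ we have $\Bbb Z_p/d_S^{(n)}\Bbb Z_p = 0$, and since $D_S(L_n) = \Bbb Z\cdot \frak P_n$ is free of rank one, the formal-degree-zero submodule $X_S^{(n)}$ vanishes as well. Sequence \eqref{tilde-x-ses-ell} then forces $\widetilde X_S^{(n)} = 0$. Likewise, the only degree-zero divisor supported at $\frak P_n$ is zero (as $\deg\frak P_n \ne 0$), so $\Pic^0_S(L_n) = \Pic^0(L_n)$. Plugging both vanishings into \eqref{RW-ses-ell} yields a canonical isomorphism $\Nabla_S^{(n)} \simeq \Pic^0(L_n) \otimes_{\Bbb Z} \Bbb Z_p$ of $\Bbb Z_p[G_n]$-modules. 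The main subtlety is to check that these level-$n$ identifications are compatible with the norm maps used on both sides; this should be essentially formal from the construction of the Ritter-Weiss modules reviewed in the Appendix, since all arrows in \eqref{RW-ses-ell} are natural in $n$ with respect to norms.

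Passing to the inverse limit over $n$ then produces the sought isomorphism $\Nabla_S^{(\infty)} \simeq \frak X_{p}^{(\infty)}$ of $\Bbb Z_p[[G_\infty]]$-modules, and both parts of Theorem~\ref{limit theorem 3} follow at once from Theorem~\ref{limit theorem 2}. Beyond the naturality check indicated above, the argument is a clean combination of the geometric EMC-II with the splitting-and-ramification analysis dictated by the numerical hypotheses (a)--(d).
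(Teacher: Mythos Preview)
Your proposal is correct and follows essentially the same route as the paper's proof: use hypotheses (a)--(b) to see that there is a single prime above $\frak p$ in each $L_n$, so $X_S^{(n)}=0$ and $D_S^0(L_n)=0$; use (c)--(d) to see that $p\nmid d_S^{(n)}$, so $\widetilde X_S^{(n)}=0$; conclude $\Nabla_S^{(n)}=\Pic^0(L_n)\otimes\Bbb Z_p$ from \eqref{RW-ses-ell} and \eqref{tilde-x-ses-ell}; pass to the limit and invoke Theorem~\ref{limit theorem 2}. Your explicit mention of the norm-compatibility check is a small elaboration the paper leaves implicit, but this does not amount to a different approach.
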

\begin{proof}
Since $L_n = H_{\frp^{n+1}}$ is unramified outside $\frp$, the set $S = \{ \frp \}$ satisfies all the desired requirements. 
In general,  each divisor of $\frp$ in $H_\fre$ is totally ramified in $H_{\frp^{n+1}}/ H_\fre$. Hence, hypotheses (a)--(b) imply that
$D_S^0(L_n) = 0$ and $X_S^{(n)} = 0$. Further, hypotheses (c)--(d) imply that if $\frak p_n$ is the unique prime in
$L_n$ sitting above $\frak p$, then 
$d_S^{(n)}={\rm deg}(\frak p_n)=[H_{\frak e}:k]\cdot{\rm deg}(\frak p)$
is not divisible by $p$. Consequently, \eqref{RW-ses-ell} and \eqref{tilde-x-ses-ell} give us
\[
  \Nabla_S^{(n)} = \Pic^0(L_n)\otimes\Bbb Z_p, \qquad \Nabla_S^{(\infty)}=\frak X_{p}^{(\infty)},
\]
and the result follows from Theorem \ref{limit theorem 2} above.
\end{proof}

Under the milder hypotheses (a)--(b), both satisfied in the basic case of the Carlitz module, a similar result holds,
away from the trivial character of $\Delta$.
\begin{theorem}[${\rm EMC-III}^\sharp$]\label{limit theorem 3 sharp} Under hypotheses (a)--(b),
  the following hold for $S=\{\frak p\}$ and all nonempty sets $\Sigma$, disjoint from $S$.
\begin{enumerate}
\item $\frak X_{p}^{(\infty), \sharp}$ is a torsion $\Bbb Z_p[[G_\infty]]^\sharp$--module of projective dimension $1$.
\item $\Fitt_{\Zp[[G_\infty]]^\sharp}\left(\frak X_{p}^{(\infty), \sharp}\right) = \Theta_{S, \Sigma}^{(\infty)}(1) \cdot \Zp[[G_\infty]]^\sharp.$
\end{enumerate}
\end{theorem}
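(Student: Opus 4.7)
The plan is to deduce Theorem \ref{limit theorem 3 sharp} from Theorem \ref{limit theorem 2} by establishing a $\Bbb Z_p[[G_\infty]]^\sharp$--module isomorphism
\[
\Nabla_S^{(\infty), \sharp}\simeq \frak X_p^{(\infty), \sharp}
\]
and then applying the exact base--change functor $M\mapsto M^\sharp$.

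First I would identify $\Nabla_S^{(n)}$ under hypotheses (a)--(b). Since $\frf=\fre$ and $\frp$ is inert in $H_\fre/k$, the prime $\frp$ admits a unique extension $\frp_n$ to each $L_n=H_{\frp^{n+1}}$ (inert in $H_\fre/k$, totally ramified in $L_n/H_\fre$). Consequently $D_S(L_n)=\Bbb Z\cdot\frp_n$, which forces $X_S^{(n)}=0$ and $D_S^0(L_n)=0$, so that $\Pic^0_S(L_n)=\Pic^0(L_n)$. The sequences \eqref{RW-ses-ell} and \eqref{tilde-x-ses-ell} then collapse to
\[
0\lra \Pic^0(L_n)\tensor\Zp\lra \Nabla_S^{(n)}\lra \Zp/d_S^{(n)}\Zp\lra 0,
\]
where $d_S^{(n)}=\deg(\frp_n)=h_kd_\infty d_\frp$ and $G_n$ (in particular $\Delta$) acts trivially on $\Zp/d_S^{(n)}\Zp$.

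Next I would kill the quotient by the $\sharp$ functor. Because $\Delta$ acts trivially on $\Zp/d_S^{(n)}\Zp$, we have $e_\Delta m=m$ there, and hence $(\Zp/d_S^{(n)}\Zp)^\sharp=0$. The exactness of $(-)^\sharp$ then yields an isomorphism $\Nabla_S^{(n), \sharp}\simeq (\Pic^0(L_n)\tensor\Zp)^\sharp$ for every $n\geq 0$. Since the transition maps in both projective systems are Galois-equivariant norm maps, they commute with the action of $e_\Delta$, so passing to the inverse limit produces the desired isomorphism $\Nabla_S^{(\infty),\sharp}\simeq \frak X_p^{(\infty),\sharp}$.

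With this identification in hand, the theorem follows by applying $(-)^\sharp$ to Theorem \ref{limit theorem 2}. Since $\Zp[[G_\infty]]\simeq e_\Delta\Zp[[G_\infty]]\oplus\Zp[[G_\infty]]^\sharp$ as rings, the functor $M\mapsto M^\sharp=M\tensor_{\Zp[[G_\infty]]}\Zp[[G_\infty]]^\sharp$ is exact and commutes with Fitting ideals. The length--one projective resolution of $\Nabla_S^{(\infty)}$ provided by Theorem \ref{limit theorem 2}(1) therefore becomes a length--one projective resolution of $\frak X_p^{(\infty),\sharp}$ over $\Zp[[G_\infty]]^\sharp$, giving ${\rm pd}\leq 1$. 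The image of $\Theta_{S,\Sigma}^{(\infty)}(1)$ in the direct summand $\Zp[[G_\infty]]^\sharp$ remains a non--zero divisor (a NZD in a product of rings is a NZD in each factor), so by base change it generates $\Fitt_{\Zp[[G_\infty]]^\sharp}(\frak X_p^{(\infty),\sharp})$ and forces torsion via Lemma \ref{torsion lemma}; the same lemma rules out projectivity when the module is nonzero, yielding ${\rm pd}=1$.

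The main obstacle is Step 1: one must verify, using the appendix material that generalizes \cite{GPff}, that the Ritter--Weiss term $\widetilde X_S^{(n)}$ reduces exactly to $\Zp/d_S^{(n)}\Zp$ with trivial $\Delta$--action, so that its $\sharp$--part vanishes. Once this structural identification is secured, Steps 2 and 3 are formal consequences of the idempotent decomposition of $\Zp[[G_\infty]]$ and of Theorem \ref{limit theorem 2}.
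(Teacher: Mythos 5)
Your proof is correct and takes essentially the same route as the paper: observe $X_S^{(n)}=0$ under (a)–(b), note that $\Delta$ acts trivially on $\Bbb Z_p/d_S^{(n)}\Bbb Z_p$ so its $\sharp$--part vanishes, apply the exact $\sharp$--functor to \eqref{RW-ses-ell} and \eqref{tilde-x-ses-ell} to get $\Nabla_S^{(\infty),\sharp}\simeq\frak X_p^{(\infty),\sharp}$, and then project Theorem \ref{limit theorem 2} onto the direct summand $\Bbb Z_p[[G_\infty]]^\sharp$. Your treatment spells out the base--change and non--zero--divisor bookkeeping a bit more explicitly than the paper, but the content is the same.
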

\begin{proof} Once again, we have $X_S^{(n)}=0$. Since $(\Bbb Z_p/d_S^{(n)}\Zp)^\sharp=0$ (as $\Delta$ acts trivially on
  $\Bbb Z_p/d_S^{(n)}\Zp$), when applying the exact functor $^\sharp$
to exact sequences \eqref{RW-ses-ell} and \eqref{tilde-x-ses-ell}, we obtain
\[
  \Nabla_S^{(n), \sharp} = (\Pic^0(L_n)\otimes\Bbb Z_p)^\sharp,
  \qquad \Nabla_S^{(\infty), \sharp}=\frak X_{p}^{(\infty),\sharp},
\]
and the result follows by projecting the equality in Theorem \ref{limit theorem 2} onto $\Bbb Z_p[[G_\infty]]^\sharp$.
\end{proof}

\begin{remark}\label{remark-Bandini-Coscelli}
     In this remark we use the assumptions of \cite{Bandini} and \cite{Coscelli}. More specifically, we assume
    $\frf = \fre$, $d_\infty = 1$ and     $p \nmid [H_\fre:k]$.
    Note that, for example, these assumptions hold in the case of the Carlitz module which is studied in \cite{Angles}.
    
    We let $G_\frp(L_0/k)$ and $I_\frp(L_0/k)$ denote the decomposition, respectively inertia group associated to $\frp$ in $L_0/k$.
    We observe that $\Delta = \Gal(L_0/k)$ has order prime to $p$ and let $\chi \in \hat\Delta$ be a character, such that
    $\chi|_{G_\frp(L_0/k)}$ is non-trivial. Since $I_\frp(L_0/k) = \Gal(L_0/H_\fre) \sseq G_\frp(L_0/k)$, the set of
    such characters $\chi$ includes the characters of type 2 as defined in \cite[Def.~3.1]{Bandini} or
    \cite[Def.~2.3.6]{Coscelli}. Since
    we only consider real ray--class fields, we do not see characters $\chi$ of type 1, as defined in loc.cit.
    Then, for all $n \in \mathbb{N} \cup \{\infty\}$ and $S = \{ \frp \}$, we have
    \begin{eqnarray*}
      \left( X_S^{(n)} \tensor_\Ze \Zp \right)^\chi &=& \left( D_S(L_n) \tensor_\Ze \Zp \right)^\chi \\
                                                       &\simeq& \Zp[\Gal(L_n/k) / G_\frp(L_n/k)]^\chi \\
                                                       &\simeq& \Zp[\Delta / G_\frp(L_0/k)]^\chi = 0.
    \end{eqnarray*}
    As a consequence, we have $\Nabla_S^{(\infty),\chi}=\frak X_{p}^{(\infty), \chi}$ and our Theorem \ref{limit theorem 2}(2) implies that 
    \[
      \Fitt_{\Zp(\chi)[[\Gamma_\infty]]}\left(\frak X_{p}^{(\infty), \chi}\right) =
      \Theta_{S, \Sigma}^{(\infty)}(1, \chi) \cdot \Zp(\chi)[[\Gamma_\infty]],
    \]
    for all characters $\chi$ as above. Thus we recover the central results of \cite[Thm.~1.1]{Angles},
      \cite{Bandini} and \cite[Thm.~2.4.8]{Coscelli} restricted to the real Iwasawa towers considered in loc.cit.
\end{remark}

\subsection{$\Theta_{S, \Sigma}^{(\infty)}(1)$ is a non-zero divisor}
The goal of this section is a proof of part (1) of Theorem \ref{limit theorem 2} above. We will first establish a structure theorem for
the Iwasawa algebra $\Bbb Z_p[[G_\infty]]$ whose  proof will be based  on the following result on pro-$p$ groups. 
\begin{theorem}[Theorem 3.1 of \cite{Kiehlmann}] \label{pro-p-theorem}
  Let $\mathcal G$ be a pro-$p$ group with countable (topological) basis, whose torsion subroup $t(\mathcal G)$ has bounded exponent.
  Then $t(\mathcal G)$ is a closed subgroup of $\mathcal G$ and we have an isomorphism of topological groups
$$\mathcal G\simeq t(\mathcal G)\times\Bbb Z_p^X,$$
where $X$ is a cardinal in the set $\Bbb N\cup\{\aleph_0\}$. 
\end{theorem}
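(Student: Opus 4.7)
The plan is to reduce the splitting to classical structural results for discrete abelian $p$-groups via Pontryagin duality, focusing on the abelian case (which suffices for the application to the Galois group $G_\infty$ in this paper). First, I would verify that $t(\mathcal G)$ is closed: since the exponent of $t(\mathcal G)$ is bounded by $p^N$ for some $N$, we have $t(\mathcal G) = \mathcal G[p^N] = \ker(x \mapsto x^{p^N})$, the kernel of a continuous endomorphism, hence a closed subgroup. In the abelian case $t(\mathcal G)$ is automatically normal, so the quotient $\mathcal G/t(\mathcal G)$ is a torsion-free pro-$p$ abelian group inheriting the countable basis property.

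Next, I would apply Pontryagin duality to the short exact sequence
\[
0 \lra t(\mathcal G) \lra \mathcal G \lra \mathcal G/t(\mathcal G) \lra 0.
\]
Setting $\mathcal G^\vee := \Hom_{\mathrm{cts}}(\mathcal G, \Qp/\Zp)$ with the discrete topology, this dualizes to an exact sequence of discrete abelian $p$-torsion groups
\[
0 \lra (\mathcal G/t(\mathcal G))^\vee \lra \mathcal G^\vee \lra t(\mathcal G)^\vee \lra 0,
\]
all of which are countable since $\mathcal G$ has a countable basis of open neighborhoods of the identity. The torsion-freeness of $\mathcal G/t(\mathcal G)$ translates into divisibility of $D := (\mathcal G/t(\mathcal G))^\vee$, and the bound $p^N$ on the exponent of $t(\mathcal G)$ becomes the statement that $t(\mathcal G)^\vee$ has exponent $p^N$.

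By the classical structure theorem for abelian groups, any divisible subgroup of a discrete abelian group splits off as a direct summand, so $\mathcal G^\vee \simeq D \oplus t(\mathcal G)^\vee$. The countable divisible $p$-torsion group $D$ is then isomorphic to $(\Qp/\Zp)^{(X)}$ for some $X \in \NN \cup \{\aleph_0\}$. Dualizing back and using $\bigl((\Qp/\Zp)^{(X)}\bigr)^\vee \simeq \Zp^X$, I obtain the desired isomorphism of topological groups
\[
\mathcal G \simeq t(\mathcal G) \times \Zp^X.
\]

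The main obstacle is ensuring that the purely algebraic splitting of $\mathcal G^\vee$ promotes to a topological splitting of $\mathcal G$. Since Pontryagin duality is a contravariant exact equivalence between compact Hausdorff abelian groups and discrete abelian groups, discrete direct-sum decompositions on one side correspond canonically to topological direct-product decompositions on the other, and the issue dissolves. The fully non-abelian version of the theorem, which is not needed here, would require a more delicate argument, perhaps filtering $\mathcal G$ by its lower central series and inductively splitting off abelian quotients, with the bounded-exponent hypothesis on $t(\mathcal G)$ remaining crucial for controlling the torsion at each stage.
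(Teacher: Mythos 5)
The paper does not prove this statement itself; it cites it as Theorem~3.1 of \cite{Kiehlmann} (Kiehlmann's paper concerns countably based \emph{abelian} profinite groups, and the word ``abelian'' has been tacitly dropped from the statement here — necessarily so, since for a non-abelian pro-$p$ group the torsion elements need not form a subgroup at all). There is therefore no ``paper proof'' to compare with; you have supplied an argument where the paper simply gives a reference, and in the paper's actual application (to the abelian group $P_\infty$ in Proposition~\ref{G-infty-algebra-prop}) only the abelian case is used, so your restriction to that case is exactly right.

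Your argument is correct. The closedness of $t(\mathcal G)=\mathcal G[p^N]$ as the kernel of the continuous endomorphism $x\mapsto x^{p^N}$ is fine for abelian $\mathcal G$. Pontryagin duality turns the short exact sequence $0\to t(\mathcal G)\to\mathcal G\to \mathcal G/t(\mathcal G)\to 0$ into an exact sequence of discrete, countable, $p$-primary abelian groups (countability because $\mathcal G$ has a countable basis of open subgroups, so $\mathcal G^\vee=\varinjlim(\mathcal G/U)^\vee$ is a countable union of finite groups); torsion-freeness of $\mathcal G/t(\mathcal G)$ dualizes to divisibility of $D:=(\mathcal G/t(\mathcal G))^\vee$; divisible subgroups split off, so $\mathcal G^\vee\simeq D\oplus C$ with $C\simeq \mathcal G^\vee/D\simeq t(\mathcal G)^\vee$; a countable divisible $p$-torsion group is $(\Qp/\Zp)^{(X)}$ with $X\in\NN\cup\{\aleph_0\}$, whose dual is $\Zp^X$; and dualizing back (using that duality is an exact anti-equivalence, so algebraic direct-sum decompositions on the discrete side produce genuine topological product decompositions on the compact side) gives $\mathcal G\simeq t(\mathcal G)\times\Zp^X$ as topological groups. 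The bounded-exponent hypothesis is used exactly where you use it: to know $t(\mathcal G)=\mathcal G[p^N]$ is closed, hence that the sequence of compact groups you dualize is meaningful. Your closing caveat about the non-abelian version is apt but moot for this paper, since the statement is really an abelian result.
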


Here is the promised structure theorem for the Iwasawa algebra $\Bbb Z_p[[G_\infty]]$.

\begin{prop}\label{G-infty-algebra-prop} The following hold.
\begin{enumerate}
\item There are closed subgroups $\widetilde{G_0}$ and $\widetilde{\Gamma_\infty}$ of $G_\infty$, such that 
$$G_\infty=\widetilde{G_0}\times\widetilde{\Gamma_\infty},$$
with $\widetilde{\Gamma_\infty}\simeq \Bbb Z_p^{\aleph_0}$ (topologically), $[\Gamma_\infty:\Gamma_\infty\cap\widetilde{\Gamma_\infty}]<\infty$,  $[\widetilde{\Gamma_\infty}:\Gamma_\infty\cap\widetilde{\Gamma_\infty}]<\infty$, and $\widetilde{G_0}$ is isomorphic  to a subgroup of $G_0$.\\

\item There is an injective morphism of topological $\Bbb Z_p$--algebras
$$\Bbb Z_p[[G_\infty]]\simeq \Bbb Z_p[\widetilde{G_0}][[\widetilde{\Gamma_\infty}]]\hookrightarrow \bigoplus_{\rho\in\widehat{\widetilde G_0}(\Bbb Q_p)}\Bbb Z_p(\rho)[[\widetilde{\Gamma_\infty}]]\simeq \bigoplus_{\rho\in\widehat{\widetilde G_0}(\Bbb Q_p)}\Bbb Z_p(\rho)[[X_1, X_2, \dots]],$$
where the injective map in the middle is given by the usual $\rho$--evaluation maps.
\end{enumerate}

\end{prop}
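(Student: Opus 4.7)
The plan is to build the decomposition in (1) by splitting off the prime-to-$p$ part of $G_0$ and then applying the pro-$p$ structure theorem to the resulting pro-$p$ factor. Concretely, let $\Delta$ be the (unique) maximal subgroup of $G_0$ whose order is prime to $p$ and let $P_0$ be the $p$-Sylow subgroup, so $G_0=\Delta\times P_0$. Since $\Gamma_\infty$ is pro-$p$, the preimage of $\Delta$ in $G_\infty$ splits as $\Delta\times G_\infty^{(p)}$, where $G_\infty^{(p)}$ is the maximal pro-$p$ subgroup of $G_\infty$; it fits in a short exact sequence
\[
0\to\Gamma_\infty\to G_\infty^{(p)}\to P_0\to 0.
\]
Because $\Gamma_\infty\simeq\Bbb Z_p^{\aleph_0}$ is torsion free, the torsion subgroup $t(G_\infty^{(p)})$ injects into $P_0$, and is therefore finite (hence of bounded exponent and automatically closed). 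Moreover, $G_\infty^{(p)}$ has a countable topological basis (inherited from $\Gamma_\infty$ together with finitely many generators of $P_0$). The hypotheses of Theorem \ref{pro-p-theorem} are thus satisfied, and we obtain a topological isomorphism
\[
G_\infty^{(p)}\simeq t(G_\infty^{(p)})\times \Bbb Z_p^{X}
\]
for some $X\in\Bbb N\cup\{\aleph_0\}$.

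Next I would set $\widetilde{P_0}:=t(G_\infty^{(p)})$, let $\widetilde{\Gamma_\infty}$ be the $\Bbb Z_p^X$-factor, and put $\widetilde{G_0}:=\Delta\times \widetilde{P_0}$, so that $G_\infty=\widetilde{G_0}\times\widetilde{\Gamma_\infty}$. The projection $G_\infty^{(p)}\twoheadrightarrow P_0$ sends $\widetilde{P_0}$ injectively into $P_0$ (torsion-freeness of $\widetilde{\Gamma_\infty}$), so $\widetilde{G_0}\hookrightarrow G_0$. Both quotients $G_\infty^{(p)}/\Gamma_\infty=P_0$ and $G_\infty^{(p)}/\widetilde{\Gamma_\infty}=\widetilde{P_0}$ are finite, so $\Gamma_\infty/(\Gamma_\infty\cap\widetilde{\Gamma_\infty})$ embeds into $\widetilde{P_0}$ and $\widetilde{\Gamma_\infty}/(\Gamma_\infty\cap\widetilde{\Gamma_\infty})$ embeds into $P_0$; hence both indices are finite. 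Finally, since $\Gamma_\infty\cap\widetilde{\Gamma_\infty}$ is then a finite-index closed subgroup of $\Gamma_\infty\simeq\Bbb Z_p^{\aleph_0}$ and also of $\widetilde{\Gamma_\infty}\simeq \Bbb Z_p^X$, the topological Iwasawa structure theorem (or a direct argument using that a finite-index closed subgroup of $\Bbb Z_p^{\aleph_0}$ must itself be abstractly $\Bbb Z_p^{\aleph_0}$) forces $X=\aleph_0$, as required.

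For part (2), the decomposition $G_\infty=\widetilde{G_0}\times\widetilde{\Gamma_\infty}$ with $\widetilde{G_0}$ finite immediately gives the topological $\Bbb Z_p$-algebra identification $\Bbb Z_p[[G_\infty]]\simeq \Bbb Z_p[\widetilde{G_0}][[\widetilde{\Gamma_\infty}]]$. Since $\widetilde{G_0}$ is a finite abelian group, the character evaluation map
\[
\Bbb Z_p[\widetilde{G_0}]\longrightarrow \bigoplus_{\rho\in\widehat{\widetilde{G_0}}(\Bbb Q_p)}\Bbb Z_p(\rho),\qquad x\mapsto (\rho(x))_\rho,
\]
is an injective homomorphism of $\Bbb Z_p$-algebras (it becomes the usual Wedderburn decomposition after tensoring with $\Bbb Q_p$, and its source is $\Bbb Z_p$-free, hence torsion-free). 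Because $\Bbb Z_p[[\widetilde{\Gamma_\infty}]]$ is $\Bbb Z_p$-flat and $\Bbb Z_p[\widetilde{G_0}]$ is free of finite rank over $\Bbb Z_p$, tensoring with $\Bbb Z_p[[\widetilde{\Gamma_\infty}]]$ preserves injectivity and the direct sum is preserved (being finite), yielding an injection of topological $\Bbb Z_p$-algebras
\[
\Bbb Z_p[\widetilde{G_0}][[\widetilde{\Gamma_\infty}]]\hookrightarrow \bigoplus_{\rho\in\widehat{\widetilde{G_0}}(\Bbb Q_p)}\Bbb Z_p(\rho)[[\widetilde{\Gamma_\infty}]].
\]
The final identification of each $\Bbb Z_p(\rho)[[\widetilde{\Gamma_\infty}]]$ with $\Bbb Z_p(\rho)[[X_1,X_2,\dots]]$ is precisely Proposition \ref{big-Iwasawa-algebra-prop}(1), applied to $\mathcal O=\Bbb Z_p(\rho)$ and $\mathcal G=\widetilde{\Gamma_\infty}\simeq \Bbb Z_p^{\aleph_0}$.

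The main obstacle I foresee is the verification that $X=\aleph_0$: one must rule out a finite $X$, which requires that finite-index closed subgroups of $\Bbb Z_p^{\aleph_0}$ cannot be isomorphic to $\Bbb Z_p^n$ for any $n<\infty$. This is best seen either by a cardinality/closure argument (a closed subgroup of finite index would have to surject onto all but finitely many factors, which is impossible for $\Bbb Z_p^n$) or by noting that $\Gamma_\infty/(\Gamma_\infty\cap\widetilde{\Gamma_\infty})$ being finite forces $\widetilde{\Gamma_\infty}$ to contain a topologically finitely-cogenerated quotient equal to $\Gamma_\infty$ up to finite error, giving $\widetilde{\Gamma_\infty}\simeq \Bbb Z_p^{\aleph_0}$.
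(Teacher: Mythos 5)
Your proof is correct and follows essentially the same route as the paper: both split off the prime-to-$p$ part $\Delta$, apply Theorem \ref{pro-p-theorem} to the maximal pro-$p$ subgroup $G_\infty^{(p)}$ (the paper's $P_\infty=\varprojlim_n P_n$), and then read off the finite-index statements and the algebra embedding in (2) from Proposition \ref{big-Iwasawa-algebra-prop}. You go a bit further than the paper in explicitly ruling out $X<\aleph_0$ in the invocation of Theorem \ref{pro-p-theorem} — the paper asserts $\widetilde{\Gamma_\infty}\simeq\Bbb Z_p^{\aleph_0}$ without comment, while your observation that $G_\infty^{(p)}$ is not topologically finitely generated (or that a finite-index closed subgroup of $\Bbb Z_p^{\aleph_0}$ has infinite mod-$p$ quotient) closes that small gap; the only cosmetic slip is the phrase ``preimage of $\Delta$ in $G_\infty$'', where you mean the splitting $G_\infty=\Delta\times G_\infty^{(p)}$ of $G_\infty$ itself.
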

\begin{proof} Part (2) is a clear consequence of part (1) and Proposition \ref{big-Iwasawa-algebra-prop}. 
For the proof of (1), note that, with notations as in \S3.2 above, if we let $P_\infty:=\varprojlim_n P_n$, then we have 
\begin{equation}\label{iso-P-infty}G_\infty\simeq P_\infty\times \Delta, \qquad P_\infty/\Gamma_\infty\simeq P_0.\end{equation}
Recall that $\Delta$ is the complement of the $p$--Sylow subgroup $P_n$ of $G_n$, for all $n\geq 0$. Since $\Gamma_\infty$ is torsion free, the second isomorphism above implies that 
$t(P_\infty)$ is isomorphic to a subgroup of $P_0$, therefore it is finite and, obviously, of bounded exponent. Since $\Gamma_\infty$ has countable basis, the second isomorphism above implies that $P_\infty$ has countable basis as well. 
Consequently, Theorem \ref{pro-p-theorem} applied to $\mathcal G:=P_\infty$ gives a topological isomorphism 
\begin{equation}\label{P-infty-structure}P_\infty\simeq t(P_\infty)\times\widetilde{\Gamma_\infty},\end{equation}
where $\widetilde{\Gamma_\infty}\simeq \Bbb Z_p^{\aleph_0}$. Consequently, we have 
$$G_\infty=\widetilde{G_0}\times\widetilde{\Gamma_\infty}, \qquad \text{for  }\widetilde{G_0}:=t(G_\infty)\simeq t(P_\infty)\times\Delta.$$
Obviously, $\widetilde{G_0}$ is isomorphic to a subgroup of $G_0\simeq P_0\times\Delta$.  The fact that $[\Gamma_\infty:\Gamma_\infty\cap\widetilde{\Gamma_\infty}]<\infty$ and  $[\widetilde{\Gamma_\infty}:\Gamma_\infty\cap\widetilde{\Gamma_\infty}]<\infty$ follows immediately from \eqref{iso-P-infty} and \eqref{P-infty-structure}.
\end{proof}

\begin{remark}\label{phi-remark}  From now on, we let
  \[
    \varphi=(\varphi_\rho)_\rho:\Bbb Z_p[[G_\infty]]\simeq\Bbb Z_p[\widetilde{G_0}][[\widetilde{\Gamma_\infty}]]
    \hookrightarrow
    \bigoplus_{{\rho\in\widehat{\widetilde G_0}(\Qp)}}\Bbb Z_p(\rho)[[\widetilde{\Gamma_\infty}]]=:\overline{\Bbb Z_p[[G_\infty]]}
    \]
denote the character evaluation  map described above. Proposition \ref{big-Iwasawa-algebra-prop}(2)--(3) implies that the direct summands of  $\overline{\Bbb Z_p[[G_\infty]]}$ are integral, normal domains.  (Notice that $\overline{\Bbb Z_p[[G_\infty]]}$ is the integral closure of $\Bbb Z_p[[G_\infty]]$ in its total ring of fractions, which justifies the notation.)
\end{remark}

\begin{prop}\label{not a zero divisor prop}
   $\Theta_{S, \Sigma}^{(\infty)}(1)$ is a non-zero divisor in $\overline{\Zp[[G_\infty]]}$ and therefore in $\Bbb Z_p[[G_\infty]]$.
\end{prop}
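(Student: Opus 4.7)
The plan is to exploit the structure of $\overline{\Zp[[G_\infty]]}$ provided by Remark \ref{phi-remark}: this ring decomposes as the direct sum, indexed by $\rho \in \widehat{\widetilde{G_0}}(\Qp)$, of the integral domains $\Zp(\rho)[[\widetilde{\Gamma_\infty}]]$. Hence it suffices to prove that $\varphi_\rho\bigl(\Theta_{S,\Sigma}^{(\infty)}(1)\bigr) \ne 0$ in each summand. Because $\Zp(\rho)[[\widetilde{\Gamma_\infty}]] \simeq \Zp(\rho)[[X_1, X_2, \dots]]$ by Proposition \ref{big-Iwasawa-algebra-prop}(1), and non-vanishing of an element in this ring is detected by some continuous finite-order character of $\widetilde{\Gamma_\infty}$ (after extending scalars to $\overline{\Qp}$ and decomposing each finite group algebra $\Zp(\rho)[\widetilde{\Gamma_\infty}/N]$ into one-dimensional character pieces), the task reduces to exhibiting, for each $\rho$, a finite-order character $\psi \colon \widetilde{\Gamma_\infty} \to \overline{\Qp}^\times$ such that the resulting finite-order character $\chi := \rho \cdot \psi$ of $G_\infty$, which factors through some $G_n$, satisfies
\[
\chi\bigl(\Theta_{S,\Sigma}^{(n)}(1)\bigr) = L_{S,\Sigma}\bigl(\chi^{-1}, 0\bigr) \ne 0,
\]
the equality being an immediate consequence of \eqref{StickelArtin} at $u=1$ combined with Lemma \ref{Theta functoriality}.

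Next, the Euler-product factorization
\[
L_{S,\Sigma}(\chi^{-1}, 0) = \prod_{v \in \Sigma}\bigl(1 - \chi^{-1}(\sigma_v) Nv\bigr) \cdot L(\chi^{-1}, 0) \cdot \prod_{\substack{v \in S\\ \chi \text{ unram.\ at } v}}\bigl(1 - \chi^{-1}(\sigma_v)\bigr)
\]
exposes the three possible sources of vanishing. The $\Sigma$-factors are never zero, as $\chi^{-1}(\sigma_v)$ is a root of unity while $Nv = q^{d_v} \ge 2$. By Weil's Riemann Hypothesis for function fields, the primitive Artin $L$-value $L(\chi^{-1}, 0)$ is non-zero as soon as $\chi \ne 1$. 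The problem therefore reduces to choosing $\psi$ so that $\chi \ne 1$ and $\chi(\sigma_v) \ne 1$ for every $v \in S$ at which $\chi$ is unramified.

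Since $S = \{\frp\} \cup \{v : v \mid \frf\}$, I analyze the two types of primes separately. The prime $\frp$ is totally ramified in $L_\infty/L_0$, so its inertia group in $G_\infty$ contains $\Gamma_\infty$ and hence $\widetilde{\Gamma_\infty}$ up to finite index; any non-trivial $\psi$ then makes $\chi$ ramified at $\frp$, removing the corresponding Euler factor. For $v \mid \frf$, by contrast, $v$ is unramified in $L_\infty/L_0$, so its inertia in $G_\infty$ is a finite subgroup sitting inside $\widetilde{G_0}$, and whether $\chi$ is unramified at $v$ depends only on $\rho$. When $\chi$ is unramified at $v$, I decompose $\sigma_v = \sigma_v^{(0)} \cdot \sigma_v^{(\gamma)}$ relative to the product $G_\infty = \widetilde{G_0} \times \widetilde{\Gamma_\infty}$, so that the condition $\chi(\sigma_v) \ne 1$ becomes $\psi(\sigma_v^{(\gamma)}) \ne \rho(\sigma_v^{(0)})^{-1}$. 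The crucial input here is Proposition \ref{decomposition groups prop}(1): it guarantees that $\sigma_v^{(\gamma)}$ has infinite order, generating a copy of $\Zp$ inside $\widetilde{\Gamma_\infty}$.

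Constructing the desired $\psi$ is then a finite avoidance problem inside the pro-$p$ group $\widetilde{\Gamma_\infty} \simeq \Zp^{\aleph_0}$: select an open normal subgroup $N \subseteq \widetilde{\Gamma_\infty}$ in which each of the finitely many elements $\sigma_v^{(\gamma)}$ (for $v \mid \frf$ with $\chi$ unramified at $v$) has non-trivial image, and then pick a finite-order character of the abundant quotient $\widetilde{\Gamma_\infty}/N$ that avoids the finitely many forbidden values $\rho(\sigma_v^{(0)})^{-1}$ and is itself non-trivial. The main obstacle in this approach is obtaining enough control on the Frobenius at the ramified primes $v \mid \frf$ at the top of the tower; this is precisely the content of Proposition \ref{decomposition groups prop}, and once it is available the remaining counting argument inside a large finite abelian $p$-group is elementary.
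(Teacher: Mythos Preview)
Your reduction coincides with the paper's: show $\varphi_\rho(\Theta_{S,\Sigma}^{(\infty)}(1)) \ne 0$ for each $\rho$ by exhibiting a finite-order $\psi$ on $\widetilde{\Gamma_\infty}$ with $L_{S,\Sigma}((\rho\psi)^{-1},0)\ne 0$. Your Euler-product analysis is exactly the content of the paper's Lemma~\ref{order of vanishing}, since for $v\in S$ the condition ``$\chi$ unramified at $v$ and $\chi(\sigma_v)=1$'' is equivalent to $\chi|_{G_v}=1$.

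The construction of $\psi$ takes a genuinely different route. The paper descends to a finite level $L_m/L_n$ with $n$ large enough that $L_\infty^{\widetilde{\Gamma_\infty}}\subseteq L_n$, and builds a single character $\chi$ of $G(L_m/L_n)$ that is non-trivial on every $G_v(L_m/L_n)$, using the \emph{independence} statement of Proposition~\ref{decomposition groups prop}(2). Any lift $\psi$ of this $\chi$ to $\widetilde{\Gamma_\infty}$ then works for every $\rho$ at once, because $(\rho\psi)|_{G_v(L_m/L_n)}=\chi|_{G_v(L_m/L_n)}\ne 1$. Your argument instead fixes $\rho$, splits each Frobenius as $\sigma_v=\sigma_v^{(0)}\sigma_v^{(\gamma)}$, and runs a $\rho$-dependent avoidance using only Proposition~\ref{decomposition groups prop}(1) (infinite order of $\sigma_v^{(\gamma)}$). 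This also works, once the counting is made precise: shrinking $N$ so that each $\sigma_v^{(\gamma)}$ has order exceeding $f$ in $\widetilde{\Gamma_\infty}/N$ makes the union of the forbidden cosets strictly smaller than the character group.

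There is one imprecision in your treatment of $\frp$. You assert that ``any non-trivial $\psi$ makes $\chi$ ramified at $\frp$'', but $I_\frp\supseteq\Gamma_\infty$ only gives that $I_\frp\cap\widetilde{\Gamma_\infty}$ has finite index in $\widetilde{\Gamma_\infty}$; a non-trivial $\psi$ may still be trivial on this finite-index subgroup, and then (for $\rho$ trivial on the $\widetilde{G_0}$-projection of $I_\frp$) $\chi$ would be unramified at $\frp$. The fix is to add one more avoidance constraint, requiring $\psi$ non-trivial on $I_\frp\cap\widetilde{\Gamma_\infty}$, which excludes only finitely many characters. The paper bypasses this entirely: since $L_m/L_n$ is totally ramified at $\frp$, one has $G_\frp(L_m/L_n)=G(L_m/L_n)$, so any non-trivial character of $G(L_m/L_n)$ is automatically non-trivial on the $\frp$-decomposition group.
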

\begin{proof} Proposition \ref{G-infty-algebra-prop}(2) implies that the statement to be proved is equivalent to 
 $$\varphi_\rho(\Theta_{S, \Sigma}^{(\infty)}(1))\ne 0 \text{ in }\Bbb Z_p(\rho)[[\widetilde{\Gamma_\infty}]],$$
 for all characters $\rho\in\widehat{\widetilde{G_0}}$. Of course, this is equivalent to proving that for every $\rho$ as above, there exists a character $\psi$ of $\widetilde{\Gamma_\infty}$, of open kernel (so, of finite order), such
 that 
 $$\psi(\varphi_\rho(\Theta_{S, \Sigma}^{(\infty)}(1))\ne 0 \text{ in }\Bbb Z_p(\rho\psi).$$
 However, from the definition of $\Theta_{S, \Sigma}^{(\infty)}$, for all $\rho$ and $\psi$ as above, we have an equality
 $$\psi(\varphi_\rho(\Theta_{S, \Sigma}^{(\infty)}(1))=L_{S,\Sigma}({(\rho\psi)^{-1}}, 0),$$
 where $\rho\psi$ is viewed as a complex--valued character of the finite quotient  
 $$\widetilde{G_0}\times(\widetilde{\Gamma_\infty}/\ker\psi)=G_\infty/\ker\psi$$
 of $G_\infty$
 (under a fixed field isomorphism $\Bbb C_p\simeq\Bbb C$) and  $L_{S,\Sigma}(\rho\psi, s)$ is the complex--valued Artin $L$--function ($S$--imprimitive and $\Sigma$--completed) associated to $\rho\psi$. (See equalities \eqref{poly versus L fct} above.)
 \smallskip
 
  Now, the following Lemma is a well--known description of the order of vanishing at $s=0$ of the Artin $L$--functions in question.
 For the number field case of this result, see \cite[Ch.~I, Prop.~3.4]{Ta84}
   and for the function field case, relevant in our
   context, see \cite[Sec.~2.2]{Po11}.
 \begin{lemma}\label{order of vanishing} If $\chi$ is a non--trivial character of $G_\infty$ with open kernel, then 
 $${\rm ord}_{s=0}\, L_{S, \Sigma}(\chi, s)={\rm card}\,\{v\in S\mid \chi(G_v(L_\infty/k))=1\},$$
 where $G_v(L_\infty/k)$ denotes the decomposition group of $v$ inside $G_\infty=G(L_\infty/k)$.
 \end{lemma} 
 
 Consequently, we claim that it suffices to find a finite subextension $L/K$ of $L_\infty/L_\infty^{\widetilde{\Gamma_\infty}}$ and a character $\chi$ of $G(L/K)$, such that the following conditions are simultaneously satisfied.
 \begin{enumerate}
 \item[(A1)] $\widetilde{L_0}:=L_\infty^{\widetilde{\Gamma\infty}}\subseteq K\subseteq L\subseteq L_\infty$ and $L/\widetilde{L_0}$ finite.
 \item[(A2)] $\chi(G_v(L/K))\ne \{1\}$, for all $v\in S$.
 \end{enumerate}
 \medskip
  Indeed, if we construct an $L/K$ and $\chi$ as above, then for any given character $\rho$ of $\widetilde G_0\simeq G(\widetilde{L_0}/k)$, we take any character $\psi$ of $G(L/\widetilde{L_0})$,
   such that $\psi\mid_{G(L/K)}=\chi$. Now, $\rho\psi$ is a character of $G(L/k)$ which satisfies the property
 $$\rho\psi (G_v(L/K))=\psi(G_v(L/K))=\chi(G_v(L/K))\ne \{1\}, \qquad \text{ for all } v\in S.$$ 
 Since $G_v(L/K)\subseteq G_v(L/k)$, for all $v\in S$, Lemma \ref{order of vanishing} gives us the desired nonvanishing
 $$\psi(\varphi_\rho(\Theta_{S, \Sigma}^{(\infty)}(1))=L_{S,\Sigma}({(\rho\psi)^{-1}}, 0)\ne 0.$$

 Now, since 	$[\widetilde{\Gamma_\infty}:\widetilde{\Gamma_\infty}\cap\Gamma_\infty]<\infty$, the existence of $L/K$ satisfying conditions (A1)-(A2) above is ensured if we can find two integers
 $m>n$ and a character $\chi$ of $G(L_m/L_n)$, such that
   \begin{enumerate}
 \item[(B1)] $n$ is large enough, so that $L_\infty^{\Gamma_\infty\cap\widetilde{\Gamma_\infty}}\subseteq L_n$. (Note that $\widetilde{L_0}\subseteq L_\infty^{\Gamma_\infty\cap\widetilde{\Gamma_\infty}}$.)
 \item[(B2)] $\chi(G_v(L_m/L_n))\ne \{1\}$, for all primes  $v\in S$.
 \end{enumerate}
 
 Now, we proceed to constructing $m$ and $n$ as above. First, we fix an $n\geq 0$, large enough so that (B1) is satisfied. Now, we apply Proposition \ref{decomposition groups prop} to get topological group isomorphisms 
 \begin{equation}\label{decomposition groups infty}G_{\frak f}(L_\infty/L_n)=\prod_{v\vert\frak f}G_v(L_\infty/L_n)\simeq\prod_{v\vert\frak f}\Bbb Z_p,\end{equation}
 where the notations are as in loc.cit. Since the $p$--adic and the profinite topologies on $G_{\frak f}(L_\infty/L_n)$ coincide, there exists an $m>n$, such that 
 \begin{equation}\label{topologies} G_{\frak f}(L_\infty/L_n)\cap G(L_\infty/L_m)\subseteq p\cdot G_{\frak f}(L_\infty/L_n).\end{equation}
 We let $G_{\frak f}(L_m/L_n)$ denote the subgroup of $G(L_m/L_n)$ generated by the decomposition groups $G_v(L_m/L_n)$, for all $v\vert\frak f$. From the definitions, we have a group morphism 
 $$G_{\frak f}(L_m/L_n)\simeq \frac{G_{\frak f}(L_\infty/L_n)}{G_{\frak f}(L_\infty/L_n)\cap G(L_\infty/L_m)}\twoheadrightarrow \frac{G_{\frak f}(L_\infty/L_n)}{p\cdot G_{\frak f}(L_\infty/L_n)}=\prod_{v\vert\frak f}\frac{G_v(L_\infty/L_n)}{p\cdot G_v(L_\infty/L_n)}, $$
 where the isomorphism to the left is induced by Galois restriction, the surjection is induced by the inclusion \eqref{topologies} and the equality is a consequence of \eqref{decomposition groups infty}.

 It is easily seen that for all $v\vert\frak f$ the above morphism maps 
 $G_v(L_m/L_n)$ onto the quotient $G_v(L_\infty/L_n)/pG_v(L_\infty/L_n)$ which by Proposition \ref{decomposition groups prop} is isomorphic to $\Bbb Z/p\Bbb Z$.
 Consequently, there is a character $\psi$ of $G_{\frak f}(L_m/L_n)$, such that 
 $\psi(G_v(L_m/L_n))=\{\zeta\in\Bbb C_p\mid \zeta^p=1\}$ for all  $v\vert\frak f$.
 Now, take any character $\chi$ of $G(L_m/L_n)$ which equals $\psi$ when restricted to $G_{\frak f}(L_m/L_n)$. This character obviously satisfies (B2) for all $v\vert\frak f$. Since it is non--trivial on $G(L_m/L_n)$ and $G_{\frak p}(L_m/L_n)=G(L_m/L_n)$ (recall that $L_m/L_n$ is totally ramified at the $\frak p$--adic primes), 
 the character $\chi$ also satisfies (B2) for $v=\frak p$. This concludes the proof of Proposition 3.22.
\end{proof}

We conclude this section with a corollary to Proposition \ref{not a zero divisor prop}.

\begin{prop}\label{nabla torsion prop}
  The $\Bbb Z_p[[G_\infty]]$--module $\Nabla_S^{(\infty)}$ is finitely generated,  torsion, and of projective dimension $1$.
\end{prop}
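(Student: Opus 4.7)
The strategy is to take $\Gamma$-coinvariants of the length-one $\Lambda$-projective resolution of $T_p(M_S^{(\infty)})$ constructed in the proof of Theorem \ref{limit theorem 1}, and then to exploit the non-zero divisor property of $\Theta_{S,\Sigma}^{(\infty)}(1)$ supplied by Proposition \ref{not a zero divisor prop}. Concretely, I would start from the $\chi$-component resolution \eqref{Lambda-chi-ses} of $\Lambda^\chi$-modules and apply the base change functor $-\otimes_{\Lambda^\chi}R_\infty^\chi$, identifying $R_\infty^\chi = \Lambda^\chi/(1-\gamma)\Lambda^\chi = \Bbb Z_p(\chi)[[P_\infty]]$. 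The exact sequence \eqref{infty Tate seq} identifies the base change of $T_p(M_S^{(\infty)})$ with its $\Gamma$-coinvariants, which equal $\Nabla_S^{(\infty)}$. Summing the resulting right-exact sequences over the characters $\chi$ of $\Delta$ produces the right-exact sequence of $\Bbb Z_p[[G_\infty]]$-modules
\[
\bigoplus_\chi (R_\infty^\chi)^{m_\chi} \stackrel{\bar\Phi}{\lra} \bigoplus_\chi (R_\infty^\chi)^{m_\chi} \lra \Nabla_S^{(\infty)} \lra 0,
\]
in which, on each $\chi$-component, $\bar\Phi$ is represented by the square matrix $1 - A_\gamma^{(\infty),\chi}\big|_{\gamma=1}$.

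The crucial step is to upgrade this to a short exact sequence, i.e. to show that $\bar\Phi$ is injective. The $\chi$-component of $\bar\Phi$ is a square endomorphism of $(R_\infty^\chi)^{m_\chi}$ whose determinant generates $\Fitt_{R_\infty^\chi}(\Nabla_S^{(\infty),\chi})$; by Theorem \ref{limit theorem 2}(2), this Fitting ideal equals $\Theta_{S,\Sigma}^{(\infty),\chi}(1)\cdot R_\infty^\chi$. By Proposition \ref{not a zero divisor prop}, $\Theta_{S,\Sigma}^{(\infty)}(1)$ is a non-zero divisor in $\Bbb Z_p[[G_\infty]] = \bigoplus_\chi R_\infty^\chi$, and therefore so is each $\Theta_{S,\Sigma}^{(\infty),\chi}(1)$ in $R_\infty^\chi$; being unit multiples of these, the determinants are non-zero divisors as well. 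The adjugate matrix identity, used exactly as in the proof of Corollary \ref{non-zero divisors}(2), then forces each $\chi$-component of $\bar\Phi$ to be injective, whence $\bar\Phi$ itself is. This yields a short exact sequence
\[
0 \lra \bigoplus_\chi (R_\infty^\chi)^{m_\chi} \stackrel{\bar\Phi}{\lra} \bigoplus_\chi (R_\infty^\chi)^{m_\chi} \lra \Nabla_S^{(\infty)} \lra 0,
\]
whose outer terms are $\Bbb Z_p[[G_\infty]]$-projective, each $R_\infty^\chi = e_\chi\Bbb Z_p[[G_\infty]]$ being a direct summand of $\Bbb Z_p[[G_\infty]]$. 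Hence $\Nabla_S^{(\infty)}$ is finitely generated of projective dimension at most one.

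The torsion assertion then follows at once from Lemma \ref{torsion lemma} applied to $R = \Bbb Z_p[[G_\infty]]$ and $X = \Nabla_S^{(\infty)}$, using the non-zero divisor $\Theta_{S,\Sigma}^{(\infty)}(1) \in \Fitt_R(X)$; the same lemma forbids any nonzero torsion module from being $R$-projective, so $\mathrm{pd}_R(\Nabla_S^{(\infty)}) = 1$ as long as $\Nabla_S^{(\infty)}\ne 0$. Nonvanishing is equivalent to $\Theta_{S,\Sigma}^{(\infty)}(1)$ not being a unit, which in the generic range $|S|\ge 2$ is seen by specializing at the trivial character of $G_\infty$ to get $L_{S,\Sigma}(1,0)=0$ (a classical consequence of the standard zero-order formula encapsulated in Lemma \ref{order of vanishing}), and in the remaining case $S=\{\frak p\}$ by choosing a nontrivial finite-order character of $G_\infty$ which is trivial on $G_{\frak p}$ at some intermediate level where $\frak p$ is not yet totally ramified. \textbf{The principal difficulty} I expect lies in the upgrade from right-exactness to short-exactness, i.e. in controlling the kernel of $\bar\Phi$: since $\Bbb Z_p[[G_\infty]]$ is not flat over $\Lambda$, the base change $-\otimes_\Lambda \Bbb Z_p[[G_\infty]]$ is only right exact a priori, and it is precisely the non-zero-divisor output of Proposition \ref{not a zero divisor prop}, combined with the determinantal/adjugate trick, that allows us to conclude injectivity from the Fitting-ideal identification given by Theorem \ref{limit theorem 2}(2).
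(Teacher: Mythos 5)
Your proposal is correct and follows essentially the same route as the paper: taking $\Gamma$-coinvariants of the length-one $\Lambda$-projective resolution \eqref{Lambda-ses} (the paper packages this as a snake-lemma argument on the diagram with vertical maps $\gamma-1$, which is the same thing), obtaining a right-exact presentation of $\Nabla_S^{(\infty)}$ by free $\Zp(\chi)[[P_\infty]]$-modules, and then using Theorem \ref{limit theorem 2}(2) together with Proposition \ref{not a zero divisor prop} and the adjugate-matrix trick to promote right-exactness to injectivity of $\overline{\Phi_\gamma^{(\infty)}}$; the torsion and $\mathrm{pd}=1$ claims then come from Lemma \ref{torsion lemma}. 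The one place you go beyond the paper is your closing discussion of why $\Nabla_S^{(\infty)}\ne 0$ (needed so that $\mathrm{pd}=1$ rather than $\le 1$), which the paper leaves implicit in its appeal to Lemma \ref{torsion lemma}; your sketch for the case $S=\{\frp\}$ is a bit loose (a nontrivial finite-order character trivial on $G_\frp$ need not exist when $\frp$ does not split in $H_\fre/k$), but this is extra care rather than a defect in the core argument, which matches the paper's.
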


\begin{proof} We will use the notations in the proof of Theorem \ref{limit theorem 1}. In particular, note that 
$$\Lambda^\chi/(1-\gamma)\simeq\Bbb Z_p(\chi)[[P_\infty]], $$
for all $\overline{\Bbb Q_p}$--valued characters $\chi$ of $\Delta$. Consequently, the exact sequence \eqref{Lambda-ses} leads to the following commutative diagram of $\Lambda$--modules.
\begin{equation*}
\xymatrix{
 &  &  & U_S^{(\infty)} \ar@{>->}[d] &  \\
0 \ar[r] &\bigoplus_\chi \left(\Lambda^\chi\right)^{m_\chi} \ar[r]^{\Phi_\gamma^{(\infty)}} \ar[d]^{\gamma-1} &  \bigoplus_\chi\left(\Lambda^\chi\right)^{m_\chi} \ar[r]^{\pi_\infty} \ar[d]^{\gamma-1} &  T_p(M_S^{(\infty)}) \ar[r] \ar[d]^{\gamma-1} & 0 \\
0 \ar[r] & \bigoplus_\chi\left(\Lambda^\chi\right)^{m_\chi} \ar[r]^{\Phi_\gamma^{(\infty)}} \ar@{>>}[d] &  \bigoplus_\chi\left(\Lambda^\chi\right)^{m_\chi}
\ar[r]^{\pi_\infty} \ar@{>>}[d] &  T_p(M_S^{(\infty)}) \ar[r] \ar@{>>}[d] & 0 \\
 & \bigoplus_\chi\Zp(\chi)[[P_\infty]]^{m_\chi}  & \bigoplus_\chi \Zp(\chi)[[P_\infty]]^{m_\chi}  & \Nabla_S^{(\infty)}  &  \\
}
\end{equation*} 
where the right vertical exact sequence is given by Lemma \ref{infty Tate seq lemma}.
The snake lemma applied to the diagram above gives the exact sequence of $\Bbb Z_p[[G_\infty]]$--modules
\begin{equation*}
\xymatrix{
\bigoplus_\chi\Zp(\chi)[[P_\infty]]^{m_\chi} \ar[r]^{\overline{\Phi_\gamma^{(\infty)}}} & \bigoplus_\chi\Zp(\chi)[[P_\infty]]^{m_\chi}  \ar[r] & \Nabla_S^{(\infty)} \ar[r] & 0,  \\
}
\end{equation*} 
where $\overline{\Phi_\gamma^{(\infty)}}:= {\Phi_\gamma^{(\infty)}} \mod (\gamma-1)$. It follows that
\[
\Fitt_{\Zp[[G_\infty]]} \left( \Nabla_S^{(\infty)} \right) = \det\nolimits_{\Zp[[G_\infty]]} \left( {\overline{\Phi_\gamma^{(\infty)}}} \right) \cdot \Zp[[G_\infty]].
\]
Combined with Theorem \ref{limit theorem 2}(2), the equality above shows that the element
$\det\nolimits_{\Zp[[G_\infty]]} \left( {\overline{\Phi_\gamma^{(\infty)}}} \right)$  differs from $\Theta_{S, \Sigma}^{(\infty)}(1)$
  by a unit in $\Bbb Z_p[[G_\infty]]$. By Propositon \ref{not a zero divisor prop}, the element $\Theta_{S,\Sigma}^{(\infty)}(1)$ is a non-zero divisor in
$\Bbb Z_p[[G_\infty]]$. Therefore, $\det\nolimits_{\Zp[[G_\infty]]} \left( {\overline{\Phi_\gamma^{(\infty)}}} \right)$ is a non-zero divisor in $\Bbb Z_p[[G_\infty]]$ as well.
By a standard argument (using the adjoint matrix $\chi$--componentwise, see proof of Cor. \ref{non-zero divisors}(2)) we see that  ${\overline{\Phi_\gamma^{(\infty)}}}$ is injective, hence the sequence of $\Bbb Z_p[[G_\infty]]$--modules
\begin{equation*}
\xymatrix{
0 \ar[r] & \bigoplus_\chi\Zp(\chi)[[P_\infty]]^{m_\chi}  \ar[r]^{\overline{\Phi_\gamma^{(\infty)}}} & \bigoplus_\chi\Zp(\chi)[[P_\infty]]^{m_\chi} \ar[r] & \Nabla_S^{(\infty)} \ar[r] & 0  \\
}
\end{equation*} 
is exact. Consequently, 
the $\Bbb Z_p[[G_\infty]]$--module $ \Nabla_S^{(\infty)}$ is finitely generated, of projective dimension at most one. Further, the fact that $ \Nabla_S^{(\infty)}$  is torsion and of projective dimension exactly $1$ as a $\Bbb Z_p[[G_\infty]]$--module follows immediately from Lemma \ref{torsion lemma}. 
\end{proof}

\section{Appendix ($p$--adic Ritter--Weiss modules and Tate sequences for small $S$)} Let $L$ be a finite,
separable extension of $\Bbb F_q(t)$. Denote by $Z$ a smooth, projective curve defined over $\Bbb F_q$, whose field of rational functions
is isomorphic to $L$. We let $\overline Z:=Z\times_{\Bbb F_q}\overline{\Bbb F_q}$, $\Gamma:=G(\overline{\Bbb F_q}/\Bbb F_q)$
and let $\gamma$ be the $q$--power arithmetic Frobenius automorphism, viewed as a canonical topological generator of $\Gamma$.
Note that $\overline Z$ may not be connected. Consequently, $\overline L:=L\otimes_{\Bbb F_q}\overline{\Bbb F_q}$ (the $\overline{\Bbb F_q}$--algebra of rational functions on $\overline Z$) could be a finite direct sum of isomorphic fields (the fields of rational functions of the connected components of $\overline Z$.)\\

Next, we consider a finite, non--empty set $S$ of closed points on $Z$ and let  $\overline S$ be the set of closed points on $\overline Z$ sitting above points in $S$. We let ${\rm Div}^0_{\overline S}(\overline L)$ (respectively ${\rm Div}_{\overline S}(\overline L)$) and ${\rm Div}^0_S(L)$ (respectively ${\rm Div}_S(L)$) denote the divisors of degree $0$ (respectively arbitrary degree) on $\overline Z$ and  $Z$, supported at $\overline S$ and $S$, respectively. Note that the degree of a divisor on $Z$, denoted by ${\rm deg}$, is computed relative to the field of definition $\Bbb F_q$. Also, the degree of a divisor on $\overline Z$ is in fact a multidegree, computed on each connected component on $\overline Z$ separately. Further, $X_S(L)$ denotes the $S$--supported divisors on $Z$ of arbitrary formal degree, denoted below ${\rm fdeg}$.
Also, $U_S(L)$ denotes the group of $S$--units inside $L^\times$ and 
\[
  {\rm Pic}^0_S(L):=\frac{{\rm Pic}^0(L)}{\widehat{{\rm Div}^0_S(L)}} = \frac{{\rm Div^0}(L)}{{\rm Div^0_S}(L) + {\rm div}(L^\times)},
\]
is the $S$--Picard group associated to $L$, obtained by taking the quotient of the usual Picard group ${\rm Pic}^0(L)$
by the subgroup ${\widehat{{\rm Div}^0_S(L)}}$ of classes of all  $S$--supported divisors of degree $0$.\\

Finally, we let $M_S$ denote the Picard $1$--motive associated as in \cite{GP12} to the data $(\overline Z, \overline{\Bbb F_q}, \overline S, \emptyset)$. As usual, $T_p(M_S)^\Gamma$ and $T_p(M_S)_{\Gamma}$ denote the $\Gamma$--invariants, respectively $\Gamma$--coinvariants of the $p$--adic Tate module of $M_S$. In what follows, if $N$ is a $\Bbb Z$--module, we let $N_p:=N\otimes_{\Bbb Z}\Bbb Z_p$.

\begin{definition}\label{p-large-definition} The set $S$ is called $p$--large if the following are satisfied.
\begin{enumerate}
\item
${\rm Pic}^0_S(L)_p=0$.
\item 
$S$ contains at least one place of degree (relative to $\Bbb F_q$) coprime to $p$. 
\end{enumerate}
\end{definition}

\begin{remark} It is easily seen that $S$ is $p$--large if and only if ${\rm Pic}_S(L)_p=0$, where 
$${\rm Pic}_S(L):=\frac{{\rm Div}(L)}{{\rm Div}_S(L) + {\rm div}(L^\times)}$$
is the quotient of the full Picard group ${\rm Pic}(L)$ of $L$ by its subgroup of $S$--supported divisor classes. This is perhaps a more natural definition, but we prefer to use the definition above because ${\rm Pic}^0(L)$ (as opposed to ${\rm Pic}(L)$) is much more naturally related to the $1$--motive $M_S$.
\end{remark}

The following result was obtained in \cite{GPff}. (See Proposition 1.1 in loc.cit.)

\begin{prop}[Greither--Popescu, \cite{GPff}]\label{largeS-proposition} If $S$ is $p$--large, then the following hold.
\begin{enumerate}
\item There is a canonical isomorphism $T_p(M_S)^{\Gamma}\simeq U_S(L)_p$.
\item There is a canonical isomorphism $T_p(M_S)_{\Gamma}\simeq X_S(L)_p$.
\end{enumerate}
\end{prop}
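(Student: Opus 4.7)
The plan is to deduce both isomorphisms from the six--term $\Gamma$--cohomology exact sequence attached to the canonical short exact sequence of $T_p(M_S)$. Recall from \cite[\S 2]{GP12} that $M_S$ is the two--term complex $[{\rm Div}^0_{\overline S}(\overline L) \to \Pic^0(\overline L)]$, whose $p$--adic realization fits into a canonical short exact sequence of $\Zp[[\Gamma]]$--modules
\[
0 \to T_p(\Pic^0(\overline L)) \to T_p(M_S) \to C \to 0, \qquad C := {\rm Div}^0_{\overline S}(\overline L) \otimes_{\Bbb Z} \Zp.
\]
Since $\Gamma \simeq \widehat{\Bbb Z}$ has cohomological dimension one, applying $\Gamma$--cohomology yields the exact sequence
\[
0 \to T_p(\Pic^0)^\Gamma \to T_p(M_S)^\Gamma \to C^\Gamma \stackrel{\partial}{\to} T_p(\Pic^0)_\Gamma \to T_p(M_S)_\Gamma \to C_\Gamma \to 0.
\]

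I would then identify the outer terms. Writing $J$ for the Jacobian of $\overline Z$, we have $\Pic^0(\overline L) = J(\overline{\Fq})$ and $J(\Fq) = \Pic^0(L)$ is finite. Since the $p$--adic Tate module of a finite group vanishes, $T_p(J)^\Gamma = T_p(J(\Fq)) = 0$. For the coinvariants, applying $\Gamma$--cohomology to the sequences $0 \to J[p^n] \to J \stackrel{p^n}{\to} J \to 0$, using Lang's theorem $H^1(\Gamma, J(\overline{\Fq})) = 0$, and passing to the inverse limit (Mittag--Leffler applies because $p\colon J[p^{n+1}] \to J[p^n]$ is surjective) yields $T_p(J)_\Gamma \simeq \Pic^0(L)_p$. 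On the divisor side, the multi--degree exact sequence $0 \to {\rm Div}^0_{\overline S}(\overline L) \to {\rm Div}_{\overline S}(\overline L) \to \Bbb Z^c \to 0$ (with $c$ the number of connected components of $\overline Z$), the permutation--module structure of ${\rm Div}_{\overline S}(\overline L)$, and flatness of $\Zp$ over $\Bbb Z$, yield $C^\Gamma \simeq {\rm Div}^0_S(L)_p$ on the invariants side together with a canonical short exact sequence
\[
0 \to \Zp/(d_S/c)\Zp \to C_\Gamma \to X_S(L)_p \to 0
\]
on the coinvariants side, where $d_S := \gcd_{v \in S} d_v$ and $c \mid d_v$ for all $v \in S$. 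The second condition of $p$--largeness forces some $d_v$, and hence $d_S$ and therefore $d_S/c$, to be coprime to $p$, so the leftmost term vanishes and $C_\Gamma \simeq X_S(L)_p$.

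Substituting these identifications reduces the six--term sequence to
\[
0 \to T_p(M_S)^\Gamma \to {\rm Div}^0_S(L)_p \stackrel{\partial}{\to} \Pic^0(L)_p \to T_p(M_S)_\Gamma \to X_S(L)_p \to 0,
\]
in which $\partial$ is the natural divisor--class map. Its cokernel is precisely $\Pic^0_S(L)_p$, which vanishes by the first condition of $p$--largeness; this immediately produces (2), namely $T_p(M_S)_\Gamma \simeq X_S(L)_p$. Its kernel consists of principal $S$--supported divisors of degree zero; from the standard sequence $1 \to \Fq^\times \to U_S(L) \stackrel{\div}{\to} \ker(\partial) \to 0$ and the vanishing $\Fq^\times \otimes \Zp = 0$ (valid because $q-1$ is coprime to $p$ in characteristic $p$), one deduces $T_p(M_S)^\Gamma \simeq \ker(\partial) \simeq U_S(L)_p$, which is (1). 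The main technical hurdle in carrying this out is the careful identification of $C^\Gamma$ and $C_\Gamma$ via the multi--degree sequence: reconciling the arithmetic degree (which appears on the invariants side as ${\rm Div}^0_S$) with the formal degree (which appears on the coinvariants side as $X_S$) and producing the finite cyclic correction $\Zp/(d_S/c)\Zp$ is precisely where the second $p$--largeness condition enters decisively to ensure that $C_\Gamma \simeq X_S(L)_p$ canonically.
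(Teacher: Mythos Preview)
Your approach is correct and matches the paper's method. Note that the paper does not actually prove Proposition~\ref{largeS-proposition} but cites it from \cite{GPff}; however, the paper does sketch a proof of the generalization Proposition~\ref{smallS-proposition}, and that sketch proceeds exactly as you do: start from the canonical sequence $0\to T_p(J)\to T_p(M_S)\to {\rm Div}^0_{\overline S}(\overline L)_p\to 0$, take $\Gamma$--invariants and coinvariants, identify $T_p(J)^\Gamma=0$ and $T_p(J)_\Gamma\simeq\Pic^0(L)_p$, identify the connecting map with the divisor--class map, and read off the result. Two minor points: the kernel of $\div$ on $U_S(L)$ is the full group of constants of $L$ (i.e.\ $\Bbb F_{q^c}^\times$ rather than $\Bbb F_q^\times$), though this is still prime to $p$ so your conclusion stands; and your careful bookkeeping with the multidegree yields the correction term $\Zp/(d_S/c)\Zp$, which the paper writes as $\Zp/d_S\Zp$ using the total $\Bbb F_q$--degree --- under the $p$--largeness hypothesis both vanish, so the discrepancy is immaterial here.
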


\begin{remark} In fact, in \cite{GPff}, the authors describe the modules $T_p(M_{S, T})^\Gamma$ and $T_p(M_{S, T})_\Gamma$, where $M_{S, T}$ is the Picard $1$--motive associated to $(\overline Z, \overline{\Bbb F_q}, \overline S, \overline T)$, where $T$ is a finite, non--empty set of closed points on $Z$, disjoint from $S$. However, by \cite[Rem.~2.7]{GP12} and the proof of \cite[Lemma 3.2]{GPff}, we have for any such $T$ equalities
\[
    T_p(M_{S, T})=T_p(M_S), \qquad U_{S, T}(L)_p=U_S(L)_p,
\]
where $U_{S,T}(L)$ is the group of $S$--units in $L^\times$, congruent to $1$ modulo all primes in $T$.
\end{remark}

The goal of this Appendix is to remove the hypothesis ``$S$ is $p$--large'' in the Proposition above. More precisely,
we sketch the proof of the following.
\begin{prop}\label{smallS-proposition} With notations as above, the following hold for all sets $S$.
\begin{enumerate}
\item There is a canonical isomorphism $T_p(M_S)^{\Gamma}\simeq U_S(L)_p$.
\item There are canonical exact sequences of $\Bbb Z_p$--modules:
  \begin{eqnarray*}
    && 0\to {\rm Pic}^0_S(L)_p\to T_p(M_S)_{\Gamma}\to \widetilde{X_S}(L)\to 0, \\
    && 0\to \Bbb Z_p/d_S\Zp \to \widetilde{X_S}(L)\to X_S(L)_p\to 0,
  \end{eqnarray*}
where $d_S\Bbb Z={\rm deg}({\rm Div}_S(L))$ and $\widetilde X_S(L):= ({\rm Div}^0_{\overline S}(\overline L)_p)_\Gamma$.  In particular, if $S$ contains a prime of degree not divisible by $p$, then $\widetilde{X_S}(L)=X_S(L)_p$.
\end{enumerate}
\end{prop}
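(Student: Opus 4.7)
The plan is to mimic the cohomological argument of \cite{GPff} establishing the $p$--large case (Proposition \ref{largeS-proposition}), carefully tracking the Picard and degree contributions that were forced to vanish under the $p$--largeness hypothesis. The central input is the canonical short exact sequence of $\Zp[\Gamma]$--modules
$$
0 \to T_p({\rm Pic}^0(\overline Z)) \to T_p(M_S) \to {\rm Div}^0_{\overline S}(\overline L) \otimes_{\Bbb Z} \Zp \to 0,
$$
arising from the definition of $M_S$ as a Picard $1$--motive $[\,{\rm Div}^0_{\overline S}(\overline L) \to {\rm Pic}^0(\overline Z)\,]$. Applying the $\Gamma$--invariants and $\Gamma$--coinvariants functors, and using that $\mathrm{cd}_p(\Gamma) \le 1$, produces a six--term exact sequence. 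The Jacobian endpoints are identified by standard tools: by the Weil conjectures, $T_p({\rm Pic}^0(\overline Z))^\Gamma = 0$ (Frobenius eigenvalues have absolute value $\sqrt q \ne 1$), and the combination of Lang's theorem with the Kummer sequence gives a canonical isomorphism $T_p({\rm Pic}^0(\overline Z))_\Gamma \simeq {\rm Pic}^0(L)_p$. A direct orbit analysis for the permutation $\Gamma$--module ${\rm Div}^0_{\overline S}(\overline L)$ yields $({\rm Div}^0_{\overline S}(\overline L) \otimes \Zp)^\Gamma \simeq {\rm Div}^0_S(L)_p$, while its $\Gamma$--coinvariants are by definition $\widetilde X_S(L)$. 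The six--term sequence thus collapses to
$$
0 \to T_p(M_S)^\Gamma \to {\rm Div}^0_S(L)_p \xrightarrow{\alpha} {\rm Pic}^0(L)_p \to T_p(M_S)_\Gamma \to \widetilde X_S(L) \to 0,
$$
where $\alpha$ is the divisor--class map tensored with $\Zp$.

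To conclude part (1) and the first exact sequence of part (2), I would invoke the classical four--term exact sequence of abelian groups
$$
0 \to \Bbb F_{q^e}^\times \to U_S(L) \xrightarrow{\mathrm{div}} {\rm Div}^0_S(L) \xrightarrow{\alpha_0} {\rm Pic}^0(L) \to {\rm Pic}^0_S(L) \to 0,
$$
where $\Bbb F_{q^e}$ is the exact field of constants of $L$. Since $|\Bbb F_{q^e}^\times| = q^e - 1$ is coprime to the characteristic $p$, tensoring with $\Zp$ preserves exactness and annihilates the leftmost term, yielding $\ker\alpha \simeq U_S(L)_p$ and $\mathrm{coker}\,\alpha \simeq {\rm Pic}^0_S(L)_p$. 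Reading these off the five--term sequence displayed above simultaneously delivers part (1) and the short exact sequence $0 \to {\rm Pic}^0_S(L)_p \to T_p(M_S)_\Gamma \to \widetilde X_S(L) \to 0$.

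For the remaining short exact sequence describing $\widetilde X_S(L)$, I would apply $\Gamma$--cohomology to
$$
0 \to {\rm Div}^0_{\overline S}(\overline L) \to {\rm Div}_{\overline S}(\overline L) \xrightarrow{\deg} Q \to 0
$$
(tensored with $\Zp$), where $Q$ is the cokernel of the degree map. The permutation $\Gamma$--module ${\rm Div}_{\overline S}(\overline L)_p$ has both its invariants and its coinvariants canonically identified with ${\rm Div}_S(L)_p$; the induced degree map on invariants has image $d_S\Zp$ and kernel ${\rm Div}^0_S(L)_p$, while the induced map on coinvariants is the formal degree, whose kernel is $X_S(L)_p$. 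Plugging these identifications into the six--term $\Gamma$--cohomology sequence produces the required $0 \to \Zp/d_S\Zp \to \widetilde X_S(L) \to X_S(L)_p \to 0$. The main technical obstacle throughout the argument is the careful bookkeeping of the several degree conventions (actual vs.\ formal degree on $Z$, and the degree on $\overline Z$) and the verification that each connecting map appearing in the two six--term sequences is indeed the natural classical map; once this is done, both short exact sequences of (2) fall out directly.
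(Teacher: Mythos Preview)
Your proposal is correct and follows essentially the same approach as the paper: both arguments take $\Gamma$--invariants and coinvariants of the canonical short exact sequence $0\to T_p(J)\to T_p(M_S)\to {\rm Div}^0_{\overline S}(\overline L)_p\to 0$, identify the Jacobian endpoints as $0$ and ${\rm Pic}^0(L)_p$, recognize the connecting map as the divisor--class map to read off $U_S(L)_p$ and ${\rm Pic}^0_S(L)_p$, and separately compute $\widetilde X_S(L)$ via $\Gamma$--cohomology of the degree sequence on $\overline S$--supported divisors. The only differences are cosmetic: you treat the two six--term sequences in the opposite order and justify $T_p(J)^\Gamma=0$ via Frobenius weights rather than via finiteness of ${\rm Pic}^0(L)$ together with $\Zp$--freeness of $T_p(J)$.
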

\begin{proof} (Sketch) We will give only a brief sketch of the proof, as the techniques and main ideas are borrowed from \cite{GPff}. First, we consider the exact sequence of $\Bbb Z_p[[\Gamma]]$--modules
$$0\to {\rm Div}^0_{\overline S}(\overline L)_p\to {\rm Div}_{\overline S}(\overline L)_p\overset{\rm deg}\longrightarrow \Bbb Z_p\to 0$$
and take $\Gamma$--invariants and $\Gamma$--coinvariants to obtain a long exact sequence
$$0\to {\rm Div}^0_{S}(L)_p\to {\rm Div}_{S}(L)_p\overset{\rm deg}\longrightarrow \Bbb Z_p\to ({\rm Div}^0_{\overline S}(\overline L)_p)_\Gamma\to  {\rm Div}_{S}(L)_p\overset{\rm fdeg}\longrightarrow \Bbb Z_p\to 0.$$
The fact that the $\Gamma$--invariant of the complex $[{\rm Div}_{\overline S}(\overline L)_p\overset{\rm deg}\longrightarrow \Bbb Z_p]$ is $[{\rm Div}_{S}(L)_p\overset{\rm deg}\longrightarrow \Bbb Z_p]$ and its $\Gamma$--coinvariant is 
$[{\rm Div}_{S}(L)_p\overset{\rm fdeg}\longrightarrow \Bbb Z_p]$ follows immediately from the definitions and is explained in \S2 of \cite{GPff}. Now, in the long exact sequence above, we have
$${\rm ker}({\rm fdeg})=X_S(L)_p, \qquad {\rm coker}({\rm deg})=\Bbb Z_p/d_S\Zp.$$
Therefore, if we let $\widetilde X_S(L):= ({\rm Div}^0_{\overline S}(\overline L)_p)_\Gamma$, we have a canonical exact sequence
\begin{equation}\label{tilde-x-ses}
0\to \Bbb Z_p/d_S\Zp \to \widetilde{X_S}(L)\to X_S(L)_p\to 0.
\end{equation}

If $J$ denotes the Jacobian of $\overline Z$, there is a canonical exact sequence of $\Bbb Z_p$--modules 
$$0\to T_p(J)\to T_p(M_S)\to {\rm Div}^0_{\overline S}(\overline L)_p\to 0.$$
(See \S2 of \cite{GP12} for the exact sequence above.) Since we have a canonical isomorphism 
$$T_p(J)_\Gamma\simeq {\rm Pic}^0(L)_p$$
 (see Corollary 5.7 in \cite{GP12}) and $T_p(J)$ is $\Bbb Z_p$--free of finite rank, we also have 
 $$T_p(J)^{\Gamma}=0.$$
 Therefore, when taking $\Gamma$--invariants and $\Gamma$--coinvariants in the above exact sequence, we obtain a canonical long exact sequence of $\Bbb Z_p$--modules
$$0\to T_p(M_S)^\Gamma\to {\rm Div}^0_{S}(L)_p\overset{\delta}\longrightarrow {\rm Pic}^0(L)_p\to T_p(M_S)_\Gamma\to \widetilde X_S(L)\to 0,$$
where the connecting morphism $\delta$ is the usual divisor--class map. (See \cite[\S1]{GPff} for this fact.) Since there is a canonical isomorphism $U_S(L)_p\simeq {\rm ker}(\delta)$, where $U_S(L)_p$ injects into ${\rm Div}^0_{S}(L)_p$ via the divisor map, we obtain a canonical isomorphism of $\Bbb Z_p$--modules
$$T_p(M_S)^\Gamma\simeq U_S(L)_p,$$
which concludes the proof of part (1) of the Proposition.

To conclude the proof of part (2), observe that, by definition, we have ${\rm coker}(\delta)={\rm Pic}^0_S(L)_p$. Therefore, the last four non--zero terms of the long exact sequence above lead to a canonical short exact sequence of $\Bbb Z_p$--modules
\begin{equation}\label{coinvariant-ses} 0\to {\rm Pic}^0_S(L)_p\to T_p(M_S)_{\Gamma}\to \widetilde{X_S}(L)\to 0.\end{equation}
In combination with \eqref{tilde-x-ses}, this concludes the proof of part (2). 
\end{proof}

\begin{remark}\label{RW-Tate-remark} (Ritter-Weiss modules and Tate sequences.) Assume that $L$ is the top field in a finite, Galois extension $L/K$, of Galois group $G$ and that $\Bbb F_q(t)\subseteq K\subseteq L$. Further, assume that the set $S$ is $G$--equivariant.
Then, all the $\Bbb Z_p$--modules  involved in the proof of the above Proposition carry natural $\Bbb Z_p[G]$--module structures. Most importantly, due to their canonical constructions, all the exact sequences above are exact in the category of $\Bbb Z_p[G]$--modules. 

Exact sequence \eqref{coinvariant-ses} is the $p$--adic, function field analogue of the Ritter--Weiss exact sequence (see \cite{Ritter-Weiss}), defining a certain extension class $\Nabla_S$ of a module of $S$--divisors by an $S$--ideal class group, in the number field setting. This is what prompts the notation $\Nabla_S(L)_p:=T_p(M_S)_{\Gamma}$. 

Further, since $T_p(M_S)$ is $\Bbb Z_p[G]$--projective, the exact sequence of $\Bbb Z_p[G]$--modules
\begin{equation}\label{Tate-seq}0\to U_S(L)_p\to T_p(M_S)\overset{1-\gamma}\longrightarrow T_p(M_S)\to \Nabla_S(L)_p\to 0,\end{equation}
is the $p$--adic, function field analogue of a Tate exact sequence (see \cite{GPff} and also \cite{Ritter-Weiss} for more details), in the case where $S$ is not necessarily $p$--large. 

Of course, in order to cement these analogies, one would have to compute the extension classes of \eqref{coinvariant-ses} an \eqref{Tate-seq} in ${\rm Ext}^1_{\Bbb Z_p[G]}(\widetilde X_S(L), {\rm Pic}^0_S(L)_p)$ and 
${\rm Ext}^2_{\Bbb Z_p[G]}(\Nabla_S(L)_p, U_S(L)_p)$  and show that they coincide with the class--field theoretically meaningful Ritter--Weiss and Tate classes, respectively. In \cite{GPff}, this was done $\ell$--adically, for $\ell\ne p$, for the exact sequence \eqref{Tate-seq}, in the case where $S$ is $\ell$--large. (See Theorem 2.2 in loc.cit.) A proof of the $p$--adic analogue of that theorem (even in the case where $S$ is $p$--large) is still missing in the literature, unless $|G|$ is not divisible by $p$, in which case this was proved in \cite{GPff}. (See Theorem 2.2. in loc.cit.)
\end{remark}

\bibliographystyle{abbrv}
\bibliography{ref_bp}

\footnotesize

\bigskip

  Werner Bley, \textsc{Department of Mathematics, Ludwig-Maximilians-Universit\"at M\"unchen}\par\nopagebreak
  \textit{E-mail address: } \texttt{bley@math.lmu.de}

\medskip

  Cristian D.  Popescu, \textsc{Department of Mathematics, University of California San Diego}\par\nopagebreak
  \textit{E-mail address: }\texttt{cpopescu@math.ucsd.edu}

\end{document}